\documentclass[12pt]{article}

\usepackage{amsthm}

\usepackage[left=2cm,right=2cm,top=2cm,bottom=2cm,bindingoffset=0cm]{geometry}

\usepackage[T2A]{fontenc}
\usepackage[utf8]{inputenc}
\usepackage[english]{babel}
\usepackage{amsmath}
\usepackage{amsfonts}
\usepackage{mathtools}
\usepackage{graphicx}
\usepackage{systeme}
\usepackage{tikz}
\usepackage{blindtext}
\usepackage{amssymb}
\usepackage{hyperref}
\usepackage{bbm}
\usepackage{cite}
\usepackage[normalem]{ulem}
\graphicspath{ {./} }

\newtheorem{theorem}{Theorem}[section]
\newtheorem{corollary}{Corollary}[theorem]
\newtheorem{lemma}{Lemma}[section]
\newtheorem*{lemma*}{Lemma}
\newtheorem*{den*}{Denotion}

\newtheorem*{remark*}{Remark}
\newtheorem{construction}{Construction}

\newtheorem{conjecture}{Conjecture}

\theoremstyle{remark}
\newtheorem*{remark}{Remark}

\theoremstyle{definition}
\newtheorem{definition}{Definition}[section]

\title{Spectrum of Johnson graphs\footnote{This work is done under the financial support of the grant of the president of the Russian Federation no. Nsh-775.2022.1.1 and the grant of Theoretical Physics and Mathematics Advancement Foundation “BASIS”.}}
\author{Mikhail Koshelev\footnote{Moscow State University, Moscow Institute of Physics and Technology, mkoshelev99@gmail.com}}
\date{}

\begin{document}

\maketitle

\begin{abstract}
    In this paper we prove new bounds on the second eigenvalue of Johnson graphs. We then apply these bounds to obtain new results on the modularity of Johnson graphs and their random subgraphs, hamiltonicity of Johnson graphs and thresholds of the appearance of the hamilton cycles and giant components. Futhermore, we provide general bounds on vertex connectivity and the stability of the modularity for $(n, d, \lambda)$-graphs.
    
    {\bf Keywords:} spectrum, Johnson graphs, modularity, hamiltonicity.
\end{abstract}

\section{Introduction}

In this paper we mainly work with Johnson graphs, also known as $G(n, r, s)$ graphs. The vertices of such graphs are $n$-dimensional vectors $v \in \{0,1\}^n$ such that $\|v\|^2 = r$. An edge is drawn between two vertices if and only if the inner product of the corresponding vectors is equal to $s$.

Johnson graphs have found their applications in various topics of combinatorial geometry, Ramsey theory, coding theory and other branches of modern combinatorics. There exists a vast literature on such graphs, see, for example,~\cite{Codes, KK, FrW, LS, RaiMeta}.

The property we consider in this paper is the spectrum of such graphs. It has received a lot of attention in the past 50 years. It appears that one can say a lot about a graph just by looking at its eigenvalues. Before we continue, let us give some definitions.

\begin{definition}
    Let $G$ be a $d$-regular graph on $n$ vertices. Let $A$ be the adjacency matrix of $G$. We then call the sequence $\lambda_1 \geq \lambda_2 \dotsc \geq \lambda_n$ of the eigenvalues of $A$ the spectrum of $G$. We denote the degree of a vertex in a $d$-regular graph $G$ as $d(G)$.
\end{definition}

\begin{definition}
Let $G$ be a $d$-regular graph on $n$ vertices. Then 
    $$
        \lambda(G) := \max\{\lambda_2, -\lambda_n\} = \max\{|\lambda_2|, \dotsc, |\lambda_n|\}.
    $$
\end{definition}

The following theorem about the spectrum of $G(n, r, s)$ graphs is known (see \cite{Spectrum}, theorem 4.6).

\begin{theorem}[\cite{Spectrum}]
\label{Eigenvalues}
The set of eigenvalues of $G(n, r, s)$ coincides with the set $\{E_{r - s}(i), 0 \leq i \leq r\}$, where 
\begin{multline*}
    E_k(i) = \sum\limits_{j=0}^{k}{(-1)^{k-j}\binom{r-j}{k-j}\binom{r-i}{j}\binom{n-r+j-i}{j}} = \sum\limits_{j = 0}^{k}{(-1)^{j}\binom{i}{j}\binom{r - i}{k - j}\binom{n - r - i}{k - j}}.
\end{multline*}
Moreover, the multiplicity of $ E_{r - s}(i)$ is equal to $\binom{n}{i} - \binom{n}{i - 1}$. Note that in these formulas some of the binomial coefficients $\binom{n}{k}$ might have either $k < 0$ or $k > n$. In this case we assume them to be equal to zero.
\end{theorem}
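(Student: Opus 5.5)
The plan is to use the standard association-scheme decomposition of $\mathbb{R}^{\binom{[n]}{r}}$. I identify vertices with $r$-subsets of $[n]$, so that adjacency means $|A\cap B| = s$, i.e.\ $|A\setminus B| = k := r-s$. The adjacency matrix $A_k$ of $G(n,r,s)$ is one of the distance matrices of the Johnson scheme $J(n,r)$, and it commutes with the action of $S_n$. I will assume $r \le n/2$, reducing the case $r>n/2$ by complementation, and also that $i\le n/2$ wherever the multiplicity formula is used.

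\textbf{Step 1 (eigenspaces).} Let $W_{i,r}\colon \mathbb{R}^{\binom{[n]}{i}}\to\mathbb{R}^{\binom{[n]}{r}}$ be the inclusion map $(W_{i,r}f)(A)=\sum_{I\subset A,|I|=i} f(I)$. Put $V_i = \operatorname{im} W_{i,r}$, so that $V_0\subset V_1\subset\dots\subset V_r$. Set $U_i = V_i\cap V_{i-1}^{\perp}$. By the classical result that $W_{i,r}$ is injective for $i\le r\le n-i$ (Gottlieb--Kantor), $\dim V_i=\binom ni$, and hence $\dim U_i=\binom ni-\binom n{i-1}$. These $U_i$ are the irreducible $S_n$-modules $S^{(n-i,i)}$. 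They are pairwise non-isomorphic and each occurs with multiplicity one in the permutation module. By Schur's lemma, every $S_n$-equivariant operator, in particular $A_k$, acts on $U_i$ as a scalar. This gives the multiplicity claim, once the scalars are computed. Because $\mathbb{R}^{\binom{[n]}{r}}=\bigoplus_i U_i$, these scalars exhaust the spectrum.

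\textbf{Step 2 (computing the scalar).} To compute the scalar I will evaluate $A_k$ on one explicit vector of $U_i$. Fix disjoint pairs $\{a_1,b_1\},\dots,\{a_i,b_i\}$ and take
\[
f(A)=\prod_{t=1}^{i}\bigl(\mathbbm{1}[a_t\in A]-\mathbbm{1}[b_t\in A]\bigr).
\]
This vector lies in $V_i$, since expanding the product gives $W_{i,r}$ applied to a signed indicator of transversal $i$-sets. It is orthogonal to $V_{i-1}$, since it is antisymmetric under each swap $a_t\leftrightarrow b_t$. Choose $A$ with $a_1,\dots,a_i\in A$ and all $b_t\notin A$, so $f(A)=1$.

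Now compute $(A_kf)(A)=\sum_{|B\setminus A|=k,\,|A\setminus B|=k} f(B)$. Only those $B$ containing exactly one element of each pair contribute. Suppose $B$ swaps $j$ of the pairs, contributing sign $(-1)^j$ and using $j$ removals and $j$ additions. The remaining $k-j$ removals come from the $r-i$ elements of $A$ outside the pairs, and the remaining $k-j$ additions come from the $n-r-i$ elements outside $A$ and outside the pairs. Summing gives
\[
\sum_j(-1)^j\binom ij\binom{r-i}{k-j}\binom{n-r-i}{k-j}=E_k(i),
\]
which is the second form in the statement.

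\textbf{Step 3 (the alternative form).} The first form in the statement should follow by a binomial identity, which I expect to be the main fiddly step. I would try a generating-function argument or Vandermonde-type inversion. Alternatively, I would rederive the first form by computing $A_k$ through the inclusion matrices, using $W_{j,r}W_{j,r}^T$ and the identity $A_k=\sum_j(-1)^{k-j}\binom{r-j}{k-j}W_{j,r}W_{j,r}^T$-type inclusion–exclusion. Here $W_{j,r}W_{j,r}^T$ acts on $U_i$ by $\binom{r-i}{j-i}\binom{n-r+j-i}{j}$-like scalars, giving the first form directly.

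Since the theorem is quoted from \cite{Spectrum}, it suffices to present Steps 1–2 carefully and cite the identity.
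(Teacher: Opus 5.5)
The paper gives no proof of this statement; it is quoted from \cite{Spectrum}. So your argument is a self-contained replacement, not a variant of anything in the text.

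\textbf{What is correct.} Steps 1--2 are correct. Gottlieb--Kantor together with Young's rule gives the multiplicity-free decomposition into the $U_i$, with $\dim U_i=\binom ni-\binom n{i-1}$. Your antisymmetric test vector then reads off the eigenvalue in the second form with no binomial manipulation. In the orthogonality claim, say explicitly that an $(i-1)$-set $J$ misses some pair $\{a_t,b_t\}$; that swap then fixes $\{B : B\supseteq J\}$ and negates $f$.

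Your restriction to $r\le n/2$ is genuinely needed, not a convenience. Complementation gives the spectrum indexed by $0\le i\le n-r$, and for $r>n/2$ the literal statement fails. For example, $G(6,5,4)\cong K_6$, yet the stated zero convention gives $E_1(2)=0$ with ``multiplicity'' $9$. Likewise, ``multiplicity of $E_{r-s}(i)$'' must be read as $\dim U_i$, because different $i$ can give equal values (in $G(4s,2s,s)$ every odd $i$ gives $0$). Your Schur-lemma formulation handles this correctly.

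\textbf{The gap: Step 3.} The equality of the two displayed sums is part of the statement, and you do not prove it. The route you sketch also fails as written. Since $(W_{t,r}W_{t,r}^{T})_{A,B}=\binom{|A\cap B|}{t}$, the coefficients $(-1)^{k-j}\binom{r-j}{k-j}$ belong with $t=r-j$, not $t=j$. The scalar you guess is wrong for either choice. At $j=i=0$ it gives $1$, while $W_{0,r}W_{0,r}^{T}$ is the all-ones matrix, with eigenvalue $\binom nr$ on $U_0$.

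\textbf{How to close it.} The fix is short. Write $A_m$ for the matrix of the relation $|A\setminus B|=m$. Then
\[
W_{r-j,r}W_{r-j,r}^{T}=\sum_{m=0}^{j}\binom{r-m}{j-m}A_m
\quad\Longrightarrow\quad
A_k=\sum_{j=0}^{k}(-1)^{k-j}\binom{r-j}{k-j}W_{r-j,r}W_{r-j,r}^{T}.
\]
The inversion follows from $\binom{r-j}{k-j}\binom{r-m}{j-m}=\binom{r-m}{k-m}\binom{k-m}{j-m}$ and $\sum_j(-1)^{k-j}\binom{k-m}{j-m}=\delta_{km}$.

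Now evaluate with your own test vector:
\[
(W_{t,r}W_{t,r}^{T}f)(A)=\sum_{T\subseteq A,\,|T|=t}\ \sum_{B\supseteq T}f(B).
\]
If $T$ misses some $a_u$, then $T\cap\{a_u,b_u\}=\emptyset$, and the inner sum vanishes by the same swap argument. If $T\supseteq\{a_1,\dots,a_i\}$, the inner sum counts the $r$-sets $B\supseteq T$ avoiding $b_1,\dots,b_i$, so it equals $\binom{n-t-i}{r-t}$.

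Hence $W_{t,r}W_{t,r}^{T}$ acts on $U_i$ by $\binom{r-i}{t-i}\binom{n-t-i}{r-t}$. For $t=r-j$ this is $\binom{r-i}{j}\binom{n-r+j-i}{j}$. Substituting into the expansion of $A_k$ gives exactly the first form, which completes the proof.
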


Unfortunately, little is known about the behaviour of $\lambda(G(n, r, s))$. We are only aware of two results in this area. The first one is due to Lov\'asz and covers the case $s = 0$.

\begin{theorem}[\cite{LovLambda}]
\label{LovT}
$\lambda(G(n, r, 0)) = \frac{r}{n - r}d(G(n, r, 0))$.
\end{theorem}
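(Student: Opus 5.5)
We work directly from Theorem~\ref{Eigenvalues} with $s=0$, i.e.\ $k=r$. Here $G(n,r,0)$ is the Kneser graph, and we take $n \geq 2r$, since otherwise the graph has no edges. The second form of the formula then simplifies drastically. We have $\binom{r-i}{r-j}$, which is nonzero only if $r-j\le r-i$, i.e.\ $j\ge i$. Since also $j\le i$ is forced by $\binom{i}{j}$, only $j=i$ survives. Hence
\[
E_r(i)=(-1)^i\binom{n-r-i}{r-i},\qquad 0\le i\le r .
\]
In particular $E_r(0)=\binom{n-r}{r}=d(G(n,r,0))$. This is the degree, so the remaining eigenvalues are $E_r(i)$ for $1\le i\le r$. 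Each of them actually occurs, since its multiplicity $\binom{n}{i}-\binom{n}{i-1}$ is positive for $i\le r\le n/2$.

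Next we show that $|E_r(i)|=\binom{n-r-i}{r-i}$ is decreasing in $i$. Passing from $i$ to $i+1$ multiplies this quantity by
\[
\frac{\binom{n-r-i-1}{r-i-1}}{\binom{n-r-i}{r-i}}=\frac{r-i}{n-r-i}\le 1,
\]
because $n\ge 2r$. Therefore $\lambda(G)=\max_{1\le i\le r}|E_r(i)|$ is attained at $i=1$. This gives
\[
\lambda(G(n,r,0))=\binom{n-r-1}{r-1}=\frac{r}{n-r}\binom{n-r}{r}=\frac{r}{n-r}\,d(G(n,r,0)).
\]

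The main point needing care is the reduction of the alternating sum to a single term. This relies on the convention in Theorem~\ref{Eigenvalues} that out-of-range binomials vanish, and we should check the edge cases $i=r$ and $n=2r$ under it. For $n=2r$ all the values $|E_r(i)|$ equal $1$, so $\lambda=1=\frac{r}{r}\cdot 1$, and the claimed formula still holds. Beyond these checks the argument is a short computation.
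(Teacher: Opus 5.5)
Your proof is correct. Setting $s=0$ in Theorem~\ref{Eigenvalues} reduces the sum to the Kneser spectrum $E_r(i)=(-1)^i\binom{n-r-i}{r-i}$, each value with positive multiplicity when $n\ge 2r$, and the ratio $\frac{r-i}{n-r-i}\le 1$ then places the maximum of $|E_r(i)|$ over $i\ge 1$ at $i=1$. For $r<n<2r$ both sides are simply $0$, so excluding that range costs nothing.

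The paper gives no proof of its own and only cites Lov\'asz, whose argument rests on this same explicit spectrum. Your route is therefore the standard one, made self-contained via Theorem~\ref{Eigenvalues}.
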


The second one was introduced in \cite{Brouwer} and works for a wider range of parameters.

\begin{theorem}[\cite{Brouwer}]
\label{BrT}
Let $(r - s)(n - 1) \geq r(n - r)$. Then $$\lambda(G(n, r, s)) = |E_{r - s}(1)| = \frac{|sn - r^2|}{r(n - r)}d(G(n, r, s)).$$
\end{theorem}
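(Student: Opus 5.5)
By Theorem~\ref{Eigenvalues}, the eigenvalues of $G(n,r,s)$ are $E_k(i)$ with $k=r-s$ and $0\le i\le r$; here $E_k(0)=\binom{r}{k}\binom{n-r}{k}=d$ is the degree. The plan is to prove two things under the hypothesis $k(n-1)\ge r(n-r)$. First, we compute $E_k(1)$ explicitly. Second, we show that $|E_k(i)|\le |E_k(1)|$ for all $1\le i\le r$. Together these give $\lambda = |E_k(1)|$.

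\textbf{Computing $E_k(1)$.} Use the second formula with $i=1$:
$$E_k(1)=\binom{r-1}{k}\binom{n-r-1}{k}-\binom{r-1}{k-1}\binom{n-r-1}{k-1}.$$
Divide by $d=\binom{r}{k}\binom{n-r}{k}$. Since $\binom{r-1}{k}/\binom rk=(r-k)/r$ and $\binom{r-1}{k-1}/\binom rk=k/r$, and similarly for $n-r$, this gives
$$\frac{E_k(1)}{d}=\frac{(r-k)(n-r-k)-k^2}{r(n-r)}.$$
Substituting $k=r-s$, the numerator becomes $s(n-2r+s)-(r-s)^2=sn-r^2$. This yields the claimed value $\frac{|sn-r^2|}{r(n-r)}d$.

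\textbf{Bounding the other eigenvalues.} A trace argument alone will not work, so I would use a monotonicity argument in $i$. The eigenvalues $E_k(i)$ are Eberlein (dual Hahn) polynomials evaluated at $x=i$, and they satisfy a three-term recurrence in $i$. I would instead exploit the sum identity
$$\sum_{i} m_i E_k(i)^2 = n_{\text{vert}}\, d,$$
where $m_i=\binom ni-\binom n{i-1}$ and $n_{\text{vert}}=\binom nr$. This comes from computing the trace of $A^2$. Combined with a crude bound it might suffice, but more robustly I would show directly that the ratio $|E_k(i+1)|/|E_k(i)|$ is controlled.

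The cleaner route is via the quotient structure. The eigenspace for index $i$ can be realised by functions of the form $f(v)=\sum_{|T|=i} c_T\prod_{t\in T}v_t$ with appropriate orthogonality conditions. Equivalently, $E_k(i)$ is the eigenvalue of the operator acting on pairs $(v,T)$ with $|T\cap \operatorname{supp} v|$ fixed. A convenient expression is
$$E_k(i)=d\cdot \mathbb{E}\Big[(-1)^{|J|}\Big]\text{-type averages},$$
namely
$$\frac{E_k(i)}{d}=\sum_j (-1)^j \frac{\binom ij\binom{r-i}{k-j}\binom{n-r-i}{k-j}}{\binom rk\binom{n-r}{k}}.$$
I would try to prove $|E_k(i)|\le |E_k(1)|$ for $i\ge 2$ by showing that $E_k(i)/E_k(i-1)$ alternates sign with absolute value at most $1$. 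The hypothesis $k(n-1)\ge r(n-r)$ says exactly that the fraction of coordinates swapped along an edge is at least roughly $r(n-r)/n$, which is the ``mixing'' regime. In that regime each additional index $i$ should multiply the eigenvalue by a factor of modulus at most $|E_k(1)|/d\le 1$, roughly.

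\textbf{The main obstacle.} The hard step is making this rigorous, that is, showing $|E_k(i)|\le|E_k(1)|$ for every $i\ge2$. My first attempt would use the three-term recurrence of dual Hahn polynomials in the variable $i$ to derive a contraction inequality. If that gets messy, I would fall back on an interlacing/induction argument in $n$ and $r$: deleting a coordinate relates $G(n,r,s)$ to $G(n-1,r,s)$ and $G(n-1,r-1,s-1)$, and the eigenvalue families $E_k(i)$ satisfy Pascal-type identities under these operations.
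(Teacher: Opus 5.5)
The paper does not prove this statement; it quotes it from \cite{Brouwer}, so your proposal has to stand on its own. Your computation of $E_{r-s}(1)$ is correct. You could add that the hypothesis $(r-s)(n-1)\ge r(n-r)$ is equivalent to $sn-r^2\le -(r-s)$, so $E_{r-s}(1)<0$. But this is the easy half. The content of the theorem is the inequality $|E_{r-s}(i)|\le |E_{r-s}(1)|$ for $2\le i\le r$. For it you only list candidate strategies (three-term recurrence, ratio contraction, trace identity, interlacing) and carry none of them out. As written, nothing beyond the value of $E_{r-s}(1)$ is proved, so there is a genuine gap.

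The concrete mechanism you propose is also false inside the theorem's range, though the theorem itself still holds in these examples.
\begin{itemize}
\item For $G(8,4,1)$ ($k=3$, hypothesis $21\ge 16$) the eigenvalues $E_3(0),\dots,E_3(4)$ are $16,-8,2,2,-4$. The signs do not alternate, and $|E_3(4)|=4>|E_3(3)|=2$, so no argument establishing $|E_k(i)|\le|E_k(i-1)|$ can exist.
\item For $G(7,3,1)$, where the hypothesis holds with equality, they are $18,-3,-3,3$. So $|E_2(2)/E_2(1)|=1$, far above your heuristic factor $|E_2(1)|/d=1/6$.
\end{itemize}
Since $|E_k(i)|$ is genuinely non-monotone in $i$, each $E_k(i)$ with $i\ge 2$ has to be compared with $E_k(1)$ directly. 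The hypothesis must enter quantitatively, not as a ``mixing'' heuristic: it is essentially sharp. For example, $G(8,3,1)$ lies just outside the range and has $E_2(1)=-2$ but $E_2(2)=-5$.

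Your fallbacks do not close the gap either.
\begin{itemize}
\item The trace identity only gives $|E_k(i)|\le\sqrt{\binom nr d/m_i}$, which is useless for small $i$. For the perfect matching $G(2r,r,0)$ at $i=2$, this bound is exponentially larger than the true value $1$. The paper borrows this idea from \cite{Brouwer} only for the range $i>2s$ in the proof of Theorem~\ref{ThMerge}.
\item Cauchy interlacing with the induced subgraphs $G(n-1,r,s)$ and $G(n-1,r-1,s-1)$ bounds the spectra of the subgraphs by that of the whole graph. That is the wrong direction for an upper bound on $\lambda(G(n,r,s))$, and it ignores the edges between the two halves.
\end{itemize}
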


Note that the condition $(r - s)(n - 1) \geq r(n - r)$ from the last theorem is equivalent to $r^2 \geq sn + r - s$. This means that nothing is known when, say, $r^2$ is smaller than $sn$. This is unfortunate since we can use the spectrum of such graphs to obtain progress in various problems, some of them are discussed later in this paper.

In this paper we are going to present a bunch of new results about the spectrum of Johnson graphs and then apply them to obtain new bounds for various characteristics of Johnson graphs. We also introduce some theorems that work for arbitrary $(n, d, \lambda)$-graphs. Here we use the notation $(n, d, \lambda)$-graph for a $d$-regular graph $G$ on $n$ vertices such that $\lambda(G) = \lambda$.

The rest of the paper is organized as follows. In Section 2 we obtain new bounds for $\lambda(G(n, r, s))$ for different values of parameters. In Section 3 we show some applications of the results of Section 2 and prove new results on the modularity, hamiltonicity and the threshold of the appearance of the Hamilton cycle and the giant component in such graphs.

\section{Spectrum of $G(n, r, s)$}

\subsection{Formulation of the results}

We are going to prove two theorems. The first one works for the symmetric case (i.e., $n=4s$, $r=2s$). These graphs are of great importance in combinatorial geometry, where they were used to disprove Borsuk's conjecture and to obtain first nontrivial bounds on Borsuk's number (see \cite{KK, RaiMeta}).

\begin{theorem}
\label{Th2}
There exists an integer $N$ such that for all $s > N$
$$
\lambda(G(4s, 2s, s)) = |E_{s}(2)| = \frac{4s - 2}{s^2}\binom{2s - 2}{s - 1}^2 = \frac{1}{4s - 2}E_s(0) = O\left(\frac{d(G(4s, 2s, s))}{s}\right).
$$
\end{theorem}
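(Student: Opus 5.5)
By Theorem~\ref{Eigenvalues}, the plan is to compare the values $|E_s(i)|$ for $1\le i\le 2s$ (with $r=2s$, $n=4s$, $k=r-s=s$) and show that $|E_s(2)|$ is the largest of them for all large $s$. The eigenvalue $E_s(0)=\binom{2s}{s}^2=d(G(4s,2s,s))$ is the degree and is excluded from $\lambda$. We will use the second form of the formula:
$$E_s(i)=\sum_{j=0}^{s}(-1)^j\binom{i}{j}\binom{2s-i}{s-j}^2 .$$

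\textbf{Step 1: exact values for small $i$.} For $i=1$ the formula gives $\binom{2s-1}{s}^2-\binom{2s-1}{s-1}^2=0$. This is consistent with Theorem~\ref{BrT}, since here $sn-r^2=0$. For $i=2$ we get $E_s(2)=\binom{2s-2}{s}^2-2\binom{2s-2}{s-1}^2+\binom{2s-2}{s-2}^2$. Writing $\binom{2s-2}{s}=\binom{2s-2}{s-2}=\frac{s-1}{s}\binom{2s-2}{s-1}$ gives
$$E_s(2)=\Bigl(2\frac{(s-1)^2}{s^2}-2\Bigr)\binom{2s-2}{s-1}^2=-\frac{4s-2}{s^2}\binom{2s-2}{s-1}^2 .$$
Using $\binom{2s}{s}=\frac{2(2s-1)}{s}\binom{2s-2}{s-1}$, this equals $-\frac{1}{4s-2}E_s(0)$. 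This yields every equality in the statement, and the $O(d/s)$ bound follows. It remains to show $|E_s(i)|\le|E_s(2)|$ for all $3\le i\le 2s$.

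\textbf{Step 2: symmetry and generating functions.} There is a symmetry $E_s(2s-i)=\pm E_s(i)$: the graph is invariant under complementation $v\mapsto \mathbf 1-v$, or one can check it directly from the formula. So it suffices to treat $i\le s$. Next, $E_s(i)$ is the coefficient of $x^s y^s$ in $(1-xy)^i\bigl((1+x)(1+y)\bigr)^{2s-i}$. Equivalently, by Cauchy's formula on the torus $x=e^{i\theta}$, $y=e^{i\varphi}$,
$$E_s(i)=\frac{1}{4\pi^2}\iint (1-xy)^i(1+x)^{2s-i}(1+y)^{2s-i}x^{-s}y^{-s}\,d\theta\,d\varphi .$$
On the torus the modulus of the integrand is $|1-xy|^i\,(2\cos\frac\theta2)^{2s-i}(2\cos\frac\varphi2)^{2s-i}$. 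This gives the bound
$$|E_s(i)|\le \frac{1}{4\pi^2}\iint \Bigl|2\sin\tfrac{\theta+\varphi}{2}\Bigr|^i\Bigl|2\cos\tfrac\theta2\Bigr|^{2s-i}\Bigl|2\cos\tfrac\varphi2\Bigr|^{2s-i}d\theta\, d\varphi .$$
For $i$ in a middle range, say $\varepsilon s\le i\le s$, this is maximized away from the saddle at $\theta=\varphi=0$. A Laplace-type estimate should give $|E_s(i)|\le c^s\,4^{2s}$ with $c<1$, or more precisely exponentially smaller than $d/s\approx 16^s/s^2$. Balancing $|\sin|^{i}$ against $\cos^{2s-i}$ should bound the maximum by $(4\rho)^{2s}$ with some $\rho<1$ uniformly in this range.

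\textbf{Step 3: small fixed $i\ge 3$ up to $\varepsilon s$.} This is the main obstacle, because for bounded $i$ the sizes are polynomially comparable: heuristically $E_s(i)\approx d\cdot s^{-\lceil i/2\rceil}\cdot C_i$. I would expand $E_s(i)/\binom{2s-i}{s}^2=\sum_j(-1)^j\binom ij\rho_j^2$, where $\rho_j=\binom{2s-i}{s-j}/\binom{2s-i}{s}$ is approximately $e^{-(j-i/2)^2/s}$ times a constant near $1$. The alternating sum is then an $i$-th finite difference of a smooth Gaussian-like function. Odd-order differences around the symmetric point vanish to one extra order, which gives $|E_s(i)|\lesssim d\,(C i/s)^{\lceil i/2\rceil}$. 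For $i=3$ this is already $O(d/s^2)$, beating $d/(4s)$ once $s$ is large. For $4\le i\le \varepsilon s$ the factor $(Ci/s)^{\lceil i/2\rceil}$ decays geometrically once $\varepsilon<1/(2C)$, and the ranges of Steps 2 and 3 overlap. The delicate part is making the finite-difference estimate uniform in $i$ up to $\varepsilon s$. I would attempt this with the same contour integral, now deforming near the saddle and using $|1-xy|^i\le |\theta+\varphi|^i$ locally together with Gaussian moments. Combining the cases gives $\lambda=|E_s(2)|$ for $s>N$.
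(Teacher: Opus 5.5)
Your route differs from the paper's and can be completed. However, one step is false, and the decisive estimate in Step~3 is only sketched.

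\textbf{The false step.} The symmetry $E_s(2s-i)=\pm E_s(i)$ does not hold. Complementation is an automorphism, but since $|\bar A\cap\bar B|=|A\cap B|$ here, it commutes with every matrix of the Johnson scheme. So it maps each eigenspace $V_i$ to itself and relates nothing across $i\leftrightarrow 2s-i$. The identity itself fails: $E_s(0)=\binom{2s}{s}^2$ while $E_s(2s)=(-1)^s\binom{2s}{s}$; for instance $E_2(0)=36$ and $E_2(4)=6$. The symmetry that does hold is $j\mapsto i-j$ in $\binom ij\binom{2s-i}{s-j}^2$, which gives $E_s(i)=0$ exactly for odd $i$.

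\textbf{The reduction is not needed.} Bounding your torus integrand by its supremum gives $|E_s(i)|\le 2^{i}4^{2s-i}=2^{4s-i}$ for \emph{every} $i$. Since $|E_s(2)|=\binom{2s}{s}^2/(4s-2)\ge 16^s/(16s^2)$, this settles all $i\ge 3\log_2 s$ with no Laplace analysis. This is the paper's first case.

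\textbf{Closing Step~3.} For $3\le i\le s/2$, the tool you name works without deforming the contour, because the unit torus already passes through the saddle $x=y=1$. Use $|\cos\frac\theta2|\le e^{-\theta^2/8}$ on $[-\pi,\pi]$ and $|2\sin\frac u2|\le|u|$, then extend the integral to $\mathbb{R}^2$:
\[
|E_s(i)|\le\frac{2^{4s-2i}}{4\pi^2}\iint_{\mathbb{R}^2}|\theta+\varphi|^i\,e^{-(2s-i)(\theta^2+\varphi^2)/8}\,d\theta\,d\varphi=\frac{2^{4s+1}}{\pi(2s-i)}\cdot\frac{\mathbf{E}|Z|^i}{(4s-2i)^{i/2}},
\]
where $Z$ is standard normal. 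Since $\mathbf{E}|Z|^i\le i^{i/2}$, the ratio to $|E_s(2)|$ is at most $\frac{32s}{\pi}\bigl(\frac{i}{2s}\bigr)^{i/2}$. This decreases in $i$ on this range and is $O(s^{-1/2})$ at $i=3$. The two ranges overlap for large $s$, so together with Step~1 this completes the proof.

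\textbf{Comparison with the paper.} Both proofs handle large $i$ by the same crude bound; the paper gets $(s+1)2^{4s-i}$ termwise. For small $i$ the paper stays discrete:
\begin{itemize}
\item it uses Lemma~\ref{exp2} to prove $C_{4s,2s,s}(i,j)\le(101/200)^iC_{4s,2s,s}(0,0)$;
\item it rewrites the sum with weights $\frac{2j-i+1}{2s-i+1}$, which beats $|E_s(2)|$ only for $i\ge 9$;
\item it kills odd $i$ by the $j\mapsto i-j$ symmetry;
\item it needs explicit closed forms for $E_s(4),E_s(6),E_s(8)$ (Appendix~2).
\end{itemize}
Your analytic bound replaces the recursion and all three explicit computations with one uniform inequality. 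It also explains the pattern $|E_s(2m)|\sim\frac{(2m-1)!!}{(4s)^m}E_s(0)$ seen in Appendix~2. Near the saddle, $1-xy\approx-\sqrt{-1}\,(\theta+\varphi)$, so $(1-xy)^{2m}\approx(-1)^m(\theta+\varphi)^{2m}$ has constant sign, and the absolute-value bound is asymptotically sharp. The paper's argument, in exchange, is entirely elementary.
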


The second one covers the cases $n \gg r \gg s$ as well as constant $r$ and $s$. In the case of constant $r$ and $s$ we obtain the exact value of $\lambda(G(n, r, s))$ for all big enough $n$. In the case of $n \gg r \gg s$ the situation is a bit more complex. Under the additional condition $r = O(\sqrt{n})$ we obtain the following upper bound on the asymptotics of $\lambda$:
$$
\lambda(G(n, r, s)) = O\left(\frac{s}{r}d(G(n, r, s))\right).
$$
If, furthermore, either $s$ tends to infinity or $r$ is $o(\sqrt{n})$ we can derive the exact asymptotics of $\lambda$:
$$
\lambda(G(n, r, s)) = (1 + o(1))\frac{s}{r}d(G(n, r, s)).
$$
Finally, if none of the above is satisfied, we can still obtain a bound on the asymptotics of $\lambda$:
$$
\lambda(G(n, r, s)) = o\left(d(G(n, r, s))\right).
$$

\begin{theorem}
\label{ThMerge}
\begin{enumerate}
    \item Let $r > s \geq 1$. Then there exists an integer $N$ such that $\lambda(G(n, r, s)) = E_{r - s}(1) \sim \frac{s}{r}d(G(n, r, s))$ for all $n > N$.
    \item Let $n \gg r(n) \gg s(n)$, $r^2(n) \gg n$. Then $\lambda(G(n, r, s)) = o(d(G(n, r, s))).$
    \item Let $r(n) = O(\sqrt{n}), r \gg s \geq 1$. Then $\lambda(G(n, r, s)) = O\left(\frac{s}{r}d(G(n, r, s))\right).$
    Moreover, if at least one of  
    \begin{enumerate}
    \item $s = \omega(1)$;
    \item $r = o(\sqrt{n})$,
    \end{enumerate}
    holds, then $\lambda(G(n, r, s)) = (1 + o(1))\frac{s}{r}d(G(n, r, s)).$
\end{enumerate}
\end{theorem}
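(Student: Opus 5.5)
Throughout we use Theorem~\ref{Eigenvalues}: with $k = r-s$, the eigenvalues are $E_k(i)$ for $0 \le i \le r$, and $d = E_k(0) = \binom{r}{k}\binom{n-r}{k}$. We work with the second (alternating) form
$$
E_k(i) = \sum_{j=0}^{k} (-1)^j \binom{i}{j}\binom{r-i}{k-j}\binom{n-r-i}{k-j},
$$
and compare each term with $d$. Write
$$
T_j(i) = \binom{i}{j}\binom{r-i}{k-j}\binom{n-r-i}{k-j}.
$$

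The first estimate is the ratio of consecutive terms:
$$
\frac{T_{j+1}(i)}{T_j(i)} = \frac{(i-j)(k-j)^2}{(j+1)(r-i-k+j+1)(n-r-i-k+j+1)} .
$$
In the regimes $n \gg r$ with $k \approx r$, this ratio is roughly $\frac{i\,r}{(j+1)(s+j+1)\,n}$. Hence for $i \le c\,n/r$ the terms decay geometrically once $j$ is moderate. The $j=0$ term is
$$
T_0(i) = \binom{r-i}{k}\binom{n-r-i}{k} \approx d\left(\tfrac{s}{r}\right)^{i}\left(1-\tfrac{i}{n}\right)^{k},
$$
since $\binom{r-i}{r-s}/\binom{r}{r-s} = \prod_{t<i}\frac{s-t}{r-t} \le (s/r)^i$. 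For $i=1$ one gets $E_k(1) = d\bigl(\frac{s}{r} - \frac{(r-s)^2}{r(n-r)}\bigr)$, which is just the closed form of Theorem~\ref{BrT}. This is $\sim \frac{s}{r}d$ precisely when $r^2 \ll sn$, i.e.\ under $r = o(\sqrt n)$ or $s = \omega(1)$ with $r = O(\sqrt n)$. That gives the lower bound $\lambda \ge |E_k(1)|$ and the claimed asymptotics, provided we show $|E_k(i)| \le (1+o(1))\frac{s}{r}d$ for every $i \ge 2$.

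For the upper bound with $i \ge 2$, we split the range of $i$.

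For small $i$ (say $i \le \varepsilon n/r$), we bound $|E_k(i)| \le \sum_j T_j(i)$. Using the ratio above, $T_j(i) \le d\,(s/r)^{i-j}\,\frac{1}{j!}\,(C i r/n)^j$ up to constants. Summing gives $O(d)\,(s/r)^{i}e^{O(ir^2/(sn))}$. For $i \ge 2$, and assuming $r^2/n = O(1)$, this is $O(d(s/r)^2 \cdot C^i)$, which is $o(\frac{s}{r}d)$ as long as $i$ is not too large. More care is needed to handle $(s/r)^{i}C^{i}$ uniformly, but $s/r \to 0$ helps.

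For large $i$ (from $\varepsilon n/r$ up to $r$), we use the symmetry/orthogonality trick instead of crude bounds. The trace identity $\sum_i m_i E_k(i)^2 = N d$, where $N = \binom{n}{r}$ and $m_i = \binom{n}{i} - \binom{n}{i-1}$, yields
$$
|E_k(i)| \le \sqrt{N d / m_i}.
$$
Since $m_i \ge \binom{n}{i}/2$ is enormous for $i \ge \varepsilon n/r$, this already gives $o(\frac{s}{r}d)$ when $d^2$ is not too large relative to $N$. Where this fails, we fall back on the first form of $E_k(i)$, which is a Hahn/Eberlein polynomial, and use a bound on such polynomials.

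The three parts of the theorem then follow from this scheme.

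Part 1 ($r,s$ fixed, $n \to \infty$). Here $E_k(i)$ is a polynomial in $n$ of degree $k-j$ in its $j$-th term, so
$$
E_k(i) = \binom{r-i}{k}\frac{n^k}{k!}(1+O(1/n)).
$$
This is maximised over $i \ge 1$ at $i=1$, while $E_k(i)=0$ exactly when $r-i<k$, i.e.\ $i > s$. This gives the exact value for large $n$ directly.

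Part 2 ($r^2 \gg n$, $r \gg s$). The same split applies with the cutoff at $i \approx \sqrt{\cdot}$, and we only need $o(d)$. The trace bound handles $i \ge 2$ once $m_i \gg N/d$, and $E_k(1)$ is $o(d)$ by Theorem~\ref{BrT}'s formula.

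The main obstacle is the intermediate range of $i$, where neither the geometric-decay estimate nor the trace bound is strong enough. There I would first try pairing consecutive alternating terms to exploit cancellation.
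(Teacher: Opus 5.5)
Your Part~1 is the paper's argument, up to two harmless slips. For $i>s$ only the $n^{k}$-coefficient of $E_k(i)$ vanishes, so $E_k(i)=O(n^{k-1})$ rather than $0$. The exact value is $E_k(1)=\frac{sn-r^2}{r(n-r)}\,d$; your expression is missing the factor $\frac{n-2r+s}{n-r}$ on $\frac{s}{r}$.

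The genuine gap is Part~2, which your scheme does not establish. The ``intermediate range'' you postpone starts at $i=2$, because both of your tools break down exactly under the hypothesis $r^2\gg n$.
\begin{itemize}
\item Your small-$i$ estimate $O(d)(s/r)^i e^{O(ir^2/(sn))}$ is, as you note yourself, only useful when $r^2/n=O(1)$.
\item The trace bound needs $m_i\gg N/d$. But $d/N$ is the probability that a random $r$-set meets a fixed one in exactly $s$ points. This is exponentially small in $r^2/n$ when $s$ lies well below the mean $r^2/n$.
\end{itemize}
Concretely, $r=n^{3/4}$, $s=1$ satisfies every hypothesis of Part~2, yet $N/d=e^{(1+o(1))\sqrt n}$. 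So $\sqrt{Nd/m_i}\ge d$ for all $i\le \sqrt n/(2\log n)$, while your other estimate reads $n^{-3i/4}e^{O(i\sqrt n)}$. Neither yields even $E_k(2)=o(d)$.

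The paper closes Part~2 with two concrete ingredients.
\begin{itemize}
\item For $i\le\sqrt n$ it does pair consecutive terms, in a precise form. Lemma~\ref{exp2} turns the alternating sum into (\ref{oth2}), whose coefficients $\frac{(r-s-j+1)^2-(s-i+j)(n-2r+s+j-i)}{(r-i+1)(n-r-i+1)}$ are $o(1)$ in this range. They multiply terms $C_{n,r,s}(i-1,j-1)\le 2^{1-i}C_{n,r,s}(0,0)$.
\item For $i>\sqrt n$ it uses a sign-free bound. From $\binom{n-r-i}{k-j}\le\binom{n-r-i}{k}$ and Vandermonde, $\sum_j\binom ij\binom{r-i}{k-j}=\binom rk$, one gets $|E_k(i)|\le\binom rs\binom{n-r-i}{k}\le d\,e^{-ik/(n-r)}$. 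This is $o(d)$ precisely because $ik/n\gtrsim r/\sqrt n\to\infty$.
\end{itemize}

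Your own termwise route can also be rescued, with no cancellation, by keeping $\binom ij$ instead of $i^j/j!$. One checks $T_j(i)\le d\binom ij\bigl(\frac{s}{r-i}\bigr)^{i-j}\bigl(\frac{r-s}{n-r-i}\bigr)^{j}$, so
\[
|E_k(i)|\le d\Bigl(\frac{s}{r-i}+\frac{r-s}{n-r-i}\Bigr)^i .
\]
This is $o(d)$ for $1\le i\le r/2$, and $O((s/r)^2d)$ for $i\ge 2$ when $r=O(\sqrt n)$. For $i>r/2$ the trace bound then works, since there $N/(m_id)\le 2(32r/n)^{r/4}$.

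Part~3 also has two unfinished points. You never verify the trace step; what is needed is $N/(m_id)=o(s^2/r^2)$, not a condition on $d^2/N$. And your factor $(s/r)^{i-j}$ should be $(s/(r-i))^{i-j}$, which is not small for $i$ near $r$. This matters in case 3(b), where your cutoff $\varepsilon n/r$ exceeds $r$.
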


\begin{remark}
In this text we use the denotation $f(n) \gg (\ll) g(n)$ for functions $f$ and $g$ such that $\frac{f(n)}{g(n)} \to \infty (\to 0)$ as $n \to \infty$.
\end{remark}

\begin{remark}
It is also possible to prove that for any sequences $r(n), s(n)$ s.t. $n \gg r \gg s$ we have $\lambda(G(n, r, s)) = o(d(G(n, r, s)))$. This proof is however quite technical and thus we will not give it.
\end{remark}

Before we proceed with the proof of these theorems, we want to mention that for all triples $(n, r, s)$ with $2r - s \leq n \leq 150$ we have calculated $\lambda(G(n, r, s))$ with the help of the computer. In all these cases $\lambda(G(n, r, s))$ was equal to either $E_{r - s}(1)$ or $E_{r - s}(2)$. We thus believe that the following conjecture holds.

\begin{conjecture}
    $\lambda(G(n, r, s)) = \max\{|E_{r - s}(1)|, |E_{r - s}(2)|\}$.
\end{conjecture}

\subsection{Proofs}

Before we start with the proofs of mentioned results, let us state a useful lemma.

\begin{lemma}
\label{exp2}
Let $C_{n, r, s}(i, j) = \binom{i}{j}\binom{r - i}{r - s - j}\binom{n - r - i}{r - s - j}$. Then 
\begin{multline*}
    C_{n, r, s}(i, j) = \frac{(r-s-j+1)^2}{(r - i + 1)(n - r - i + 1)}C_{n, r, s}(i - 1, j - 1) +\\+ \frac{(s - i + j + 1)(n - 2r + s + j - i + 1)}{(r - i + 1)(n - r - i + 1)}C_{n, r, s}(i - 1, j).
\end{multline*}
\end{lemma}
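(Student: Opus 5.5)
The plan is to reduce everything to Pascal's rule $\binom{i}{j} = \binom{i-1}{j-1} + \binom{i-1}{j}$ for the first factor of $C_{n,r,s}(i,j)$. To do this, I will show that each of the two terms on the right-hand side equals the corresponding Pascal summand multiplied by the common factor $\binom{r-i}{r-s-j}\binom{n-r-i}{r-s-j}$. Write $k = r-s$ throughout.

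\textbf{First term.} We have
$$C_{n,r,s}(i-1,j-1) = \binom{i-1}{j-1}\binom{r-i+1}{k-j+1}\binom{n-r-i+1}{k-j+1}.$$
I will use the absorption identity $\binom{m+1}{t+1} = \frac{m+1}{t+1}\binom{m}{t}$, once with $m = r-i$ and once with $m = n-r-i$, both with $t = k-j$. This gives
$$\binom{r-i+1}{k-j+1}\binom{n-r-i+1}{k-j+1} = \frac{(r-i+1)(n-r-i+1)}{(k-j+1)^2}\binom{r-i}{k-j}\binom{n-r-i}{k-j}.$$
Since $k-j+1 = r-s-j+1$, the prefactor $\frac{(r-s-j+1)^2}{(r-i+1)(n-r-i+1)}$ cancels exactly. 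So the first term equals $\binom{i-1}{j-1}\binom{r-i}{k-j}\binom{n-r-i}{k-j}$.

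\textbf{Second term.} We have
$$C_{n,r,s}(i-1,j) = \binom{i-1}{j}\binom{r-i+1}{k-j}\binom{n-r-i+1}{k-j}.$$
Here I will use $\binom{m+1}{t} = \frac{m+1}{m+1-t}\binom{m}{t}$. With $m = r-i$ the denominator is $r-i+1-k+j = s-i+j+1$. With $m = n-r-i$ it is $n-r-i+1-k+j = n-2r+s+j-i+1$. These are precisely the factors in the numerator of the second coefficient, and the $(r-i+1)(n-r-i+1)$ in its denominator cancels as well. So the second term equals $\binom{i-1}{j}\binom{r-i}{k-j}\binom{n-r-i}{k-j}$. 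Adding the two terms and applying Pascal's rule yields $C_{n,r,s}(i,j)$.

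\textbf{Main obstacle: boundary cases.} The only real difficulty is the range where these manipulations are valid, namely where the divided quantities are nonzero and the binomial coefficients are not forced to zero by the out-of-range convention of Theorem~\ref{Eigenvalues}.
\begin{itemize}
\item I will assume $1 \le i \le r$ and $n-r-i \ge 0$, which is the range used later. Then $(r-i+1)(n-r-i+1) > 0$, and $k-j+1 \ne 0$ whenever $j \le k$.
\item If $s-i+j+1 = 0$ (or $n-2r+s+j-i+1 = 0$), then $k-j = r-i+1 > r-i$. Hence $\binom{r-i}{k-j} = 0$, so both $C_{n,r,s}(i,j)$ and the first term vanish. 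The second term also vanishes, because its coefficient is zero while $C_{n,r,s}(i-1,j)$ is finite. So the identity holds trivially.
\item The cases $j > k$, $j < 0$ or $j > i$ are handled the same way: every product involved contains a binomial coefficient that is zero by convention.
\end{itemize}
Alternatively, I would regard both sides as polynomials in $n$ for fixed $r, s, i, j$. The identity holds for all large $n$ by the computation above, and it then extends to every $n$ for which the denominators are nonzero.
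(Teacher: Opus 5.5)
Your proof is correct and is essentially the paper's argument in Appendix 1, which uses the same absorption cancellations followed by Pascal's rule. The paper treats $j=0$ and $j=i$ as separate cases, whereas your use of Pascal's rule with the zero convention covers them automatically, and you are more careful than the paper about the degenerate denominators. Two small slips, neither affecting the conclusion. When $n-2r+s+j-i+1=0$ one gets $k-j=n-r-i+1$, so $\binom{n-r-i}{k-j}=0$; it is not $k-j=r-i+1$. For $j=k+1$ the first term vanishes because of the factor $(r-s-j+1)^2=0$, not because of a zero binomial coefficient, since $C_{n,r,s}(i-1,k)=\binom{i-1}{k}$ can be nonzero.
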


The proof is quite straightforward. We consider three cases, $j = 0, j = i$ and $1 \leq j \leq i - 1$ and for each of the cases we check the equality by direct calculations. For a rigorous proof of this lemma see Appendix 1.

\subsubsection{Proof of Theorem \ref{Th2}}

For the sake of convenience we will once again give the statement of theorems in the beginning of the proof.

{\bf Theorem. }{\it There exists an integer $N$ such that for all $s > N$
$$
\lambda(G(4s, 2s, s)) = |E_{s}(2)| = \frac{4s - 2}{s^2}\binom{2s - 2}{s - 1}^2 = \frac{1}{4s - 2}E_s(0) = O\left(\frac{d(G(4s, 2s, s))}{s}\right).
$$}

Let us use Lemma \ref{exp2} to obtain a more convenient expression for $E_{s}(i)$:
\begin{multline}
    \label{oth}
    E_{s}(i) = \sum\limits_{j = 0}^{i}{(-1)^{j}C_{4s, 2s, s}(i, j)} = \\ = \sum\limits_{j = 0}^{i}{(-1)^{j}\left(\frac{(s - j + 1)^2}{(2s - i + 1)^2}C_{4s, 2s, s}(i - 1, j - 1) + \frac{(s - i + j + 1)^2}{(2s - i + 1)^2}C_{4s, 2s, s}(i - 1, j)\right)} = \\ = \sum\limits_{j = 0}^{i - 1}{(-1)^{j}\left(\frac{(s - i + j + 1)^2}{(2s - i + 1)^2} - \frac{(s - j)^2}{(2s - i + 1)^2}\right)C_{4s, 2s, s}(i - 1, j)} = \\ = \sum\limits_{j = 0}^{i - 1}{(-1)^{j}\frac{2j - i + 1}{2s - i + 1}C_{4s, 2s, s}(i - 1, j)}.
\end{multline}

Let us now consider two cases.

\begin{enumerate}
    \item $i > 4\log_{2}s$. Then 
    \begin{multline*}
        |E_{s}(i)| = \left|\sum\limits_{j = 0}^{s}{(-1)^{j}\binom{i}{j}\binom{2s - i}{s - j}^2}\right| \leq \sum\limits_{j = 0}^{s}{\binom{i}{j}\binom{2s - i}{s - j}^2} \leq \\ \leq (s + 1)2^{i}2^{4s-2i} = (s + 1)2^{4s - i} \leq (s + 1)2^{4s - 4\log_{2}s} = \frac{(s + 1)2^{4s}}{s^4} = o\left(\frac{E_{s}(0)}{s}\right) = o(E_{s}(2)).
    \end{multline*}

    \item $ i \leq 4\log_{2}s$. Let us first prove by induction that for such $i$, $0 \leq j \leq i$ and big enough $s$ the inequality $C_{4s, 2s, s}(i, j) \leq \left(\frac{101}{200}\right)^iC_{4s, 2s, s}(0, 0)$ holds. For $i = 0$ this is obvious. To prove the induction step it is sufficient to use Lemma \ref{exp2}. 
    \begin{multline*}
        C_{4s, 2s, s}(i, j) = \frac{(s - j + 1)^2}{(2s - i + 1)^2}C_{4s, 2s, s}(i - 1, j - 1) + \frac{(s - i + j + 1)^2}{(2s - i + 1)^2}C_{4s, 2s, s}(i - 1, j) \leq \\ \leq \left(\frac{(s + 1)^2}{(2s - 4\log_{2}s)^2} + \frac{(s + 1)^2}{(2s - 4\log_{2}s)^2}\right)\left(\frac{101}{200}\right)^{i-1}C_{4s, 2s, s}(0, 0) = \\ = \frac{(s + 1)^2}{2(s - 2\log_{2}s)^2}\left(\frac{101}{200}\right)^{i-1}C_{4s, 2s, s}(0, 0) \leq \left(\frac{101}{200}\right)^iC_{4s, 2s, s}(0, 0).
    \end{multline*}
    So we have proved that $C_{4s, 2s, s}(i, j) \leq \left(\frac{101}{200}\right)^iC_{4s, 2s, s}(0, 0)$. Let us now use (\ref{oth}) to obtain the bound on $E_{s}(i)$.
    \begin{multline*}
        |E_{s}(i)| = \left|\sum\limits_{j = 0}^{i - 1}{(-1)^{j}\frac{2j - i + 1}{2s - i + 1}C_{4s,2s,s}(i - 1, j)}\right| \leq \sum\limits_{j = 0}^{i - 1}{\frac{|2j - i + 1|}{2s - i + 1}C_{4s,2s,s}(i - 1, j)} \leq  \\ \leq \frac{i(i + 1)}{2s - i + 1}\left(\frac{101}{200}\right)^{i-1}C_{4s,2s,s}(0, 0) \leq \frac{K_i}{2s - 4\log_{2}s + 1}E_{s}(0),
    \end{multline*}
    where $K_i = i(i + 1)\left(\frac{101}{200}\right)^{i - 1}$. One can easily see that for $i \geq 9$ the value of $K_i$ is strictly smaller than $\frac{1}{2}$ and thus for all big enough $s$ we have $\forall i \geq 9: |E_s(i)| < |E_s(2)|$. The only remaining thing to consider is the case $i < 10$. Let us first notice that for odd $i$ we have $E_{s}(i) = 0$, as $C_{4s, 2s, s}(i, j) = C_{4s, 2s, s}(i, i - j), (-1)^j = -(-1)^{i - j}$. So the last thing to prove is that for big enough $s$
    $$
    \min\{|E_{s}(2)|, |E_{s}(4)|, |E_{s}(6)|, |E_{s}(8)|\} = |E_{s}(2)| = \frac{4s-2}{s^2}\binom{2s-2}{s-1}^2 = \frac1{4s-2}E_{s}(0).
    $$
    
    This can be done by direct calculation, so we omit it here. For additional details about the exact values of $E_s(2k), k = 2, 3, 4$, see Appendix 2.
\end{enumerate}

\subsubsection{Proof of Theorem \ref{ThMerge}}

{\bf Theorem. } {\it
\begin{enumerate}
    \item Let $r > s \geq 1$. Then there exists an integer $N$ such that $\lambda(G(n, r, s)) = E_{r - s}(1) \sim \frac{s}{r}d(G(n, r, s))$ for all $n > N$.
    \item Let $n \gg r(n) \gg s(n)$, $r^2(n) \gg n$. Then $\lambda(G(n, r, s)) = o(d(G(n, r, s))).$
    \item Let $r(n) = O(\sqrt{n}), r \gg s \geq 1$. Then $\lambda(G(n, r, s)) = O\left(\frac{s}{r}d(G(n, r, s))\right).$
    Moreover, if at least one of  
    \begin{enumerate}
    \item $s = \omega(1)$;
    \item $r = o(\sqrt{n})$,
    \end{enumerate}
    holds, then $\lambda(G(n, r, s)) = (1 + o(1))\frac{s}{r}d(G(n, r, s)).$
\end{enumerate}
}

We start with the case of constant $r$ and $s$. Let us calculate the asymptotics of $E_{r - s}(i)$. For $i \leq s$ we have
\begin{multline*}
    E_{r - s}(i) = \binom{r - i}{r - s}\binom{n - r - i}{r - s} + \sum_{j = 1}^{r - s}(-1)^j\binom{i}{j}\binom{r - i}{r - s - j}\binom{n - r - i}{r - s - j} = \\ = \binom{r - i}{r - s}\binom{n - r - i}{r - s} + \sum_{j = 1}^{r - s}C_jn^{r - s - j} = \\ = \binom{r - i}{r - s}\binom{n - r - i}{r - s} + o(n^{r - s}) \sim \frac{1}{(r - s)!}\binom{r - i}{r - s}n^{r - s}.
\end{multline*}
For $i > s$ the asymptotic equivalence
$$
\binom{r - i}{r - s}\binom{n - r - i}{r - s} + o(n^{r - s}) \sim \frac{1}{(r - s)!}\binom{r - i}{r - s}n^{r - s})
$$
does not hold, as the coefficient before $n^{r-s}$ is equal to 0. Nevertheless for such $i$ the following equality holds $E_{r - s}(i) = o(n^{r - s})$.
As one can see, for big enough $n$ the biggest absolute value of $E_{r - s}(i)$ is $|E_{r - s}(1)|   \sim \frac{1}{(r - s)!}\binom{r - 1}{r - s}n^{r - s} = \frac{s}{r}\binom{r}{s}\frac{n^{r-s}}{(r-s)!} \sim \frac{s}{r}\binom{r}{s}\binom{n - r}{r-s} = \frac{s}{r}d(G(n, r, s))$.

For the other two cases we need to make use of Lemma \ref{exp2}. First of all, just like in Theorem \ref{Th2} we want to obtain a bound on $C_{n, r, s}(i, j)$. Since in our case we have $n \gg r \gg s$, we may assume that $n > 100r, r > 100s$. Then the following bound is pretty straightforward: $$\forall i \leq \frac{r}{10}: C_{n, r, s}(i, j) \leq \left(\frac{1}{2}\right)^iC_{n, r, s}(0, 0).$$ 
For a rigorous proof of this inequality see Appendix 3.

The second thing we want to obtain is a new formula for $E_{r - s}(i)$. By applying Lemma \ref{exp2} to the formula $E_{r - s}(i) = \sum\limits_{j = \max\{0, i - s\}}^{i}{(-1)^{j}C_{n, r, s}(i, j)}$ we obtain the following equality:
\begin{multline}
    \label{oth2}
    E_{r - s}(i) = \sum\limits_{j = \max\{0, i - s\} + 1}^{i} (-1)^{j} \left(\frac{(r - s - j + 1)^2}{(r - i + 1)(n - r - i + 1)} - \frac{(s - i + j)(n - 2r + s + j - i)}{(r - i + 1)(n - r - i + 1)}\right)\times \\ \times C_{n, r, s}(i - 1, j - 1) +  Q(n,r,s,i),
\end{multline}
where $Q(n, r, s, i)$ can be shown to be $O\left(\frac{r}{n}C_{n, r, s}(0, 0)\right)$. For the proof and the exact value of $Q(n, r, s, i)$ see Appendix 4.

We are now ready to apply the formulas above to prove the desired results. By combining the bound on $C_{n, r, s}(i, j)$ and formula (\ref{oth2}), we get that for all $i \leq \frac{r}{10}$
\begin{multline}
\label{SmallI}
|E_{r - s}(i)| \leq \frac{i}{2^{i - 1}}C_{n, r, s}(0, 0)\times \\ \times\max_{j \in \{0, \dotsc, i\}}\left|\frac{(r - s - j + 1)^2 - (s - i + j)(n - 2r + s + j - i)}{(r - i + 1)(n - r - i + 1)}\right|  + O\left(\frac{r}{n}C_{n, r, s}(0, 0)\right).
\end{multline}

It is easy to see that the numerator of $\frac{(r - s - j + 1)^2 - (s - i + j)(n - 2r + s + j - i)}{(r - i + 1)(n - r - i + 1)}$ is a linear function of $j$ while the denominator does not depend on $j$ at all. This means that the maximum value of this function is attained either at $j = 0$ or $j = i$.

From now on we consider the two cases of the theorem separately. 

For the case $r^2 \gg n$ we can use the expression (\ref{SmallI}) to prove that for $1 \leq i \leq \sqrt{n} \leq r/10$ we have $E_{r - s}(i) = o(C_{n, r, s}(0, 0))$. To do so we just need to prove that for such $i$
$$
\left|\frac{(r - s + 1)^2 - (s - i)(n - 2r + s - i)}{(r - i + 1)(n - r - i + 1)}\right| = o(1),
$$
$$
\left|\frac{(r - s - i + 1)^2 - s(n - 2r + s)}{(r - i + 1)(n - r - i + 1)}\right| = o(1),
$$
which is quite easy to verify by direct calculations.

Now it remains to consider $i > \sqrt{n}$. 
Let us use the formula
$$
    E_{r - s}(i) = \sum\limits_{j = 0}^{r - s}{(-1)^{j}\binom{i}{j}\binom{r - i}{r - s - j}\binom{n - r - i}{r - s - j}}.
$$
We have
$$
|E_{r - s}(i)| \leq \max_{j \in \{0, \dotsc, r - s\}}\binom{n - r - i}{r - s - j}\sum\limits_{j = 0}^{r - s}{\binom{i}{j}\binom{r - i}{r - s - j}} = \binom{r}{r - s}\max_{j \in \{0, \dotsc, r - s\}}\binom{n - r - i}{r - s - j}.
$$
As $n - 2r > 2(r - s)$ we obtain that the maximum value of the right hand side is attained at $j = 0$. We thus have the following bound on $|E_{r - s}(i)|$: 
$$
|E_{r - s}(i)| \leq \binom{r}{r - s}\binom{n - r - i}{r - s}.
$$
It remains to notice that $\binom{r}{r - s}\binom{n - r - i}{r - s} = o\left(\binom{r}{s}\binom{n - r}{r - s}\right)$. 
    
The proof for the third case of the theorem starts in a similar way. We can use the expression (\ref{SmallI}) to prove that for $1 \leq i \leq 2s \leq r/10$ we have $E_{r - s}(i) = O\left(\frac{s}{r}C_{n, r, s}(0, 0)\right)$. To do so we just need to prove that for such $i$
$$
\left|\frac{(r - s + 1)^2 - (s - i)(n - 2r + s - i)}{(r - i + 1)(n - r - i + 1)}\right| = O\left(\frac{s}{r}\right),
$$
$$
\left|\frac{(r - s - i + 1)^2 - s(n - 2r + s)}{(r - i + 1)(n - r - i + 1)}\right| = O\left(\frac{s}{r}\right).
$$
Furthermore, if at least one of the additional assertions is true, these bounds can be strengthened to obtain the following inequalities
$$
\left|\frac{(r - s + 1)^2 - (s - i)(n - 2r + s - i)}{(r - i + 1)(n - r - i + 1)}\right| \leq (1 + o(1))\frac{s}{r},
$$
$$
\left|\frac{(r - s - i + 1)^2 - s(n - 2r + s)}{(r - i + 1)(n - r - i + 1)}\right| \leq (1 + o(1))\frac{s}{r}.
$$
The proofs here are straightforward, so we omit them.

The last part ($i > 2s$) here is however a bit different. To prove it, let us apply the following idea from \cite{Brouwer}. Consider the adjacency matrix $A$ of $G(n,r,s)$. Then every entry on the diagonal of $A^2$ is equal to $d = d(G(n,r,s))$ and thus $$d\binom{n}{r} = \text{tr}(A^2) = \sum_{i = 0}^{\binom{n}{r}}\lambda_i^2 = \sum_{i = 0}^{r}\left(\binom{n}{i} - \binom{n}{i - 1}\right)E_{r-s}(i)^2.$$ From this equality the following bound on $|E_{r - s}(i)|$ is easily obtained $$|E_{r - s}(i)| \leq \sqrt{\frac{\binom{n}{r}\binom{r}{s}\binom{n - r}{r - s}}{\binom{n}{i} - \binom{n}{i - 1}}} = \binom{r}{s}\binom{n - r}{r - s}\sqrt{\frac{\binom{n}{r}}{(\binom{n}{i} - \binom{n}{i - 1})\binom{r}{s}\binom{n - r}{r - s}}}.$$ 
So if suffices to prove that $\frac{\binom{n}{r}}{\left(\binom{n}{i} - \binom{n}{i - 1}\right)\binom{r}{s}\binom{n - r}{r - s}} = O(1/n)$ for $i > 2s$. While the proof of this fact is not very hard it is pretty technical and thus we move its proof to Appendix 5.

Finally, if at least one of the additional assertions is true, we have $$E_{r - s}(1) = \frac{sn - r^2}{r(n - r)}d(G(n,r,s)) = (1 - o(1))\frac{s}{r}d(G(n,r,s)),$$ which completes the proof.

\section{Application of the results of Section 2}

In this part of the paper we apply the results of the previous section to obtain new results about $G(n, r, s)$ graphs.

The first property that we consider is the modularity of these graphs. Informally speaking, modularity measures how well the vertices of the graph can be split into disjoint sets with most of the edges connecting two vertices from the same part. Modularity was introduced in \cite{Item22} by Newman and Girvan and recieved a lot of attention due to its applications in different fields of studies, including biology, physics, sociology and computer science.

A lot of recent works on modularity are devoted to its calculation for different families of graphs, including stars, hypercubes (see \cite{Stars}), almost complete graphs (see \cite{AlmostFull}), trees with small maximum degree (see \cite{Trees}) and various families of random graphs. For example, some bounds on the modularity were obtained for preferential attachment models (see \cite{Item25}), random $d$-regular graphs (see \cite{Item5}) and the classical Erd\H os--R\'enyi model (see \cite{McD}).

In Section 3.1 we apply our knowledge about the spectrum of Johnson graphs to obtain new bounds on their modularity. 

The other thing we consider here is the stability of the modularity. First of all let us introduce a definition of a random subgraph of a graph $G$.

\begin{definition}
Let $G$ be a graph. Then $G(p)$ is a random graph obtained from $G$ by removing every edge with probability $1 - p$ independently from all other edges.
\end{definition}

\begin{remark}
For $G(n, r, s)$ graphs writing $G(n,r,s)(p)$ is not very convenient and thus we use the notation $G_p(n,r,s)$ instead.
\end{remark}

In Section 3.2 we  prove a general theorem about the stability of the modularity for $(n, d, \lambda)$-graphs, or, in other words, that for big enough $p$ graphs $G$ and $G(p)$ almost surely have the same modularity. As a simple corollary we obtain theorems about the modularity of $G_p(n, r, s)$ graphs.

Another interesting property that we consider in Section 3.3 is the emergence of a giant component in $G_p(n,r,s)$ graphs. In \cite{ErdGC} the following result about the Erd\H os--R\'enyi random graph evolution was proven. It turned out that for $p < \frac{1-\varepsilon}{n}$ the graph $K_n(p)$ with high probability (w.h.p.) has only small components, while for $p > \frac{1+\varepsilon}{n}$ there w.h.p. exists a component of size $\Omega(n)$. Here we use the term $``$with high probability$"$ for such a sequence of events $\{Q_n\}_{n = 1}^{\infty}$ that $\mathbf{P}(Q_n) \to 1, n \to \infty$. It is interesting to generalize this result and understand what are the conditions for graph $G$ to evolve in the same way. One of the known sufficient conditions is formulated in terms of $\lambda(G)$, so we apply it to find a threshold probability for the property of containing a giant component for $G(n, r, s)$ graphs. 

The last property we consider is the hamiltonicity of $G(n, r, s)$ and $G_p(n, r, s)$ graphs. In \cite{Hamil} the following conjecture was stated.

\begin{conjecture}
    For any triple $(n, r, s) \notin \{(5, 2, 0), (5, 3, 1)\}$ the graph $G(n, r, s)$ contains a Hamiltonian cycle if and only if $G(n, r, s)$ is connected.
\end{conjecture}

There are a lot of partial results on this conjecture (see \cite{Hamil, H1, H2}). In Section 3.4 we obtain another partial result using our knowledge about the spectrum of $G(n, r, s)$. Moreover, we obtain a new bound on the vertex connectivity of $(n, d, \lambda)$-graphs which is a direct generalization of the result from \cite{Kriv06}.

Finally in Section 3.5 we consider the threshold probability of the appearance of a Hamilton cycle in $G_p(n, r, s)$. This threshold is known for the classical Erd\H os-R\'enyi model and it is of great interest to obtain similar results for other $G(p)$ graphs. We apply our result about the spectrum of $G(4s, 2s, s)$ to obtain the threshold for such graphs.

\subsection{Modularity of Johnson graphs}

Before we start, let us introduce some notions. 

\begin{itemize}
    \item $e(G)$ is the number of edges in $G$.
    \item $e(V)$ is the number of edges of $G$ with both endpoints in $V$ 
    \item $e(V, p)$ --- same as the previous one, but for $G(p)$.
    \item $e(V, U)$ is the number of edges of $G$ with one endpoint in $V$ and the other in $U$.
    \item $e(V, U, p)$ --- same as the previous one, but for $G(p)$.
    \item $\deg(v, p)$ is the degree of vertex $v$ in $G(p)$.
    \item $V_G$ --- the set of all vertices of a graph $G$.
    \item $E_G$ --- the set of all edges of a graph $G$.
    \item $\overline{V} \coloneqq V_G \setminus V$.
\end{itemize}

Let us now define modularity rigorously. 

\begin{definition} 
Let $\mathcal{A} = \{A_1, A_2, ..., A_k\}$ be a partition of the vertices of a graph $G$. The \textit{edge contribution} is then defined as $\sum\limits_{i = 1}^{k}{\frac{e(A_k)}{e(G)}}$, where $e(A) = |\{(v_1,v_2) \in E_G | v_1,v_2 \in A\}|$. 
\end{definition}

\begin{definition}
Let $\mathcal{A} = \{A_1, A_2, ..., A_k\}$ be a partition of the vertices of a graph $G$. The value $$\sum\limits_{i = 1}^{k}{\frac{(\sum_{v\in A_k}{\deg(v)})^2}{4e^2(G)}}$$ is called the \textit{degree tax}.
\end{definition}

\begin{remark}
Note that for $d$-regular graphs the degree tax can be rewritten as 
\begin{equation*}
    \sum\limits_{i = 1}^{k}{\frac{(|A_k|d)^2}{(d|V_G|)^2}} = \sum\limits_{i = 1}^{k}{\frac{|A_k|^2}{|V_G|^2}},
\end{equation*}
where $V_G$ is the set of vertices of $G$.
\end{remark}

\begin{definition}
Let $\mathcal{A} = \{A_1, A_2, ..., A_k\}$ be a partition of the vertices of a graph $G$. The \textit{modularity of partition} $\mathcal{A}$ is defined as the difference between the edge contribution and the degree tax of this partition:
\begin{equation*}
    q(\mathcal{A}) = \sum_{A \in \mathcal{A}}\frac{e(A)}{e(G)} -
    \sum_{A \in \mathcal{A}}\frac{(\sum_{v \in A}\deg(v))^2}{4e^2(V_G)}.
\end{equation*}
\end{definition}

\begin{definition}
The modularity of graph $G$ is defined as a maximum of modularities over all partitions of the vertices of $G$:
\begin{equation*}
    q^*(G) = \max_{\mathcal{A}}\{q(\mathcal{A})\}.
\end{equation*}
\end{definition}

In \cite{Rai} (see Theorem 4) the following bound on the modularity was obtained.

\begin{theorem}
\label{ThModEigen}
Let $\lambda_1 \geq \lambda_2 \geq \dotsc \geq \lambda_n$ be the eigenvalues of the adjacency matrix of a $d$-regular graph $G$ with $d \geq 3$. Then 
$$
q^{*}(G) \leq \frac{\lambda}{d},
$$
where $\lambda = \max\{\lambda_2, -\lambda_n\} = \max\{|\lambda_2|, \dotsc, |\lambda_n|\}$.
\end{theorem}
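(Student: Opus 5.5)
The plan is to compare the edge contribution of an arbitrary partition with its degree tax, using the expander mixing lemma for each part. Fix a partition $\mathcal{A}=\{A_1,\dots,A_k\}$ of $V_G$, write $N=|V_G|$, $a_i=|A_i|/N$, and let $e(G)=dN/2$. Expand the indicator vector of each part in an orthonormal eigenbasis of the adjacency matrix $A$. Its first eigenvector is $\mathbf{1}/\sqrt{N}$. Then $\mathbf{1}_{A_i}=a_i\mathbf{1}+f_i$ with $f_i\perp\mathbf{1}$ and $\|f_i\|^2=|A_i|-|A_i|^2/N=Na_i(1-a_i)$.

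Next I compute $2e(A_i)=\mathbf{1}_{A_i}^{T}A\mathbf{1}_{A_i}$. The cross terms vanish because $A\mathbf{1}=d\mathbf{1}$ and $f_i\perp \mathbf{1}$, so
$$2e(A_i)=d a_i^2 N+f_i^{T}Af_i\le d a_i^2N+\lambda\,Na_i(1-a_i).$$
Here I use that on $\mathbf{1}^\perp$ the quadratic form of $A$ is bounded by $\lambda=\max\{|\lambda_2|,|\lambda_n|\}$; in fact $\lambda_2$ alone would suffice.

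Dividing by $2e(G)=dN$ gives
$$\frac{e(A_i)}{e(G)}-a_i^2\le \frac{\lambda}{d}a_i(1-a_i).$$
By the Remark above, the degree tax of a $d$-regular graph is exactly $\sum a_i^2$. Summing over $i$ therefore yields
$$q(\mathcal{A})\le\frac{\lambda}{d}\sum_i a_i(1-a_i)=\frac{\lambda}{d}\Bigl(1-\sum_i a_i^2\Bigr)\le\frac{\lambda}{d}.$$
Taking the maximum over all partitions $\mathcal{A}$ gives $q^*(G)\le \lambda/d$.

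I do not see a real obstacle here. The only care needed is the normalization, namely that $e(G)$ counts each edge once while the quadratic form counts each edge twice. The hypothesis $d\ge 3$ plays no role in this argument. If the edge-contribution definition is read literally with $e(A_k)$ in place of $e(A_i)$, that is a typo and I interpret it as $e(A_i)$.
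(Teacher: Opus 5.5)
Your proof is correct. The paper gives no proof of this statement; it imports it from \cite{Rai} (Theorem 4) and uses it as a black box.

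Your argument is the standard one behind that result. The per-part inequality you derive from the eigenbasis decomposition, $2e(A_i)\le d|A_i|^2/N+\lambda|A_i|(1-|A_i|/N)$, is exactly the case $A=B=A_i$ of the expander mixing lemma. The paper itself quotes that lemma later as Theorem~\ref{MainL}, so you could have cited it and simply summed over the parts.

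Deriving the inequality by hand as you do gives slightly more. You use only the one-sided bound $f^{T}Af\le\lambda_2\|f\|^2$ on $\mathbf{1}^{\perp}$. You also obtain the sharper estimate $q(\mathcal{A})\le\frac{\lambda}{d}\bigl(1-\sum_i a_i^2\bigr)$, i.e.\ $\lambda/d$ times one minus the degree tax.

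One small precision: if you replace $\lambda$ by $\lambda_2$, the final bound should read $q^*(G)\le\max\{\lambda_2,0\}/d$. For $K_{d+1}$ one has $\lambda_2=-1$ but $q^*=0$. You are right that the hypothesis $d\ge 3$ plays no role. You are also right that $e(A_k)$ in the definition of edge contribution is a typo for $e(A_i)$.
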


With this theorem we can easily obtain the following four corollaries bounding the modularity of $G(n,r,s)$ graphs from above.

\begin{corollary}[of Theorem \ref{LovT}]
\label{C3}
$$
q^{*}(G(n, r, 0)) \leq \frac{r}{n - r}.
$$
\end{corollary}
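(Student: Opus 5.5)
The plan is to combine Theorem \ref{ThModEigen} with Theorem \ref{LovT}, so essentially only hypotheses need checking. By Theorem \ref{LovT}, $\lambda(G(n,r,0)) = \frac{r}{n-r}\,d(G(n,r,0))$. The graph $G(n,r,0)$ is $d$-regular with $d = \binom{n-r}{r}$, since a vertex is adjacent exactly to the $r$-subsets disjoint from its support. Substituting into Theorem \ref{ThModEigen} gives
$$
q^*(G(n,r,0)) \leq \frac{\lambda}{d} = \frac{r}{n-r}.
$$

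The one point needing care is the hypothesis $d \geq 3$ in Theorem \ref{ThModEigen}, so I would treat the small-degree cases separately.

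If $n < 2r$, the graph has no edges, so modularity is degenerate or conventionally $0$ and the bound is trivial. If $n = 2r$, then $d = 1$ and the graph is a perfect matching. Here the right-hand side is $r/(n-r) = 1 \geq q^*$, since the modularity of any graph is less than $1$ (the edge contribution is at most $1$ and the degree tax is positive). If $d = 2$, then $\binom{n-r}{r} = 2$, which forces $r = 1$ and $n = 3$, so $G$ is the triangle $K_3$. Its modularity is $0$ (indeed nonpositive for any partition), which is at most $1/2$.

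Similarly, whenever $r/(n-r) \geq 1$ the bound is automatic. So the only real content is the regime $n > 2r$ with $d \geq 3$, where the two cited theorems apply directly.

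The main obstacle is therefore only this bookkeeping of degenerate cases. I do not expect any genuine difficulty beyond the substitution.
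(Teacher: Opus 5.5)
Your argument is correct and is the paper's own: the corollary follows by substituting $\lambda = \frac{r}{n-r}\,d$ from Theorem \ref{LovT} into the bound $q^*\le \lambda/d$ of Theorem \ref{ThModEigen}. Your separate check of the cases with $d<3$ ($n\le 2r$, and $G(3,1,0)=K_3$) is sound and covers the hypothesis $d\ge 3$, which the paper leaves implicit.
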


\begin{corollary}[of Theorem \ref{BrT}]
\label{C5}
Let $(r - s)(n - 1) \geq r(n - r)$. Then
$$
q^{*}(G(n, r, s)) \leq \frac{|sn - r^2|}{r(n - r)}.
$$
\end{corollary}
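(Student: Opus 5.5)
The corollary follows by plugging Theorem \ref{BrT} into Theorem \ref{ThModEigen}. Under the hypothesis $(r - s)(n - 1) \geq r(n - r)$, Theorem \ref{BrT} gives
$$
\lambda(G(n, r, s)) = \frac{|sn - r^2|}{r(n - r)}\, d(G(n, r, s)).
$$
Since $G(n,r,s)$ is $d$-regular with $d = d(G(n,r,s)) = \binom{r}{s}\binom{n-r}{r-s}$, Theorem \ref{ThModEigen} gives $q^*(G) \le \lambda/d$. The factor $d$ cancels, which yields exactly $\frac{|sn - r^2|}{r(n-r)}$.

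The only point to check is the hypothesis $d \geq 3$ of Theorem \ref{ThModEigen}. I would handle the small-degree cases directly, in two steps.

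\begin{enumerate}
\item If $d = 0$, the graph has no edges. Modularity is then degenerate (or conventionally $0$), and the bound is trivially satisfied, since the right-hand side is nonnegative.
\item If $d \in \{1, 2\}$, the graph is a perfect matching or a disjoint union of cycles. One can either compute $q^*$ directly for such graphs, or observe that the spectral argument behind Theorem \ref{ThModEigen} does not really use $d \ge 3$.
\end{enumerate}

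For the second step, that argument writes $q(\mathcal{A}) = \frac{1}{2e(G)}\sum_{A} \mathbf{1}_A^T\bigl(A_G - \frac{d}{|V_G|}J\bigr)\mathbf{1}_A$. It then bounds each term by $\lambda |A|$, because $\mathbf{1}_A$ minus its projection onto $\mathbf{1}$ lies in the span of the nontrivial eigenvectors. Summing over the parts gives at most $\lambda |V_G|/(d|V_G|) = \lambda/d$, and nothing in this uses $d \ge 3$.

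I expect no real obstacle here; the degenerate small-$d$ cases are the only delicate bookkeeping. In the paper's intended reading, the statement is presumably restricted to $d \ge 3$ implicitly, so the proof is essentially the one-line substitution above.
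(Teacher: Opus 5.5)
Your proof is correct and is exactly the paper's one-line argument: substitute $\lambda(G(n,r,s)) = \frac{|sn-r^2|}{r(n-r)}\,d$ from Theorem \ref{BrT} into $q^*(G) \le \lambda/d$ from Theorem \ref{ThModEigen}. Your remark about the hypothesis $d \ge 3$, which the paper passes over silently, is also sound: the projection argument you sketch never uses $d \ge 3$, so the bound holds whenever $d \ge 1$.
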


\begin{corollary}[of Theorem \ref{Th2}]
\label{C2}
$q^{*}(G(4s, 2s, s)) \leq \frac{1}{4s - 2} $ for all big enough $s$.
\end{corollary}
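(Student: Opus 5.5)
The corollary follows by combining Theorem \ref{ThModEigen} with Theorem \ref{Th2}. Theorem \ref{ThModEigen} gives $q^{*}(G) \leq \lambda(G)/d(G)$ for any $d$-regular graph with $d \geq 3$, so it suffices to compute the ratio $\lambda(G(4s,2s,s))/d(G(4s,2s,s))$ and check the degree condition.

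First, I identify the degree. Each vertex of $G(4s,2s,s)$ is adjacent to exactly those vectors that share $s$ of its $2s$ ones and place the remaining $s$ ones among the $2s$ zero coordinates. Hence
$$d(G(4s,2s,s)) = \binom{2s}{s}^2.$$
This coincides with the trivial eigenvalue $E_s(0) = \binom{2s}{s}\binom{2s}{s}$ from Theorem \ref{Eigenvalues}, since only the $j=0$ term survives when $i=0$. In particular $d \geq 3$ already for $s \geq 1$, so Theorem \ref{ThModEigen} applies.

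Second, Theorem \ref{Th2} gives $\lambda(G(4s,2s,s)) = \frac{1}{4s-2}E_s(0)$ for all $s > N$. As a consistency check, $\frac{4s-2}{s^2}\binom{2s-2}{s-1}^2 \cdot (4s-2)$ equals $\binom{2s}{s}^2$, because $\binom{2s}{s} = \frac{2s(2s-1)}{s^2}\binom{2s-2}{s-1}$. Dividing by the degree then gives
$$q^{*}(G(4s,2s,s)) \leq \frac{\lambda}{d} = \frac{1}{4s-2}$$
for all $s > N$, which is exactly the claim. There is no real obstacle here, since all the difficulty lies in Theorem \ref{Th2}. The only care needed is to confirm that $E_s(0)$ is the degree, so that the identity stated in Theorem \ref{Th2} can be used directly.
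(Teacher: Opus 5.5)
Your proposal is correct and is the paper's own argument: combine the spectral bound $q^{*}(G)\le \lambda/d$ of Theorem \ref{ThModEigen} with Theorem \ref{Th2}. Your check that $d(G(4s,2s,s))=\binom{2s}{s}^2=E_s(0)$, which gives $\lambda/d=\frac{1}{4s-2}$, is accurate.
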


\begin{corollary}[of Theorem \ref{ThMerge}]
\label{C1}
Let $r > s \geq 1$. Then $\limsup\limits_{n \to \infty}{q^*(G(n, r, s))} \leq \frac{s}{r}$.
\end{corollary}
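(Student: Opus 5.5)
The plan is to combine Theorem \ref{ThModEigen} with part 1 of Theorem \ref{ThMerge}. Fix $r > s \geq 1$. Part 1 of Theorem \ref{ThMerge} gives an integer $N$ such that
$$\lambda(G(n, r, s)) = E_{r - s}(1) \sim \frac{s}{r}\, d(G(n, r, s))$$
for all $n > N$. This means the ratio $\lambda(G(n,r,s))/d(G(n,r,s))$ tends to $s/r$ as $n \to \infty$.

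Before invoking Theorem \ref{ThModEigen}, I check its hypothesis $d \geq 3$. The degree is $d(G(n,r,s)) = \binom{r}{s}\binom{n-r}{r-s}$. Since $r - s \geq 1$ and $n - r \to \infty$, we have $\binom{n-r}{r-s} \geq n - r$ once $n - r \geq r - s$. Hence $d \geq 3$ for all sufficiently large $n$. The graph is regular by vertex-transitivity, which is the only other assumption of Theorem \ref{ThModEigen}.

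For all large $n$, Theorem \ref{ThModEigen} then gives
$$q^*(G(n,r,s)) \leq \frac{\lambda(G(n,r,s))}{d(G(n,r,s))}.$$
Taking $\limsup$ of both sides and using that the right-hand side converges to $s/r$ yields $\limsup_{n\to\infty} q^*(G(n,r,s)) \leq s/r$.

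No step is a real obstacle once Theorem \ref{ThMerge} is assumed; the substance lies entirely in part 1 of that theorem. The only point that needs care is how to read the asymptotic equivalence. It concerns the fixed-parameter regime: $r$ and $s$ are constants and only $n$ grows, so $\sim$ means exactly that the ratio $\lambda/d$ tends to $s/r$, which is the limit we need. I would also note that the sign of $E_{r-s}(1)$ plays no role, because $\lambda$ is defined as a maximum of absolute values.
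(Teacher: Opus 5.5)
Your proposal is correct and is exactly the paper's argument: the corollary comes from plugging part 1 of Theorem \ref{ThMerge} (so $\lambda/d \to s/r$ for fixed $r > s \geq 1$) into the bound $q^* \leq \lambda/d$ of Theorem \ref{ThModEigen} and taking $\limsup$. One harmless nit: $\binom{n-r}{r-s} \geq n-r$ needs $n-r > r-s$ strictly, since at equality the binomial equals $1$, but you only need $d \geq 3$ for large $n$, so nothing changes.
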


\begin{corollary}[of Theorem \ref{ThMerge}]
\label{C4}
Let $n \gg r(n) \gg s(n)$. Then 
$$
q^*(G(n, r, s)) \to 0, n \to \infty.
$$
\end{corollary}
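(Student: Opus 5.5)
The plan is to combine Theorem \ref{ThModEigen} with the bounds on $\lambda(G(n,r,s))$ from Theorem \ref{ThMerge} (and Theorem \ref{LovT} for $s=0$). It suffices to show that $\lambda(G(n,r,s))/d(G(n,r,s)) \to 0$ as $n\to\infty$, because $q^*(G)\le \lambda/d$.

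First I check that Theorem \ref{ThModEigen} applies, i.e. that $d = d(G(n,r,s)) = \binom{r}{s}\binom{n-r}{r-s} \geq 3$ for all large $n$. Since $r \gg s$, we have $r-s\geq 1$ eventually. Also $n-r\to\infty$, so $\binom{n-r}{r-s}\geq n-r \ge 3$ for large $n$.

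Theorem \ref{ThMerge} splits into regimes according to how $r^2$ compares with $n$, and a general sequence need not stay in one regime. So I will use a subsequence argument. Suppose $\lambda/d \not\to 0$. Then there are $\varepsilon>0$ and a subsequence along which $\lambda/d\geq\varepsilon$. By passing to a further subsequence, we may assume one of the following three cases holds along it:
\begin{enumerate}
\item $s=0$ throughout. Then Theorem \ref{LovT} gives $\lambda/d = r/(n-r)\to 0$, since $n\gg r$.
\item $s\geq 1$ and $r^2/n\to\infty$. Then part 2 of Theorem \ref{ThMerge} gives $\lambda=o(d)$.
\item $s\geq1$ and $r^2/n$ is bounded, i.e. $r=O(\sqrt n)$. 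Then part 3 of Theorem \ref{ThMerge} gives $\lambda = O\bigl(\frac{s}{r}d\bigr)=o(d)$, because $r\gg s$.
\end{enumerate}
Each case contradicts $\lambda/d\ge\varepsilon$, so $\lambda/d\to0$ and hence $q^*\to 0$.

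The only delicate step is applying Theorem \ref{ThMerge}, which is stated for whole sequences indexed by $n$, to a subsequence. The proofs there are asymptotic estimates that hold pointwise for large $n$ under the hypotheses. Alternatively, a subsequence can be extended to a full sequence satisfying the same hypotheses by filling in the gaps, for example with $r=\lfloor n^{3/4}\rfloor$ and $s=\lfloor n^{1/4}\rfloor$ in case 2, or $r=\lfloor\sqrt n\rfloor$ and $s=1$ in case 3. So the subsequence use is legitimate. I expect this bookkeeping, together with remembering the $s=0$ case that part 3 of Theorem \ref{ThMerge} does not cover, to be the main obstacle; otherwise the argument is immediate.
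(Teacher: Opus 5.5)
Your proposal is correct and is essentially the paper's argument: the paper obtains the corollary by combining $q^*\le\lambda/d$ (Theorem~\ref{ThModEigen}) with $\lambda=o(d)$ from parts 2 and 3 of Theorem~\ref{ThMerge}. The paper leaves three things implicit that you supply, and all three are sound: the check that $d\ge 3$, the $s=0$ case handled via Theorem~\ref{LovT}, and the subsequence/gap-filling argument for sequences that mix the regimes $r^2\gg n$ and $r=O(\sqrt n)$.
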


Let us compare this to the previous results on the modularity. In \cite{Item11} the following general bound was introduced.

\begin{theorem}[\cite{Item11}]
\label{P1}
Let $r \geq 2$ and $1 \leq s \leq \left[\frac{r}2\right]$. Then
$$\displaystyle{\limsup_{n \to \infty}q^*(G(n,r,s))} \leq 1 - \frac{\binom{\left[\frac{r}2\right]}{s}}{2\binom{r}{s}}.$$
\end{theorem}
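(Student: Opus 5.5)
The plan is to derive the statement from results already established in this paper, rather than reproducing the combinatorial argument of \cite{Item11}. Under the hypotheses $r \geq 2$ and $1 \leq s \leq \left[\frac{r}{2}\right]$ we have $r > s \geq 1$. Hence Corollary~\ref{C1}, obtained from part~1 of Theorem~\ref{ThMerge} together with Theorem~\ref{ThModEigen}, applies and gives
$$
\limsup_{n\to\infty} q^*(G(n,r,s)) \leq \frac{s}{r}.
$$
Theorem~\ref{ThModEigen} requires $d \geq 3$. This holds for all large $n$, because $d(G(n,r,s)) = \binom{r}{s}\binom{n-r}{r-s}$ grows with $n$ when $r > s$.

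It then suffices to prove the elementary inequality
$$
\frac{s}{r} \leq 1 - \frac{\binom{\left[\frac{r}{2}\right]}{s}}{2\binom{r}{s}},
\qquad\text{equivalently}\qquad
\frac{\binom{\left[\frac{r}{2}\right]}{s}}{\binom{r}{s}} \leq 2\left(1 - \frac{s}{r}\right).
$$
The right-hand side is at least $1$, since $s \leq r/2$. For the left-hand side, write $m = \left[\frac{r}{2}\right]$ and compute
$$
\frac{\binom{m}{s}}{\binom{r}{s}} = \prod_{t=0}^{s-1}\frac{m-t}{r-t}.
$$
Each factor is at most $\frac{1}{2}$, because $2(m-t) \leq r - 2t \leq r - t$. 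So the left-hand side is at most $2^{-s} \leq \frac{1}{2}$ for $s \geq 1$, and the inequality holds with room to spare.

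The only substantive ingredient is part~1 of Theorem~\ref{ThMerge}, namely $\lambda(G(n,r,s)) \sim \frac{s}{r}\,d(G(n,r,s))$ for fixed $r > s \geq 1$, which we take as given. I therefore expect no real obstacle beyond checking the hypotheses of Corollary~\ref{C1} and carrying out the product estimate above. The conclusion is in fact stronger than the statement: the bound $\frac{s}{r}$ improves on $1 - \binom{[r/2]}{s}/\left(2\binom{r}{s}\right)$ for every admissible $(r,s)$.
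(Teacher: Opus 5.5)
Your argument is correct, but note that the paper contains no proof of this statement. It is quoted from \cite{Item11} as earlier work, and the paper only remarks that Corollaries~\ref{C1} and~\ref{C4} ``obviously'' give much stronger bounds. What you have written is the verification behind that remark. It is a different route from the original one in \cite{Item11}, which predates the eigenvalue asymptotics of part~1 of Theorem~\ref{ThMerge} and so cannot rest on them; for fixed $r,s$ and large $n$ the hypothesis of Theorem~\ref{BrT} fails, so no spectral bound was previously available.

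Your route buys a strictly stronger conclusion, $s/r$, which is within a factor of two of the lower bound in Theorem~\ref{P4}. The price is dependence on the full spectral machinery: Theorem~\ref{Eigenvalues}, part~1 of Theorem~\ref{ThMerge}, and the modularity bound of \cite{Rai}. The original argument is self-contained, but by your own estimate $\binom{[r/2]}{s}/\binom{r}{s}\le 2^{-s}$ its constant is never below $\frac34$, while $s/r\le\frac12$.

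The comparison step is easier than you make it. The trivial inequality $\binom{[r/2]}{s}\le\binom{r}{s}$ already gives $1-\binom{[r/2]}{s}/\bigl(2\binom{r}{s}\bigr)\ge\frac12\ge\frac{s}{r}$, which is all the statement needs. Your product estimate is only required if you want strict improvement.
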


It was then improved for some cases in \cite{Der}.

\begin{theorem}[\cite{Der}]
\label{T2}
Let $\varepsilon \in (0, 1), s(n) \geq 1, \sqrt{n} \gg r(n) \geq -\frac1{\ln (1 - \varepsilon)}s^2 + 2s - 1$. Then $$\limsup_{n \to \infty}{q^*(G(n, r, s))} \leq f(\varepsilon),$$ where $f(\varepsilon) = \max\limits_{x \in [0, 1]}{\left(\frac{1 + x - x^2}{2 - x} - \max\left(\frac{x^2 - \varepsilon x}{1 - \varepsilon}, 0\right)\right)}$.
\end{theorem}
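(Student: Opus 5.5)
The plan is to turn a local counting property of $G(n,r,s)$ in the regime $r\ll\sqrt n$ into an upper bound on $e(A)$ for every vertex set $A$, and then optimise that bound over partitions. We work with the normalised quantities $x_A=|A|/|V_G|$ and $\varepsilon_A=e(A)/e(G)$, so that
$$
q(\mathcal A)=\sum_{A\in\mathcal A}\bigl(\varepsilon_A-x_A^2\bigr).
$$
Regularity gives $\varepsilon_A=x_A-e(A,\overline A)/(d|V_G|)$. So it suffices to prove a lower bound on $e(A,\overline A)$ as a function of $x_A$ alone, i.e.\ an edge-isoperimetric inequality. The shape of $f$ suggests such a bound, with one term valid for all sets and a second term, $\max\bigl(\frac{x^2-\varepsilon x}{1-\varepsilon},0\bigr)$, that becomes active once a part is larger than an $\varepsilon$-fraction.

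\textbf{Step 1: local structure.} For $r=o(\sqrt n)$, almost all pairs of vertices at distance two behave like a typical pair. I would count walks of length two from a vertex $v$ through a neighbour $u$ to a second-neighbour $w$. The key computation: for two neighbours $u,w$ of $v$, estimate the probability that their supports meet $v$'s support in a way that makes $u$ and $w$ again at inner product $s$, or at least close. Choosing the $s$ coordinates kept from $v$ independently, two random $s$-subsets of an $r$-set are disjoint with probability about $(1-s/r)^{s}\approx e^{-s^2/r}$. The hypothesis $r\ge -\frac{s^2}{\ln(1-\varepsilon)}+2s-1$ is exactly what makes this probability at least $1-\varepsilon$ uniformly; the $+2s-1$ absorbs the correction from $\binom{r-s}{s}/\binom{r}{s}$. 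So we obtain a statement of the form: a $(1-\varepsilon)$-fraction of the two-step neighbourhood structure is ``spread'', i.e.\ the walk $v\to u\to w$ lands near-uniformly on a set much larger than the neighbourhood. This is where $n\gg r^2$ enters, to make the binomial ratios $1+o(1)$.

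\textbf{Step 2: isoperimetric bound.} Using the two-step structure, I would apply the Cauchy--Schwarz / second-moment argument to $\mathbf 1_A$ under the operator $A_G^2$ restricted to the spread part. This should give, for every set $A$ of relative size $x$,
$$
\varepsilon_A\le x\cdot h(x)+o(1),
$$
with $h$ producing the two terms of $f$. The generic term should yield $\frac{1+x-x^2}{2-x}$-type behaviour. I would try to derive it by splitting the edges inside $A$ into those whose endpoints share a ``typical'' common neighbourhood and the rest. The $\varepsilon$-dependent penalty should come from the fact that at most an $\varepsilon$-fraction of two-paths are exceptional.

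\textbf{Step 3: optimisation over partitions.} Sum $\varepsilon_A-x_A^2$ over parts and show that the resulting function of the vector $(x_A)$ is maximised when at most one part is large (size $x$) and the remaining mass is cut into parts of size $o(1)$. Small parts contribute nothing beyond the generic bound. This reduces $q^*$ to $\max_{x\in[0,1]}$ of the single-variable expression, giving $f(\varepsilon)$; letting $n\to\infty$ then kills the $o(1)$ terms.

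I expect the main obstacle to be Step 2: extracting exactly the rational function $\frac{1+x-x^2}{2-x}$ from a two-step counting argument with the right constants. I would first try a fractional double-counting of paths $v\!-\!u\!-\!w$ with $v,w\in A$, weighting by whether $u\in A$, and solve the resulting linear inequality in $e(A)$ and $e(A,\overline A)$. If that gives a weaker constant, I would refine the classification of two-paths by the intersection pattern on the $s$ shared coordinates.
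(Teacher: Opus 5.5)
The paper does not prove this statement. It is quoted from \cite{Der}, and the paper only remarks that Corollaries~\ref{C1} and~\ref{C4} are stronger. So your proposal has to stand on its own, and as written it has a genuine gap: the inequality that is supposed to carry the argument is never stated. Step~2 only says the second-moment argument ``should give'' $e(A)/e(G)\le x\,h(x)+o(1)$ for an unspecified $h$, and the plan aims at reproducing $\frac{1+x-x^2}{2-x}$ exactly, with $x$ read as the size of one large part. That target is unnecessary: the claim is only an upper bound, and evaluating at $x=0$ and $x=\varepsilon$ gives $f(\varepsilon)\ge\max\{\frac12,\frac{1+\varepsilon-\varepsilon^2}{2-\varepsilon}\}\ge\max\{\frac12,\varepsilon\}$. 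The reading is also probably wrong. The penalty $\frac{x^2-\varepsilon x}{1-\varepsilon}$ is exactly the lower bound $\tau\ge\frac{M^2-\varepsilon M}{1-\varepsilon}-o(1)$ on the degree tax $\tau$, in terms of the edge contribution $M$, that you get by summing the display below with weights $x_A$. So the $x$ in $f$ behaves like an edge contribution, not a part size. Step~3 is unjustified as well. From any per-set bound $e(A)/e(G)\le x_Ah(x_A)$ you get directly $q(\mathcal A)\le\sum_A x_A\bigl(h(x_A)-x_A\bigr)\le\sup_y\bigl(h(y)-y\bigr)$. For the $h$ the argument actually produces, the extremal configuration is many parts of one optimal size, not one big part plus dust, and small parts do contribute (about $h(0)x_A>0$). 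In Step~1, the relevant dichotomy for a path $v-u-w$ is whether $u\cap v$ and $u\cap w$ meet; when they do not, the endpoints $v,w$ are typically disjoint. What you need is not spreading over ``a set much larger than the neighbourhood''. You need that, on this event, the law of $w$ given $v$ is within total variation $O(r^2/n)$ of uniform on all of $V_G$, uniformly in $v$. That is the precise form in which $r\ll\sqrt n$ enters.

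The missing step is as follows.
\begin{itemize}
\item \emph{Setup.} Let $N=|V_G|$, take $u$ uniform and $v,w$ independent uniform neighbours of $u$, and let $L=\{(u\cap v)\cap(u\cap w)\neq\varnothing\}$. Given $(v,u)$, $L$ has probability exactly $\varepsilon'=1-\binom{r-s}{s}/\binom{r}{s}$. Moreover $\varepsilon'\le\varepsilon$, since $\binom{r-s}{s}/\binom{r}{s}\ge(1-\frac{s}{r-s+1})^s\ge e^{-s^2/(r-2s+1)}\ge1-\varepsilon$ by the hypothesis.
\item \emph{Per-part notation.} For a part $A$ put $x=|A|/N$ and $\bar\beta=2e(A)/(d|A|)$, so that $e(A)/e(G)=x\bar\beta$.
\item \emph{Lower bound.} By Cauchy--Schwarz, $x\bar\beta^2\le\mathbf P(v,u,w\in A)$.
\item \emph{Contribution of $L$.} It is at most $\varepsilon'\mathbf P(v,u\in A)=\varepsilon'x\bar\beta$.
\item \emph{Contribution of $L^c$.} Drop $u\in A$ and use near-uniformity of $w$ to bound it by $(1-\varepsilon')x(x+o(1))$.
\end{itemize}
Hence
\[
\bar\beta_A^2\le(1-\varepsilon')x_A+\varepsilon'\bar\beta_A+o(1)\qquad\text{uniformly in }A .
\]
This gives $\bar\beta_A-x_A\le\min\bigl\{\bar\beta_A,\frac{\bar\beta_A(1-\bar\beta_A)}{1-\varepsilon'}\bigr\}+o(1)\le\max\{\frac12,\varepsilon\}+o(1)$. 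Therefore $q(\mathcal A)=\sum_A x_A(\bar\beta_A-x_A)\le\max\{\frac12,\varepsilon\}+o(1)\le f(\varepsilon)+o(1)$ for every partition $\mathcal A$.

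Even the cruder version suffices. Bound $\mathbf P(v,w\in A)\ge x\bar\beta^2$ directly; this is literally $\mathbf 1_A^{T}A_G^2\mathbf 1_A$ split into its spread and exceptional parts. It gives $\bar\beta_A^2\le(1-\varepsilon)x_A+\varepsilon+o(1)$, hence a bound $\max\{\frac12,\sqrt\varepsilon\}+o(1)$, and this is enough because $\sqrt\varepsilon\le\frac{1+\varepsilon-\varepsilon^2}{2-\varepsilon}$. So your underlying idea is the right one. Until this inequality and the correct optimisation step are written down, however, the proposal is a heuristic, not a proof.
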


It is obvious that Corollaries \ref{C1} and \ref{C4} give us much stronger upper bounds on the modularity than both of these theorems.

It is also interesting to compare this new upper bound with the lower bound for constant $r$ and $s$. The best known lower bound was proved in \cite{Item17a}.

\begin{theorem}[\cite{Item17a}]
\label{P4}
Let $r > s \geq 1$. Then 
$$\liminf_{n \to \infty}{q^*(G(n, r, s))} \geq \frac{s}{2r - s}\left(1 + \left(\frac{r - s}{r}\right)^{\frac{2r}{s}}\right).$$
\end{theorem}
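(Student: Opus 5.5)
This bound is taken from \cite{Item17a}. The natural proof is constructive: since $q^*$ is a maximum over partitions, it suffices to exhibit, for each large $n$, one explicit partition of the vertex set of $G(n,r,s)$ whose modularity tends to at least the right-hand side.

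The basic construction I would use is \emph{partition by the minimal element}. Identify vertices with $r$-subsets of $[n]$ and put $P_x=\{v : \min v = x\}$. Two adjacent vertices $u,v$ satisfy $|u\cup v| = 2r-s$ and $|u\cap v| = s$. By symmetry, the minimum of $u\cup v$ lies in $u\cap v$ with probability exactly $\frac{s}{2r-s}$ over all edges, and in that case $\min u=\min v$. Hence the edge contribution is at least $\frac{s}{2r-s}$. For the degree tax, $|P_x|/\binom nr=\binom{n-x}{r-1}/\binom nr\approx \frac rn(1-x/n)^{r-1}$, so $\sum_x |P_x|^2/\binom nr^2=O(1/n)\to0$ because $r$ is constant. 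This already gives the main term $\frac{s}{2r-s}$.

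To capture the correction factor $1+\bigl(\frac{r-s}{r}\bigr)^{2r/s}$, I would coarsen the partition so that it also collects edges whose two endpoints have \emph{different} minima. My first attempt is to group the minima into consecutive blocks of $[n]$. The aim is that, after rescaling $[n]$ to $[0,1]$, an edge is counted whenever $\min u$ and $\min v$ fall in the same block, and these cases are chosen so that the degree tax still vanishes. Concretely, I would pass to the continuous limit: an edge becomes $2r-s$ i.i.d. uniform points, split into $s$ common points, $r-s$ private points of $u$ and $r-s$ private points of $v$. The edge contribution of a partition of $[0,1]$ then becomes an explicit integral involving $\min$ of these groups. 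The degree tax becomes $\sum_B \mu(B)^2$, where $\mu$ has density $r(1-x)^{r-1}$. To make the tax negligible while still gaining, the blocks must have vanishing $\mu$-mass. So the more promising route is probably a different coarsening that still has many small parts. For instance, attach each vertex to the part indexed by its smallest element among a random set $S\subset[n]$ of density $\alpha$, with a suitable fallback rule for vertices missing $S$. Then optimize $\alpha$, which I expect to be $\alpha=s/r$-type scaling, producing the power $\bigl(\frac{r-s}{r}\bigr)^{2r/s}$.

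The key steps, in order, are as follows. First, define the parametrized partition. Second, compute the limiting probability that a uniformly random edge lies inside a part; this is an exact finite-$n$ count of configurations of $u\cup v$, followed by $n\to\infty$. Third, show the degree tax is $o(1)$ because every part has $o(\binom nr)$ vertices. Fourth, optimize the parameter. I expect the main obstacle to be the second step: identifying the right coarsening and evaluating the resulting edge probability in closed form so that it equals $\frac{s}{2r-s}\bigl(1+(\frac{r-s}{r})^{2r/s}\bigr)$. The tax estimate and the main term are routine.
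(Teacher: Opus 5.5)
Your first step is correct: the partition by minimal element has edge contribution exactly $\frac{s}{2r-s}$ and degree tax at most $\binom{n-1}{r-1}/\binom{n}{r}=\frac rn$. However, that is all the proposal proves. It never exhibits a partition producing the factor $1+\bigl(\frac{r-s}{r}\bigr)^{2r/s}$. Worse, the principle guiding your search (all parts must have vanishing mass so that the tax is negligible) excludes the construction that actually gives this bound. The paper itself only cites the result from \cite{Item17a}.

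The missing idea is to pay a non-vanishing tax for one giant part. Fix $t\in(0,1)$ and let $W$ be the last $w=\lfloor tn\rfloor$ elements of $[n]$. Make $B=\{v: v\subseteq W\}$ a single part, and split every other vertex by its minimum, $P_x=\{v:\min v=x\}$ for $x\le n-w$. Then an edge $uv$ lies inside a part if and only if either $u\cup v\subseteq W$, or $u\cup v\not\subseteq W$ and $\min(u\cup v)\in u\cap v$. The first event has probability $\binom{w}{2r-s}/\binom{n}{2r-s}\to t^{2r-s}$. For the second, the split of $u\cup v$ into common and private elements is uniform given the set $u\cup v$, so your symmetry argument gives conditional probability exactly $\frac{s}{2r-s}$. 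The tax is $\bigl(\binom{w}{r}/\binom{n}{r}\bigr)^2+O(\frac1n)\to t^{2r}$. Hence
\[
\liminf_{n\to\infty}q^*(G(n,r,s))\ \ge\ \frac{s}{2r-s}+\frac{2(r-s)}{2r-s}\,t^{2r-s}-t^{2r}.
\]
This is maximized at $t^s=\frac{r-s}{r}$. There $t^{2r-s}=\frac{r}{r-s}\,t^{2r}$, and the right-hand side equals exactly $\frac{s}{2r-s}\bigl(1+\bigl(\frac{r-s}{r}\bigr)^{2r/s}\bigr)$. As a check, for $r=2$, $s=1$ the bound reads $\frac13+\frac23t^3-t^4$, maximized at $t=\frac12$ with value $\frac{17}{48}$. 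These are exactly the leading terms of the formula in Theorem~\ref{P2} with $w=\lceil n/2\rceil+1$.

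Your random-set scheme works only with the fallback ``all vertices disjoint from $S$ form one part''. It is then the construction above with $W=[n]\setminus S$ and $t=1-\alpha$. So the optimal density satisfies $(1-\alpha)^s=1-\frac sr$, which agrees with $\alpha=\frac sr$ only when $s=1$. If instead the fallback keeps parts small, e.g.\ by splitting those vertices by their minimum, the whole partition is just the minimal-element partition for a reordering of $[n]$ (elements of $S$ first). Its edge contribution is again exactly $\frac{s}{2r-s}$, with no gain. So the step you flag as the main obstacle is not a hard evaluation but the missing construction itself. Once the giant part is in place, the computation is the short one above.
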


One can see that the new upper bound is no more than two times bigger when compared with the lower bound, as 
$$
\frac{s}{2r - s}\left(1 + \left(\frac{r - s}{r}\right)^{\frac{2r}{s}}\right) \geq \frac{s}{2r-s} \geq \frac{s}{2r}. 
$$

So we see that the spectral approach allows us to introduce good bounds on the modularity of $G(n,r,s)$ graphs. The only case when this approach does not give us anything new is the case of $r = 2, s = 1$. The modularity in this case was calculated precisely in \cite{Item9a}. We state this theorem here for the sake of completeness.

\begin{theorem}[\cite{Item9a}]
\label{P2}
\begin{equation*}
    q^*(G(n, 2, 1)) = \frac13 + \frac{2w(w-1)(w-2)}{3n(n-1)(n-2)} - \frac{w^2(w-1)^2}{n^2(n-1)^2} - \frac{4n - 2}{3n(n-1)} + \frac{w(w-1)(4w - 2)}{3n^2(n-1)^2}
\end{equation*}
for all $n\geq 5$, where $w = \lceil \frac{n}{2} \rceil + 1$. The limit of the right hand side as $n\to \infty$ equals $\frac{17}{48}$.
\end{theorem}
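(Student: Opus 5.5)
The plan is to work in the line-graph picture. Identify $G(n,2,1)$ with the line graph $L(K_n)$: its vertices are the $N=\binom n2$ edges of $K_n$, two of them adjacent when they share an endpoint. The graph is $2(n-2)$-regular and $e(G)=n(n-1)(n-2)/2$. By the Remark on degree tax for regular graphs,
$$
q(\mathcal A)=\sum_{A\in\mathcal A}\Bigl(\frac{e(A)}{e(G)}-\frac{|A|^2}{N^2}\Bigr).
$$

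\emph{Step 1: structure of an optimal partition.} Since $L(K_n)$ is a line graph, its maximal cliques are the stars $S_v$ (all edges at a vertex $v$) and the triangles. I would show by a local exchange argument that some optimal partition has every part either a sub-star or a triangle. Take a part $A$ whose edge set, viewed as a subgraph $H\subseteq K_n$, is neither. Then $e(A)=\sum_v\binom{\deg_H v}{2}$. Splitting $A$ according to an orientation of $H$ (each edge assigned to one endpoint) loses at most the pairs counted at the non-chosen endpoint. It also strictly decreases the degree tax $|A|^2/N^2$. A careful choice of orientation, for example one maximizing $\sum\binom{\text{indeg}}{2}$, should make $q$ not decrease. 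This is the step I expect to be the main obstacle. The edge loss and the tax gain are of comparable orders ($1/n^3$ per edge pair versus $|A|^2/n^4$), so the inequality has to be checked carefully, possibly using an averaging argument over orientations.

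\emph{Step 2: optimizing over the star sizes.} Once the parts are sub-stars and triangles, a sub-star of size $a$ contributes
$$
f(a)=\frac{\binom a2}{e(G)}-\frac{a^2}{N^2},
$$
and a triangle contributes $3/e(G)-9/N^2$. The function $f$ is convex in $a$, because $1/(2e(G))\sim 1/n^3$ dominates $1/N^2\sim 4/n^4$. Hence star sizes should be as unequal as possible. Orienting $K_n$ so that vertex $i$ takes all edges to later vertices gives sizes $n-1,n-2,\dots$; this is the transitive orientation, and by Landau-type majorization it is extremal. However, small stars are worse than triangles: a star of size $a$ gives $(a-1)/2$ inner edges per vertex, while a triangle gives $1$. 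The degree tax also shifts the break-even point. So the optimal partition should consist of the stars of the first $n-w$ vertices in the transitive order, with the remaining $K_w$ split into triangles or small parts. I would write $q$ as an explicit function of the cut-off $w$, compute the discrete derivative, and show it changes sign at $w=\lceil n/2\rceil+1$. Substituting this $w$ and summing $\sum_{k}\binom k2$ and $\sum_k k^2$ over the star sizes $n-1,\dots,w$ should produce the stated closed form. The terms involving $w(w-1)(w-2)$ and $w^2(w-1)^2$ come from these power sums.

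\emph{Step 3: the remaining $K_w$ and the limit.} I would check that the leftover $K_w$ contributes exactly the correction terms in the formula, namely $-\frac{4n-2}{3n(n-1)}$ and the last term. This requires checking whether the $K_w$ part is best handled by further stars or by a near-Steiner triple decomposition, and verifying that divisibility issues cost only lower-order terms; this is also why the restriction $n\ge 5$ is needed. Finally, with $w\sim n/2$ the formula becomes $\frac13+\frac2{3}\cdot\frac18-\frac1{16}+o(1)=\frac{17}{48}+o(1)$, which gives the stated limit.
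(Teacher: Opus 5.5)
The paper gives no proof of this statement; it is quoted from \cite{Item9a}. Judged on its own, your plan has a genuine gap. The structural claim of Step 1, that some optimal partition consists of sub-stars and triangles, is false, and the partition in Step 2 does not attain the formula.

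To see this, decompose the right-hand side. Fix $W$ with $|W|=w$. Let one part be all $\binom{w}{2}$ edges of $K_n$ inside $W$, and cover the remaining edges by nested stars at the other $n-w$ vertices, of sizes $n-1,n-2,\dots,w$. The $W$-part spans $w\binom{w-1}{2}=3\binom{w}{3}$ edges of $G(n,2,1)$, and the stars span $\binom{n}{3}-\binom{w}{3}$. So the edge contribution is $\frac13+\frac{2\binom{w}{3}}{3\binom{n}{3}}$, and the degree tax is $\bigl(\binom{w}{2}^2+\sum_{k=w}^{n-1}k^2\bigr)/\binom{n}{2}^2$. Expanding gives exactly the five stated terms. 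Thus the extremal partition has a part of size about $n^2/8$ inducing $L(K_w)$. Since $w\ge 4$, this part is not even a clique of $L(K_n)$. Your attribution of terms is also reversed: $w(w-1)(w-2)$ and $w^2(w-1)^2$ come from this clique part, whereas $-\frac{4n-2}{3n(n-1)}$ and the last term are the stars' degree tax.

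This is where your exchange argument fails. Splitting the $W$-part along any orientation keeps at most $\binom{w}{3}$ of its $3\binom{w}{3}$ internal adjacencies, because tournament score sequences are majorized by $(w-1,\dots,1,0)$. That loses $\frac{2\binom{w}{3}}{3\binom{n}{3}}\to\frac1{12}$ of edge contribution while saving at most $\binom{w}{2}^2/\binom{n}{2}^2\to\frac1{16}$ of tax. Globally, any partition into sub-stars and triangles has at most $\binom{n}{3}+\binom{n}{2}$ internal edges, so $q\le\frac13+\frac1{n-2}$, which is below the stated value for large $n$.

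Your explanation of the cut-off is also wrong. A star of size $a$ has $(a-1)/2$ internal adjacencies per element, against $1$ for a triangle, so triangles beat only stars of size at most $2$. Stars plus a triangulated $K_w$ give about $\frac13-\frac1{24}$, which is less than the all-star partition. The true trade-off is the clique part versus stars. With $x=w/n$, the clique part gains $\frac23x^3$ in edge contribution and pays $x^4$ in tax; this is maximized at $x=\frac12$, with net gain $\frac1{48}$.

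An upper-bound proof therefore has to treat parts that are arbitrary subgraphs $H\subseteq K_n$, with $e(A)=\sum_v\binom{\deg_H v}{2}$. A natural extremal input is the Ahlswede--Katona theorem: for a fixed number of edges this sum is maximized by a quasi-star or a quasi-clique. Clique-like parts must be allowed from the outset rather than exchanged away.
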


\subsection{Stability of modularity}

In this section we will use the denotation $q(\mathcal{A}, p)$ for the modularity of a partition $\mathcal{A}$ as a partition of $G(p)$.

First of all we prove that modularity of a $d$-regular graph is stable in some sense if $\frac{\lambda}{d} \leq c < 1$. 

\begin{theorem}
\label{ProbMain}
Let $S$ be an infinite subset of $\mathbb{N}$. Let $\{G_n\}_{n \in S}$ be a sequence of $(n, d(n), \lambda(n))$-graphs with $\limsup_{n \to \infty} \frac{\lambda(n)}{d(n)} < 1$ and let $p = p(n) \gg \frac1{\sqrt{d}}$. Consider a sequence of random graphs $\{G_{n}(p)\}$. Then
$$
\mathbf{P}\left(\limsup_{n \to \infty} |q^*(G_{n}) - q^{*}(G_{n}(p))| = 0\right) = 1.
$$
\end{theorem}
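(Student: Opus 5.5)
The plan is to prove that, almost surely for all large $n$,
$$
\sup_{\mathcal{A}} |q(\mathcal{A}) - q(\mathcal{A}, p)| \to 0,
$$
with the supremum over all partitions $\mathcal{A}$ of $V_{G_n}$. This immediately gives $|q^*(G_n) - q^*(G_n(p))| \to 0$. The difficulty is that there are up to $n^n$ partitions, so a union bound over partitions is hopeless. I would therefore reduce everything to a bounded number of \emph{global} random quantities, each controlled with exponentially small failure probability.

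\textbf{Edge contribution.} Write $A$ for the adjacency matrix of $G_n$, $A_p$ for that of $G_n(p)$, and $M = A_p - pA$. For a partition $\{A_1,\dots,A_k\}$ with indicator vectors $x_i$ we have
$$
\sum_i \bigl(e(A_i,p) - p\,e(A_i)\bigr) = \tfrac12 \langle M, X\rangle, \qquad X = \sum_i x_i x_i^T .
$$
Here $X$ is positive semidefinite with unit diagonal, and $\langle\cdot,\cdot\rangle$ is the trace inner product $\langle M,X\rangle = \sum_{u,v} M_{uv}X_{uv}$. By Grothendieck's inequality, $|\langle M, X\rangle| \le K_G \|M\|_{\infty\to 1} \le 4K_G \max_{S,T}|e(S,T,p) - p\,e(S,T)|$. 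For fixed $S,T$ the quantity $e(S,T,p)$ is a sum of at most $dn$ independent indicators, so Chernoff gives a deviation larger than $t$ with probability $\exp(-\Omega(t^2/(pdn)))$ for $t \le pdn$. Taking $t = \varepsilon p d n$ and a union bound over the $4^n$ pairs $(S,T)$, the failure probability is at most $4^n e^{-\Omega(\varepsilon^2 p d n)}$. This is summable in $n$ because $pd \gg \sqrt d \to \infty$. Since $e(G_n(p)) = (1+o(1))p\,e(G_n)$ with a similar exponential bound, the edge contributions of $\mathcal{A}$ in $G_n$ and $G_n(p)$ differ by $O(\varepsilon)$ uniformly in $\mathcal{A}$.

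\textbf{Degree tax.} Put $\mathrm{vol}_p(A_i) = \sum_{v\in A_i}\deg(v,p)$ and $D = \sum_v |\deg(v,p) - pd|$. Then $\sum_i |\mathrm{vol}_p(A_i) - pd|A_i|| \le D$. Since each $\mathrm{vol}_p(A_i)$ and $pd|A_i|$ is at most $O(pdn)$, we get
$$
\Bigl|\sum_i \mathrm{vol}_p(A_i)^2 - \sum_i (pd|A_i|)^2\Bigr| = O(pdn \cdot D),
$$
uniformly over partitions. Dividing by $4e(G_n(p))^2 \approx (pdn)^2$ shows the degree taxes differ by $O(D/(pdn))$. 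Each $\deg(v,p) \sim \mathrm{Bin}(d,p)$, so $\mathbf{E}D \le n\sqrt{pd} = o(pdn)$. Moreover $D$ is $2$-Lipschitz in the $dn/2$ independent edge variables, so McDiarmid's inequality gives $\mathbf{P}(D > \mathbf{E}D + \varepsilon pdn) \le \exp(-\Omega(\varepsilon^2 p^2 d n))$. This is exactly where the hypothesis $p \gg 1/\sqrt d$ enters: it makes $p^2 d \to \infty$, so the bound is summable over $n \in S$.

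\textbf{Conclusion and main obstacle.} Combining the two estimates, for every fixed $\varepsilon > 0$ we get $\sum_n \mathbf{P}\bigl(\sup_{\mathcal{A}}|q(\mathcal{A}) - q(\mathcal{A},p)| > C\varepsilon\bigr) < \infty$. Borel--Cantelli, applied over rational $\varepsilon$, then gives the almost sure statement. The hypothesis $\limsup \lambda/d < 1$ seems to matter only for ensuring $d \to \infty$ (connectedness and non-degeneracy of the sequence). I would use it only there, and check whether the argument needs it beyond guaranteeing $d\to\infty$. The main obstacle is the uniformity over partitions with arbitrarily many parts in the edge-contribution term. Naive cut bounds per part accumulate error linearly in $k$, and the Grothendieck step is what removes this dependence. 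If that route fails, I would fall back on the fact that merging parts of near-optimal partitions allows restricting to $O(1/\varepsilon)$ parts, at a cost of $\varepsilon$ in modularity, and then apply the cut bound directly.
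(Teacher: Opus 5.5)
Your proof is correct, and it takes a genuinely different route from the paper.

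\textbf{The paper's route.} The paper never controls all partitions at once. It first applies Construction~\ref{algorithm} to merge small parts, at a cost of $O(1/k)$ in modularity, so that only parts of size roughly between $n/k$ and $(k-1)n/k$ remain. It then takes a union bound over the $2^n$ vertex sets and uses Hoeffding to get \emph{relative} concentration of $e(V,\overline V,p)$ (Lemma~\ref{LeA}) and of volumes (Lemma~\ref{Ldeg}). The inequality $q^*(G_n)\le q^*(G_n(p))+\varepsilon$ and its converse are proved separately, with bookkeeping for the leftover set $U$. Relative concentration of cuts requires $e(V,\overline V)=\Omega(dn)$, and this is the only place the hypothesis $\limsup\lambda/d<1$ is used, via Lemma~\ref{Eig}.

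\textbf{Your route.} You prove closeness uniformly over all partitions at once. Writing the edge-contribution error as $\tfrac12\langle M,X\rangle$ reduces it to the cut norm of $M=A_p-pA$. The degree tax is then controlled by the single global quantity $D$. Because your error is absolute ($\varepsilon pdn$) rather than relative, you need no lower bound on cuts and no merging step, and both inequalities come out together.

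\textbf{Two simplifications and a correction.}
\begin{itemize}
\item Grothendieck is more than you need. Put independent uniform signs $\sigma_i$ on the parts; then $X_{uv}=\mathbf{E}\,\sigma_{c(u)}\sigma_{c(v)}$, which gives $|\langle M,X\rangle|\le\|M\|_{\infty\to1}$ with constant $1$.
\item Your closing remark is off. The condition $\limsup\lambda/d<1$ does not force $d\to\infty$; bounded-degree expanders satisfy it. What forces $d\to\infty$ is $p\le 1$ together with $p\sqrt d\to\infty$. So your argument never uses the spectral hypothesis. It proves the theorem for arbitrary sequences of $d$-regular graphs with $p\gg 1/\sqrt d$, and the edge-contribution part needs only $pd\to\infty$.
\item A minor point: $e(S,T,p)$ is not a sum of independent indicators, since an edge inside $S\cap T$ contributes $2X_e$. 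Bernstein's inequality with coefficients in $\{0,1,2\}$ still gives the same $\exp(-\Omega(\varepsilon^2 pdn))$ bound.
\end{itemize}
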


Note that by combining this theorem with Theorem \ref{ThModEigen} we can obtain the following bound on the modularity of $(n, d,\lambda)$-graphs.

\begin{theorem}
\label{Pr1}
Let $S$ be an infinite subset of $\mathbb{N}$. Let $\{G_n\}_{n \in S}$ be a sequence of $(n, d(n), \lambda(n))$-graphs with $\limsup\limits_{n \to \infty} \frac{\lambda(n)}{d(n)} \leq c$ and let $p = p(n) \gg \frac1{\sqrt{d}}$. Consider a sequence of random graphs $\{G_{n}(p)\}$. Then $$\mathbf{P}\left(\limsup\limits_{n \to \infty} q^{*}(G_n(p)) \leq c\right) = 1.$$
\end{theorem}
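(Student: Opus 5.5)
The plan is to combine Theorem \ref{ProbMain} with Theorem \ref{ThModEigen}, distinguishing whether $c<1$ or $c\ge 1$.

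\emph{The case $c \geq 1$.} Here there is nothing to prove. Modularity of any graph is always strictly less than $1$, since the edge contribution is at most $1$ and the degree tax is positive. Hence $q^*(G_n(p)) \le 1 \le c$ deterministically. The same holds if $G_n(p)$ happens to have no edges, under the usual convention $q^*=0$ in that case.

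\emph{The case $c<1$.} Now $\limsup \lambda(n)/d(n) \le c < 1$, so the hypotheses of Theorem \ref{ProbMain} hold. It gives that, almost surely,
$$\limsup_{n\to\infty}|q^*(G_n)-q^*(G_n(p))| = 0.$$
On this event,
$$\limsup_{n \to \infty} q^*(G_n(p)) \le \limsup_{n \to \infty} q^*(G_n) + \limsup_{n \to \infty}|q^*(G_n(p))-q^*(G_n)| = \limsup_{n \to \infty} q^*(G_n).$$

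\emph{Bounding $q^*(G_n)$.} By Theorem \ref{ThModEigen}, $q^*(G_n)\le \lambda(n)/d(n)$ whenever $d(n)\ge 3$. The condition $p\gg 1/\sqrt d$ with $p\le 1$ forces $d(n)\to\infty$, so $d(n)\ge 3$ for all large $n$. Therefore
$$\limsup_{n\to\infty} q^*(G_n)\le \limsup_{n\to\infty} \frac{\lambda(n)}{d(n)}\le c.$$
Combining this with the previous display gives the claim with probability $1$.

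The only mild subtlety is the degenerate regime $c\ge1$, where Theorem \ref{ProbMain} does not apply; the trivial bound $q^*\le 1$ handles it. No step is a genuine obstacle, since all the work is done by Theorem \ref{ProbMain}.
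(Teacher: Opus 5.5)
Your proposal is correct and is the same argument as the paper, which obtains this theorem by combining Theorem \ref{ProbMain} with the spectral bound $q^*(G)\le \lambda/d$ of Theorem \ref{ThModEigen}. You also supply two details the paper leaves implicit. First, you treat the case $c\ge 1$ separately, where the strict hypothesis $\limsup \lambda/d<1$ of Theorem \ref{ProbMain} may fail but $q^*\le 1$ suffices. Second, you observe that $p\gg 1/\sqrt{d}$ with $p\le 1$ forces $d\to\infty$, so the condition $d\ge 3$ holds for large $n$.
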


In the proof we use the following classical inequality proved in \cite{Hef}.

\begin{lemma}[Hoeffding]\label{Hf}
    \text{ } \\
    Let $X_1, \dotsc, X_m$ be independent random variables such that for each $i$ we have $\mathbf{P}(X_i \in [a_i, b_i]) = 1$ for some $a_i$ and $b_i$. Let $S_{m} = X_1 + \dotsc + X_m$. Then the following inequality holds.
    $$
    \mathbf{P}(|S_{m} - \mathbf{E}S_m| \geq \varepsilon m) < 2\exp\left(-\frac{2\varepsilon^2m^2}{\sum\limits_{i = 1}^{m}(b_i - a_i)^2}\right).
    $$
\end{lemma}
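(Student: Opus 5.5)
The plan is the standard Chernoff--Cramér argument: bound the moment generating function of each centred summand, apply Markov's inequality to the exponential, and optimise the free parameter. Put $Y_i = X_i - \mathbf{E}X_i$. Each $Y_i$ has mean $0$ and lies almost surely in $[a_i - \mathbf{E}X_i,\, b_i - \mathbf{E}X_i]$, an interval of length $b_i - a_i$. The degenerate case $\sum_i (b_i-a_i)^2 = 0$ is handled first: then every $X_i$ is almost surely constant, the event has probability $0$, and the claim is trivial. From now on assume this sum is positive.

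\emph{Step 1 (Hoeffding's lemma).} I will show that if $Y$ has mean zero and $Y \in [\alpha,\beta]$ almost surely, then for every $t \in \mathbb{R}$
$$
\mathbf{E}e^{tY} \leq \exp\left(\frac{t^2(\beta-\alpha)^2}{8}\right).
$$
Convexity of $y \mapsto e^{ty}$ on $[\alpha,\beta]$ gives $e^{ty} \leq \frac{\beta-y}{\beta-\alpha}e^{t\alpha} + \frac{y-\alpha}{\beta-\alpha}e^{t\beta}$. Taking expectations and using $\mathbf{E}Y=0$ yields $\mathbf{E}e^{tY} \leq e^{\varphi(u)}$, where $u = t(\beta-\alpha)$, $\theta = -\alpha/(\beta-\alpha) \in [0,1]$, and
$$
\varphi(u) = -\theta u + \ln(1-\theta+\theta e^{u}).
$$
One checks directly that $\varphi(0)=0$ and $\varphi'(0)=0$. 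Moreover $\varphi''(u) = \rho(1-\rho)$ with $\rho = \theta e^u/(1-\theta+\theta e^u) \in [0,1]$, so $\varphi''(u) \leq 1/4$. Taylor's theorem then gives $\varphi(u) \leq u^2/8$, which is the claimed bound.

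\emph{Step 2 (Chernoff bound for the upper tail).} By independence and Step 1, for every $t>0$
$$
\mathbf{P}(S_m - \mathbf{E}S_m \geq \varepsilon m) \leq e^{-t\varepsilon m}\prod_{i=1}^m \mathbf{E}e^{tY_i} \leq \exp\left(-t\varepsilon m + \frac{t^2}{8}\sum_{i=1}^m (b_i-a_i)^2\right).
$$
Choosing $t = 4\varepsilon m / \sum_i (b_i-a_i)^2$ makes the right-hand side equal to $\exp\bigl(-2\varepsilon^2 m^2 / \sum_i (b_i-a_i)^2\bigr)$.

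\emph{Step 3 (lower tail and union bound).} Applying Step 2 to the variables $-X_i$, which lie in intervals of the same lengths, gives the identical bound for the lower tail. A union bound over the two tails produces the factor $2$.

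I do not expect a real obstacle; the one delicate computation is the bound $\varphi'' \leq 1/4$ in Step 1. The one point that needs a remark is that the lemma states a strict inequality, whereas the argument naturally gives ``$\leq$''. Strictness follows because the two one-sided events cannot both have probability equal to their bounds while summing to the total. More simply, if $\varepsilon>0$, the two events are disjoint and each bound is at most $1$, so strictness can only fail in degenerate situations, which are treated separately. This is immaterial for the later applications in the paper, where only the exponential order of the bound is used.
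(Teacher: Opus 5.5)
Your proof of the non-strict version is correct and is the standard argument: Hoeffding's lemma via convexity and $\varphi''\le 1/4$, the exponential Markov bound with $t=4\varepsilon m/\sum_i(b_i-a_i)^2$, and a union bound over the two tails. The paper gives no proof of its own; it simply cites Hoeffding, whose argument is essentially this one. The one step that does not hold up is your justification of the \emph{strict} inequality.

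\textbf{The strictness gap.} Saying the two one-sided events ``cannot both have probability equal to their bounds while summing to the total'' gives no reason. Suppose each tail had probability exactly $e^{-c}$ with $2e^{-c}\le 1$. Then nothing in the union bound is violated, and ``each bound is at most $1$'' does not exclude this.

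\textbf{The fix.} The actual reason is that Markov's inequality is strict here. Put $Z=S_m-\mathbf{E}S_m$, $a=\varepsilon m>0$, $t>0$ and $W=e^{tZ}$. Equality in $\mathbf{P}(W\ge e^{ta})\le e^{-ta}\mathbf{E}W$ forces $W\in\{0,e^{ta}\}$ almost surely. Since $W>0$, this means $Z=a$ almost surely, which contradicts $\mathbf{E}Z=0$. So each one-sided tail is strictly below $\exp\bigl(-2\varepsilon^2m^2/\sum_i(b_i-a_i)^2\bigr)$, and their sum is strictly below twice that.

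Alternatively, $\varphi(u)<u^2/8$ for every $u\neq 0$, because $\varphi''=\rho(1-\rho)<1/4$ except at most at one point. This makes the moment generating function bound strict for any $i$ with $b_i>a_i$.

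\textbf{The degenerate cases.} The case $\sum_i(b_i-a_i)^2=0$ is not ``trivial'' for the strict version. There the right-hand side is undefined, and under the convention $\exp(-\infty)=0$ the claim would read $0<0$. Likewise, the statement tacitly needs $\varepsilon\ge 0$: for $\varepsilon<0$ the left side equals $1$ while the right side can be smaller. Your Step 2 uses $\varepsilon>0$, and $\varepsilon=0$ is trivial.

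These are defects of the statement as written, not of your method. They are also irrelevant to the paper's applications, which use only the exponential order of the bound. Still, you should state the assumptions $\varepsilon>0$ and $\sum_i(b_i-a_i)^2>0$ explicitly rather than claim the degenerate cases are handled.
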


We also need a lemma bounding the number of edges between two parts of a graph. Here we use the following lemma for $(n, d, \lambda)$-graphs. For a proof, see \cite{Sp2} (Lemma 2.1).

\begin{lemma}
    \label{Eig}
    Let $G$ be an $(n, d, \lambda)$-graph. Then 
    $$
    e(S, \overline{S}) \geq \frac{(d - \lambda)|S|(n - |S|)}{n}.
    $$
\end{lemma}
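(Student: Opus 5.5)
This is the expander mixing lemma in its cut form. I would prove it with the Rayleigh quotient of the Laplacian $L = dI - A$ applied to a centered indicator vector of $S$. Let $A$ be the adjacency matrix, with orthonormal eigenvectors $v_1 = \frac{1}{\sqrt{n}}\mathbf{1}, v_2, \dotsc, v_n$ for the eigenvalues $d = \lambda_1 \geq \lambda_2 \geq \dotsc \geq \lambda_n$. For any vector $x$ we have the standard identity
$$
x^{T}(dI - A)x = \sum_{(u,w) \in E_G}(x_u - x_w)^2 .
$$

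The steps, in order, are as follows.

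First, choose $x = \mathbf{1}_S - \frac{|S|}{n}\mathbf{1}$. This vector is orthogonal to $\mathbf{1}$, and $x_u - x_w$ depends only on whether $u$ and $w$ lie on the same side of the cut. Hence the right-hand side of the identity equals exactly $e(S, \overline{S})$.

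Second, since $x \perp v_1$, expand $x = \sum_{i \geq 2} c_i v_i$. Then
$$
x^{T}(dI - A)x = \sum_{i \geq 2}(d - \lambda_i)c_i^2 \geq (d - \lambda_2)\|x\|^2 \geq (d - \lambda)\|x\|^2 ,
$$
using $\lambda_2 \leq \lambda$ from the definition of $\lambda(G)$.

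Third, compute the norm:
$$
\|x\|^2 = |S|\left(1 - \frac{|S|}{n}\right)^2 + (n - |S|)\frac{|S|^2}{n^2} = \frac{|S|(n - |S|)}{n}.
$$
Combining the three steps gives $e(S, \overline{S}) \geq \frac{(d - \lambda)|S|(n - |S|)}{n}$.

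No step is a genuine obstacle. The only points needing care are the sign convention (only $\lambda_2 \leq \lambda$ is used, not $|\lambda_n|$) and checking the norm computation.
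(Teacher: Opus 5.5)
Your proof is correct and is the standard Rayleigh-quotient argument for $dI - A$ applied to the centered indicator $\mathbf{1}_S - \frac{|S|}{n}\mathbf{1}$. The paper gives no proof of its own here and only refers to Lemma 2.1 of an external source, where this is essentially the standard argument; your version in fact yields the slightly stronger bound with $d - \lambda_2$ in place of $d - \lambda$, and taking $v_1 = \mathbf{1}/\sqrt{n}$ remains valid even when $d$ has multiplicity greater than one.
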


We finally need an algorithm to modify partition without changing its modularity much.

\begin{construction}\label{algorithm}
    Let $G_n$ be a graph on $n$ vertices, and let $k \geq 2$, $k \in \mathbb{N}$. Let us also fix some positive integer $k$.
    We define an algorithm to construct two sets, $\mathcal{A}^{\prime}$ and $\mathcal{A}_{big}$ from a partition $\mathcal{A} = \{A_1, A_2, \ldots, A_m\}$. We also need an auxiliary set $A_{merged}$ which is initially empty.
    
    Iterate through $A_i, i\in\{1,2,\ldots,m\}$ :
    \begin{itemize}
        \renewcommand{\labelitemi}{\textbullet}
        \renewcommand{\labelitemii}{\textperiodcentered}
        \item If $|A_i| > \frac{n}{k}$, then add $A_i$ to $A^{\prime}$
        \item  If $|A_i| \leq \frac{n}{k}$ :
        \begin{itemize}
            \item $A_{merged} := A_{merged} \cup A_i$
            \item If $|A_{merged}| \geq \frac{n}{k}$, then $A_{merged}$ is added to $\mathcal{A}^{\prime}$ and $A_{merged} := \varnothing$
        \end{itemize}
    \end{itemize}
    
    After iterating through $A_i \in \mathcal{A}$ consider the set $A_{merged}$ :
    \begin{itemize}
        \renewcommand{\labelitemi}{\textbullet}
        \item If $|A_{merged}| > \frac{n}{k}$, then $A_{merged}$ is added to $\mathcal{A}^{\prime}$
        \item If $|A_{merged}| \leq \frac{n}{k}$, then $A_{merged}$ is ignored
    \end{itemize}
    
    Finally, $\mathcal{A}_{big}$ is defined in the following way:
    $$
        \mathcal{A}_{big} := \bigg\{A : A \in \mathcal{A}^{\prime}, |A| > \frac{2n}{k}\bigg\}.
    $$
\end{construction}

Note the following key property of this algorithm: any element from $\mathcal{A}_{big}$ is also present in $\mathcal{A}$ and any element from $\mathcal{A} \setminus \mathcal{A}_{big}$ has size of no more than $\frac{2n}{k}$.

\subsubsection{Auxiliary lemmas}

In this section we prove concentration inequalities on $e(G_n(p)), e(V, \overline{V}, p)$ and $\sum\limits_{v \in V}{\deg(v, p)}$, where $V$ is a subset of vertices of an $(n, d, \lambda)$-graph (with some restrictions on its size).

\begin{lemma}
    \label{LeG}
    Let $\{G_n\}$ satisfy the conditions of Theorem \ref{ProbMain} and let $p = p(n) = f(n)\frac{1}{\sqrt{d}}, f(n) \to \infty$. Then almost surely the following inequality holds for all sufficiently large $n$. 
    \begin{equation}
    \label{L1}
    \left|e(G_n(p)) - p\frac{dn}{2}\right| < \frac{p\log(f(n))}{f(n)}\frac{dn}{2}.
    \end{equation}
\end{lemma}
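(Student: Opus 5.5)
The number of edges $e(G_n(p))$ is a sum of $m = \frac{dn}{2}$ independent Bernoulli$(p)$ indicators, one for each edge of $G_n$, so its mean is $p\frac{dn}{2}$. I will apply Lemma~\ref{Hf} (Hoeffding) with $X_i \in [0,1]$ and deviation $\varepsilon m$, where $\varepsilon = \frac{p\log f(n)}{f(n)}$. This gives
$$
\mathbf{P}\left(\left|e(G_n(p)) - p\tfrac{dn}{2}\right| \geq \tfrac{p\log f(n)}{f(n)}\tfrac{dn}{2}\right) < 2\exp\left(-2\varepsilon^2 m\right) = 2\exp\left(-\frac{p^2\log^2 f(n)}{f^2(n)}\,dn\right).
$$
Now substitute $p = f(n)/\sqrt{d}$. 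Then $p^2 d/f^2(n) = 1$, and the exponent becomes simply $-n\log^2 f(n)$. So the failure probability at index $n$ is at most $2e^{-n\log^2 f(n)}$.

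To get the ``almost surely for all sufficiently large $n$'' conclusion, I use Borel--Cantelli. Since $f(n)\to\infty$, we have $\log^2 f(n) \geq 1$ for all large $n$. Hence the failure probabilities are eventually bounded by $2e^{-n}$, which is summable over $n\in S$. By Borel--Cantelli, almost surely only finitely many of the events fail, which is exactly the claim (\ref{L1}).

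There is no real obstacle here; the only points to check are bookkeeping ones.
\begin{itemize}
\item Hoeffding is applied with $m = e(G_n) = dn/2$ summands, so the sum of squared ranges in the denominator is also $m$.
\item If $p$ happens to exceed $1$ for small $n$, we simply cap it at $1$; this does not matter asymptotically.
\item The coupling across different $n$ is irrelevant, because Borel--Cantelli needs no independence between the events for different $n$.
\end{itemize}
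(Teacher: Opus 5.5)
Your proof is correct and follows the paper's route: Hoeffding with $m = dn/2$ indicators in $[0,1]$, after substituting $p = f(n)/\sqrt{d}$, gives failure probability $2e^{-n\log^2 f(n)}$ at index $n$. Your Borel--Cantelli finish is slightly cleaner than the paper's. The paper lower-bounds the probability that (\ref{L1}) holds for all $n \ge N$ by the product $\prod_{n\ge N}(1-e^{-n\log^2 f(n)})$, which tacitly treats the events for different $n$ as independent; your union-bound argument needs no assumption about how the $G_n(p)$ are coupled across $n$.
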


\begin{proof}
From Lemma \ref{Hf} for any fixed $n$ we have 
$$
\mathbf{P}\left(\left|e(G_n(p)) - p\frac{dn}{2}\right| \geq \frac{p\log(f(n))}{f(n)}\frac{dn}{2}\right) \leq 2\exp\left\{-\varepsilon^2dn\right\},
$$
where $\varepsilon = \frac{p\log(f(n))}{f(n)}$. Putting it all together we obtain
$$
\mathbf{P}\left(\left|e(G_n(p)) - p\frac{dn}{2}\right| \geq \frac{p\log(f(n))}{f(n)}\frac{dn}{2}\right) \leq 
$$
$$
\leq 2\exp\{-\log^2(f(n))n\}.
$$
It thus remains to prove that the probability of $\{G_n(p)\}$ to satisfy inequality (\ref{L1}) for all $n \geq N$ tends to 1 as $N$ tends to infinity.

$$
\mathbf{P}(G_n(p) \text{ satisfies inequality (\ref{L1}) for all } n \geq N) \geq \prod_{n \geq N}(1 - e^{-n\log^2(f(n))}).
$$
It remains to notice that the right hand side tends to 1 as $N \to \infty$.
\end{proof}

\begin{lemma}
    \label{LeA}
	    Let $\{G_n\}$ satisfy the conditions of Theorem \ref{ProbMain}, let $k$ be a positive integer and let $p = p(n) = f(n)\frac{1}{\sqrt{d}}, f(n) \to \infty$. Then almost surely the following inequality holds for all sufficiently large $n$.
	    $$\forall V \subset V_{G_n}, |V| \in \left[\frac{n}{k}, \frac{(k - 1)n}{k}\right] : |e(V, \overline{V}, p) - pe(V, \overline{V})| < \frac{p\log(f(n))}{f(n)}e(V, \overline{V}).$$
\end{lemma}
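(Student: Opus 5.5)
The plan is to prove the bound for one fixed set $V$ using Lemma \ref{Hf}, and then take a union bound over all admissible $V$. The exponent in that union bound will be linear in $n$ with a coefficient that grows with $n$. This is enough to beat the $2^n$ choices of $V$ and to apply Borel--Cantelli, exactly as in the proof of Lemma \ref{LeG}.

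\textbf{Step 1: concentration for a fixed $V$.} Fix $n$ and $V$ with $|V| \in \left[\frac{n}{k}, \frac{(k-1)n}{k}\right]$, and put $m = e(V, \overline{V})$. Then $e(V, \overline{V}, p)$ is a sum of $m$ independent Bernoulli$(p)$ variables, each taking values in $[0,1]$, and its mean is $pm$. Set $\varepsilon = \frac{p\log(f(n))}{f(n)}$. Lemma \ref{Hf} gives
$$
\mathbf{P}\left(|e(V,\overline{V},p) - pm| \geq \varepsilon m\right) < 2\exp(-2\varepsilon^2 m).
$$
Since $p = f(n)/\sqrt{d}$, we have $\varepsilon^2 = \frac{\log^2(f(n))}{d}$, so the right-hand side equals $2\exp\left(-\frac{2\log^2(f(n))}{d}\, e(V,\overline{V})\right)$.

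\textbf{Step 2: a uniform lower bound on $e(V,\overline{V})$.} This step uses the hypothesis on $\lambda/d$ and is the only delicate point. Since $\limsup \lambda(n)/d(n) < 1$, there is $c > 0$ with $d - \lambda \geq c d$ for all large $n$. By Lemma \ref{Eig}, and because $|V|(n-|V|) \geq \frac{(k-1)n^2}{k^2}$ on the admissible range,
$$
e(V, \overline{V}) \geq \frac{(d-\lambda)|V|(n-|V|)}{n} \geq c\,\frac{k-1}{k^2}\, d n .
$$
The important feature is that this bound does not depend on $V$. Substituting it into Step 1, the failure probability for each single $V$ is at most
$$
2\exp\left(-c' n \log^2(f(n))\right), \qquad c' = \frac{2c(k-1)}{k^2} > 0 .
$$

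\textbf{Step 3: union bound and Borel--Cantelli.} There are at most $2^n$ sets $V$. Hence the probability that the inequality fails for some admissible $V$ is at most
$$
2\exp\left(n\log 2 - c' n\log^2(f(n))\right).
$$
Because $f(n) \to \infty$, we have $c'\log^2(f(n)) \geq 1 + \log 2$ for all large $n$, so this probability is at most $2e^{-n}$.

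The series $\sum_n 2e^{-n}$ converges, and the sum may be restricted to $n \in S$. By Borel--Cantelli, almost surely only finitely many $n$ are bad. Equivalently, as in Lemma \ref{LeG}, the probability that no $n \geq N$ is bad is at least $\prod_{n \geq N}(1 - 2e^{-n})$, which tends to $1$ as $N \to \infty$. This gives the claim.

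The main obstacle is Step 2. Without the spectral gap, $e(V,\overline{V})$ could be small for some $V$, and then the per-set exponent would no longer dominate the $2^n$ factor. Lemma \ref{Eig} together with the size restriction on $|V|$ is exactly what makes the exponent of order $n\log^2(f(n))$ uniformly over all admissible $V$.
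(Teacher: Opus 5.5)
Your proof is correct and follows the paper's argument essentially step for step: Hoeffding for a fixed $V$, then Lemma \ref{Eig} together with $d-\lambda \geq cd$ to get a $V$-independent lower bound $e(V,\overline{V}) = \Omega(dn)$, so that the per-set failure probability is $e^{-\Omega(n\log^2 f(n))}$, then a union bound over at most $2^n$ sets, and finally the same summability conclusion as in Lemma \ref{LeG}. Your first Borel--Cantelli argument at the end is slightly cleaner than the product bound $\prod_{n\ge N}(1-2e^{-n})$ that you also mention (and that the paper uses), since the product tacitly assumes independence across $n$; without it, the bound $1-\sum_{n\ge N}2e^{-n}$ suffices.
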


\begin{proof}
Let us fix some $n$ and bound the probability of the following event for some fixed  $V \subset V_G$: $|e(V, \overline{V}, p) - pe(V, \overline{V})| \geq \frac{p\log(f(n))}{f(n)}e(V, \overline{V})$. Taking $\varepsilon = \frac{p\log(f(n))}{f(n)}$ and applying Lemmas \ref{Hf} and \ref{Eig} we obtain 
$$
\mathbf{P}\left(|e(V, \overline{V}, p) - pe(V, \overline{V})| \geq \frac{p\log(f(n))}{f(n)}e(V, \overline{V})\right) < 2\exp\left\{-2\varepsilon^2e(V, \overline{V})\right\} \leq 
$$
$$
\leq 2\exp\left\{-2\frac{p^2\log^2(f(n))}{f^2(n)}\frac{(d-\lambda)|V||\overline{V}|}{|V_{G_n}|}\right\}.
$$
As we know that $d - \lambda \geq Cd$, we get that for all $V : |V| \in \left[\frac{n}{k}, \frac{(k - 1)n}{k}\right]$ the following bound holds.
$$
\mathbf{P}\left(|e(V, \overline{V}, p) - pe(V, \overline{V})| \geq \frac{p\log(f(n))}{f(n)}e(V, \overline{V})\right) < 2\exp\left\{-\frac{2}{d}\log^2(f(n))\frac{Cdn}{k^2}\right\},
$$
or, equivalently,
$$
\mathbf{P}\left(|e(V, \overline{V}, p) - pe(V, \overline{V})| \geq \frac{p\log(f(n))}{f(n)}e(V, \overline{V})\right) < 2\exp\left\{-C_0(k)\log^2(f(n))n\right\}.
$$
That means that the probability the property we want to prove does not hold is no more than
$$
2^n\exp\left\{-C_0n\log^2(f(n))\right\} \leq \exp\left\{n\ln 2 - C_0n\log^2(f(n))\right\} < e^{-C_1n\log^2(f(n))}.
$$
The last part of the proof coincides with that of Lemma \ref{LeG}.
\end{proof}

\begin{lemma}
    \label{Ldeg}
    Let $\{G_n\}$ satisfy the conditions of Theorem \ref{ProbMain}, let $k$ be a positive integer and let $p = p(n) = f(n)\frac{1}{\sqrt{d}}, f(n) \to \infty$. Then almost surely the following inequality holds for all sufficiently large $n$. 
    \begin{multline*}
    \forall V \subset V_{G_n}, |V| \geq \frac{n}{k} : \left|\sum_{v \in V}{\deg(v, p)} - p\sum_{v \in V}{\deg(v)}\right| < \frac{p\log(f(n))}{f(n)}\sum_{v \in V}{\deg(v)}  = \\ = \frac{p\log(f(n))}{f(n)}d|V|.
    \end{multline*}
\end{lemma}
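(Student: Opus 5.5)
The plan is to follow the template of Lemmas \ref{LeG} and \ref{LeA}: first prove a concentration bound for a single fixed set $V$, then take a union bound over at most $2^n$ sets, and finish with a Borel--Cantelli type argument over $n$.

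\textbf{Fixed set.} Fix $V$ with $|V|\ge n/k$ and write
$$
\sum_{v\in V}\deg(v,p) = 2e(V,p)+e(V,\overline V,p).
$$
The deterministic counterpart is $\sum_{v\in V}\deg(v)=d|V|=2e(V)+e(V,\overline V)$. Introduce one indicator $X_e$ for each edge $e$ of $G_n$ that has at least one endpoint in $V$. Set
$$
S=\sum_e c_eX_e, \qquad c_e=2 \text{ if } e\subset V, \qquad c_e=1 \text{ if } e \text{ crosses the cut}.
$$
The $X_e$ are independent, $\mathbf E S=pd|V|$, and each summand lies in an interval of length at most $2$. Let $m$ be the number of such edges; then $d|V|/2\le m\le d|V|$.

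Apply Lemma \ref{Hf} with total deviation $t=\varepsilon d|V|$, where $\varepsilon=\frac{p\log f(n)}{f(n)}$. This gives
$$
\mathbf P\bigl(|S-\mathbf ES|\ge t\bigr) < 2\exp\!\left(-\frac{2t^2}{4m}\right)\le 2\exp\!\left(-\frac{\varepsilon^2 d|V|}{2}\right).
$$
Since $p=f(n)/\sqrt d$, we have $\varepsilon^2 d=\log^2 f(n)$. Together with $|V|\ge n/k$, the bound becomes
$$
2\exp\!\left(-\frac{n\log^2 f(n)}{2k}\right).
$$
The advantage over Lemma \ref{LeA} is that no appeal to Lemma \ref{Eig} or to the spectral gap is needed. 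The sum of degrees is automatically of order $dn$, whereas $e(V,\overline V)$ needed the expansion bound. The computation also works without the assumption $p\le 1$.

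\textbf{Union bound.} There are at most $2^n$ sets $V$. The probability that some admissible $V$ violates the inequality is therefore at most
$$
2^{n+1}\exp\!\left(-\frac{n\log^2 f(n)}{2k}\right)\le e^{-C n\log^2 f(n)}
$$
for large $n$, because $\log^2 f(n)\to\infty$ and $k$ is fixed.

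\textbf{Almost sure statement.} This is exactly as at the end of Lemma \ref{LeG}. The probability that the inequality fails for some $n\ge N$ is at most $\sum_{n\ge N}e^{-Cn\log^2 f(n)}$. This sum tends to $0$ because the terms are eventually below $e^{-n}$. Hence the inequality almost surely holds for all sufficiently large $n$.

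\textbf{Main obstacle.} The only step needing care is the Hoeffding bookkeeping. Lemma \ref{Hf} is phrased with deviation $\varepsilon m$ for $m$ variables, so I must rescale it to deviation $t$ and use the interval lengths $b_i-a_i\le 2$ correctly. Dropping the factor $2$ for internal edges, or miscounting $m$, only changes constants, which are absorbed into $C$.
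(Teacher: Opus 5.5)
Your proposal is correct and follows the paper's proof essentially step by step. It uses the same split of $\sum_{v\in V}\deg(v,p)$ into crossing edges with weight $1$ and internal edges with weight $2$, Hoeffding's inequality with $\frac12 d|V|\le m\le d|V|$ (your constant $\frac12$ in the exponent versus the paper's $\frac14$ is immaterial), a union bound over at most $2^n$ sets, and the same tail argument over $n$. Your last step bounds the failure probability for $n\ge N$ by $\sum_{n\ge N}e^{-Cn\log^2 f(n)}$, which is slightly cleaner than the paper's product $\prod_{n\ge N}(1-e^{-n\log^2 f(n)})$ because it does not tacitly assume independence across different $n$.
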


\begin{proof}
Let us fix some $n$ and bound the probability of the following event
$$
\left|\sum_{v \in V}{\deg(v, p)} - p\sum_{v \in V}{\deg(v)}\right| \geq \frac{p\log(f(n))}{f(n)}\sum_{v \in V}{\deg(v)}
$$ 
for some fixed $V \subset V_{G_n}$. First let us rewrite $\sum_{v \in V}{\deg(v, p)}$ in form $\sum\limits_{e \in E_1}{X_e} + \sum\limits_{e \in E_2}{2X_e}$, where $E_1$ is the set of edges with exactly 1 endpoint in $V$, $E_2$ is the set of edges with both of their endpoints in $V$, and $X_e$ is an indicator of an edge $e$ from $G_n$ appearing in $G_n(p)$. Then, applying Lemma \ref{Hf} with $$m = |E_1| + |E_2| \geq \frac12d|V|, a_i = 0, b_i = 2, \varepsilon = \frac{p\log(f(n))}{f(n)},$$ we get
$$
\mathbf{P}\left(\left|\sum_{v \in V}{\deg(v, p)} - p\sum_{v \in V}{\deg(v)}\right| \geq \frac{p\log(f(n))}{f(n)}\sum_{v \in V}{\deg(v)}\right) < 2\exp\left\{-\frac12\varepsilon^2(|E_1| + |E_2|)\right\} \leq 
$$
$$
\leq 2\exp\left\{-\frac14\frac{p^2\log^2(f(n))}{f^2(n)}d|V|\right\}.
$$

Considering only $V : |V| \geq \frac{n}{k}$ we obtain the following bound for such $V$:
$$
\mathbf{P}\left(\left|\sum_{v \in V}{\deg(v, p)} - p\sum_{v \in V}{\deg(v)}\right| \geq \frac{p\log(f(n))}{f(n)}\sum_{v \in V}{\deg(v)}\right) < 2\exp\left\{-C(k)\log^2(f(n))n\right\}.
$$

The probability that the property does not hold for at least one such $V$ can then be easily bounded from above:
$$
2^{n + 1}\exp\left\{-Cn\log^2(f(n))\right\} \leq \exp\left\{(n + 1)\ln2 - Cn\log^2(f(n))\right\} < e^{-C_1n\log^2(f(n))}.
$$

The last part of the proof coincides with that of Lemma \ref{LeG}.
\end{proof}

\subsubsection{Proof of Theorem \ref{ProbMain}}

{\bf Theorem. } {\it Let $S$ be an infinite subset of $\mathbb{N}$. Let $\{G_n\}_{n \in S}$ be a sequence of $(n, d(n), \lambda(n))$-graphs with $\limsup_{n \to \infty} \frac{\lambda(n)}{d(n)} < 1$ and let $p = p(n) \gg \frac1{\sqrt{d}}$. Consider a sequence of random graphs $\{G_{n}(p)\}$. Then
$$
\mathbf{P}\left(\limsup_{n \to \infty} |q^*(G_{n}) - q^{*}(G_{n}(p))| = 0\right) = 1.
$$}

Let us prove that almost surely $\limsup_{n \to \infty} |q^*(G_{n}) - q^{*}(G_{n}(p))| < \varepsilon$ for any $\varepsilon > 0$, which is obviously equivalent to the statement of the theorem. Since the properties from Lemmas \ref{LeG} --- \ref{Ldeg} hold almost surely, we can assume that the bounds from these lemmas always hold. 

Let us first prove the upper bound, $q^{*}(G_n) \leq q^{*}(G_n(p)) + \varepsilon$.

Let us fix some $k$ (we will specify its value later). Consider an optimal partition $\mathcal{A}$ of graph $G_n$. Let us first assume that each of the sets in $\mathcal{A}$ has less than $\frac{(k - 2)n}{k}$ elements. Let us then apply Construction \ref{algorithm} to $\mathcal{A}$ to obtain $\mathcal{A}^{\prime}$ and $\mathcal{A}_{big}$. Note that while $\mathcal{A}^{\prime}$ is not a partition, there exists a set $U$ such that $|U| < \frac{n}{k}$ and $\mathcal{A}^{\prime} \cup \{U\}$ is a partition. Let us add the elements of $U$ to one of the sets (we will call this set $V$) from $\mathcal{A}^{\prime}$. So we obtain some partition $\mathcal{A}^{\prime\prime}$. Then we can get the following bound on $q(\mathcal{A}^{\prime\prime})$.
\begin{multline*}
q(\mathcal{A}^{\prime\prime}) = \sum\limits_{A \in \mathcal{A}^{\prime\prime}}{\frac{e(A)}{e(G_n)}} - \sum\limits_{A \in \mathcal{A}^{\prime\prime}}{\frac{d^2|A|^2}{4e^2(G_n)}} = \sum\limits_{A \in \mathcal{A}_{big}\setminus\{V\}}{\frac{e(A)}{e(G_n)}} + \sum\limits_{A \in \mathcal{A}^{\prime\prime}\setminus\mathcal{A}_{big}}{\frac{e(A)}{e(G_n)}} - \sum\limits_{A \in \mathcal{A}^{\prime\prime}}{\frac{|A|^2}{n^2}} \geq \\ \geq \sum\limits_{A \in \mathcal{A}}{\frac{e(A)}{e(G_n)}} - \sum\limits_{A \in \mathcal{A}^{\prime\prime}}{\frac{|A|^2}{n^2}} \geq \sum\limits_{A \in \mathcal{A}}{\frac{e(A)}{e(G_n)}} - \sum\limits_{A \in \mathcal{A}_{big}\setminus\{V\}}{\frac{|A|^2}{n^2}} - \sum\limits_{A \in \mathcal{A}^{\prime}\setminus \mathcal{A}_{big}}{\frac{|A|^2}{n^2}} - \\ - \frac{|V|^2}{n^2} + \frac{|V^2|}{n^2} - \frac{|U\cup V|^2}{n^2} \geq \sum\limits_{A \in \mathcal{A}}{\frac{e(A)}{e(G_n)}} - \sum\limits_{A \in \mathcal{A}}{\frac{|A|^2}{n^2}} + \frac{|V|^2}{n^2} - \frac{|U\cup V|^2}{n^2} -\\- \sum\limits_{A \in \mathcal{A}^{\prime} \setminus \mathcal{A}_{big}}{\frac{|A|^2}{n^2}} = q(\mathcal{A}) +\frac{|V|^2}{n^2} - \frac{(|U| + |V|)^2}{n^2} - \sum\limits_{A \in \mathcal{A}^{\prime} \cup \{U\} \setminus \mathcal{A}_{big}}{\frac{|A|^2}{n^2}} \geq \\ \geq q(\mathcal{A}) - \frac4k - \frac1{k^2} = q^{*}(G_n) - \frac4k - \frac1{k^2}.
\end{multline*}

Now consider $\mathcal{A}^{\prime\prime}$ as a partition of a random graph $G_n(p)$. Note that
$
q^{*}(G_n(p)) \geq q(\mathcal{A}^{\prime\prime}, p).
$
Let us rewrite $q(\mathcal{A}^{\prime\prime}, p)$ as follows using Lemmas \ref{LeG} --- \ref{Ldeg}.
\begin{equation*}
q(\mathcal{A}^{\prime\prime}, p) = \left(\sum\limits_{A \in \mathcal{A}^{\prime\prime}}{\frac{e(A)}{e(G_n)}} - \sum\limits_{A \in \mathcal{A}^{\prime\prime}}{\frac{\left(\sum_{v\in A}{\deg(v)}\right)^2}{4e^2(G_n)}}\right)(1 + o(1)) = q(\mathcal{A}^{\prime\prime})(1 + o(1)).
\end{equation*}
We thus obtain $q(\mathcal{A}^{\prime\prime}, p) \geq (1 - o(1))(q^{*}(G_n) - \frac4k - \frac1{k^2})$ and thus $q^{*}(G_n) \leq q^{*}(G_n(p)) + \frac4k + \frac1{k^2} + o(1)$.

Let us now assume that $\mathcal{A}$ has a set of size at least $\frac{(k - 2)n}{k}$. In this case the modularity of $G_n$ does not exceed $\frac{4k - 4}{k^2} \leq \frac4k$.
At the same time the modularity of $G_n(p)$ is at least 0, so in this case we have $q^{*}(G_n) \leq q^{*}(G_n(p)) + \frac4k$. 

It remains to notice that taking $k$ such that $ \frac4k + \frac1{k^2} < \varepsilon$ completes the proof of the first part.

Let us now prove the second part of the theorem, $q^{*}(G_n) \geq q^{*}(G_n(p)) - \varepsilon$.

Consider an optimal partition $\mathcal{A}$ of graph $G_n(p)$. Assume it contais a set $A_i$ such that $|A_i| > \frac{(k - 1)n}{k}$. The modularity of $G_n(p)$ is then bounded by $$1 - \frac{|A_i|^2}{n^2}(1 + o(1)) \leq \frac2{k} + o(1).$$ Now assume that there are no such $A_i$ in $\mathcal{A}$. Let us then apply Construction \ref{algorithm} to obtain $\mathcal{A}^{\prime}$ and $\mathcal{A}_{big}$ from $\mathcal{A}$. Just like in the previous part, let $U$ be the set of vertices that did not fall into any of the sets of $\mathcal{A}^{\prime}$. It is obvious that $q^{*}(G_n(p))$ does not exceed  
$$
    	1 
    	- \frac12\sum\limits_{A \in \mathcal{A}^{\prime}}{\frac{e(A, \overline{A}, p)}{e(G_n(p))}} - \sum\limits_{A \in \mathcal{A}_{big}}\frac{\left(\sum_{v \in A}{\deg(v, p)}\right)^2}{4e^2(G_n(p))}.
$$
Applying Lemmas \ref{LeG} -- \ref{Ldeg} we obtain that for all big enough $n$
\begin{multline*}
    	q^{*}(G_n(p)) \leq  \left(1 - \frac12\sum\limits_{A \in \mathcal{A}^{\prime}}{\frac{e(A, \overline{A})}{e(G_n)}} - \sum\limits_{A \in \mathcal{A}_{big}}\frac{|A|^2}{n^2}\right)(1 + o(1)) = \\ = \left(1 - \frac12\sum\limits_{A \in \mathcal{A}^{\prime}\cup \{U\}}{\frac{e(A, \overline{A})}{e(G_n)}} + \frac{e(U,\overline{U})}{dn} - \sum\limits_{A \in \mathcal{A}_{big}}\frac{|A|^2}{n^2} - \sum\limits_{A \in \mathcal{A}^{\prime} \cup \{U\} \setminus \mathcal{A}_{big}}\frac{|A|^2}{n^2} + \sum\limits_{A \in \mathcal{A}^{\prime} \cup \{U\} \setminus \mathcal{A}_{big}}\frac{|A|^2}{n^2}\right)\times \\ \times(1 + o(1)) \leq \left(q^{*}(G_n) + \frac{e(U, \overline{U})}{dn} + \sum\limits_{A \in \mathcal{A}^{\prime} \cup \{U\} \setminus \mathcal{A}_{big}}\frac{|A|^2}{n^2}\right)(1 + o(1)) \leq \\ \leq q^{*}(G_n) + \frac{d + \lambda}{dk^2} + \frac{2}{k} + o(1) \leq q^{*}(G_n) + \frac{2}{k^2} + \frac{2}{k} + o(1).
\end{multline*}

It remains to notice that taking any $k$ such $ \frac2k + \frac2{k^2} < \varepsilon$ completes the proof of the second part of the theorem.

\subsubsection{Application to Johnson graphs}

Applying Theorem \ref{Pr1} to Johnson graphs we obtain the following upper bounds for their modularity.

\begin{theorem}
\label{PC3}
Let $p = p(n) = \omega\left(\binom{n - r}{r}^{-1/2}\right)$.
Then almost surely 
$$
\limsup_{n \to \infty} q^{*}(G_p(n, r, 0)) \leq \limsup_{n \to \infty} \frac{r}{n - r}.
$$
\end{theorem}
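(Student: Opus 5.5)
The plan is to apply Theorem \ref{Pr1} to the sequence $G_n = G(n, r, 0)$. The first step is to identify the parameters. The graph $G(n,r,0)$ is $d$-regular with $d = \binom{n-r}{r}$, since a neighbour of a fixed $r$-set is any $r$-subset of the complementary $n-r$ coordinates. It has $\binom{n}{r}$ vertices; Theorem \ref{Pr1} is indexed by the number of vertices, so we index the sequence by $n$ and note that $\binom{n}{r}\to\infty$. Nothing in the proofs of Lemmas \ref{LeG}--\ref{Ldeg} uses more than this growth, so the index set $S$ plays no real role. The hypothesis $p = \omega\bigl(\binom{n-r}{r}^{-1/2}\bigr)$ is then exactly $p \gg 1/\sqrt{d}$, as Theorem \ref{Pr1} requires.

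The second step is the spectral input. By Theorem \ref{LovT}, $\lambda(G(n,r,0))/d = \frac{r}{n-r}$. Set $c := \limsup_{n\to\infty}\frac{r}{n-r}$, so that $\limsup \lambda/d \le c$.

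If $c < 1$, Theorem \ref{Pr1} applies directly and gives, almost surely, $\limsup q^*(G_p(n,r,0)) \le c$, which is the claim. If $c \ge 1$, the statement is trivial, because modularity never exceeds $1$: the edge contribution is at most $1$ and the degree tax is nonnegative.

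The main obstacle is a subtlety inside Theorem \ref{Pr1}. It rests on Theorem \ref{ProbMain}, which needs $\limsup \lambda/d < 1$; that is harmless here by the case split. It also uses Theorem \ref{ModEigen}, which needs $d \ge 3$, and this holds for all large $n$ as long as $n - r \ge r+1$, the range in which the graph has edges. Finally, Theorem \ref{Pr1} should really be read with $\limsup_n q^*(G_n) \le c$ coming from Theorem \ref{ThModEigen}, so that combining it with $|q^*(G_n) - q^*(G_n(p))| \to 0$ a.s. from Theorem \ref{ProbMain} yields the bound.
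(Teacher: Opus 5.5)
Your proposal is correct and follows the paper's route exactly. The paper obtains this theorem simply by applying Theorem~\ref{Pr1} to $G(n,r,0)$, with $d=\binom{n-r}{r}$ and $\lambda/d=\frac{r}{n-r}$ from Theorem~\ref{LovT}. Your extra checks (the trivial case $c\ge 1$, $d\ge 3$ for Theorem~\ref{ThModEigen}, and indexing by $n$ rather than by the vertex count) are left implicit in the paper. One small sharpening: the almost-sure step in Lemmas~\ref{LeG}--\ref{Ldeg} needs summability over $n$ of $e^{-c\binom{n}{r}\log^2 f(n)}$, not merely $\binom{n}{r}\to\infty$, and this holds since $\binom{n}{r}\ge n$.
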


\begin{theorem}
\label{PC5}
Let $r = r(n)$ and $s = s(n)$ satisfy the inequality $(r - s)(n - 1) \geq r(n - r)$ for all $n$. Let $p = p(n) = \omega\left(\frac{1}{\sqrt{\binom{n - r}{r - s}\binom{r}{s}}}\right)$. Then almost surely 
$$
\limsup_{n \to \infty} q^{*}(G_p(n, r, 0)) \leq \limsup_{n \to \infty} \frac{|sn - r^2|}{r(n - r)}.
$$
\end{theorem}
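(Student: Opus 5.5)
The plan is to apply Theorem \ref{Pr1} to the sequence $G_n = G(n,r,s)$, with Theorem \ref{BrT} supplying the spectral input. (The statement writes $G_p(n,r,0)$ in the conclusion, but it is clearly meant to be $G_p(n,r,s)$; I will prove it in that form.) Two things are needed: a formula for the degree $d$, so that the hypothesis on $p$ can be converted into $p \gg 1/\sqrt{d}$, and a formula for the ratio $\lambda/d$. Then I must check the hypothesis $\limsup \lambda/d < 1$ that Theorem \ref{Pr1} inherits from Theorem \ref{ProbMain}.

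First, the degree. A vertex $v$ with $\|v\|^2=r$ is adjacent to exactly those $u$ that share $s$ of the $r$ coordinates of $v$ and take their remaining $r-s$ ones outside the support of $v$. Hence
$$d = d(G(n,r,s)) = \binom{r}{s}\binom{n-r}{r-s}.$$
The assumption $p = \omega\bigl(\binom{n-r}{r-s}^{-1/2}\binom{r}{s}^{-1/2}\bigr)$ is therefore exactly $p \gg 1/\sqrt{d}$, which is the form required in Theorem \ref{Pr1}.

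Second, the spectral ratio. Since $(r-s)(n-1)\ge r(n-r)$ holds for every $n$, Theorem \ref{BrT} gives, for each $n$,
$$\frac{\lambda(n)}{d(n)} = \frac{|sn-r^2|}{r(n-r)}.$$
Set $c := \limsup_{n} \frac{|sn-r^2|}{r(n-r)}$. Then $\limsup \lambda/d \le c$, and Theorem \ref{Pr1} yields $\limsup q^*(G_p(n,r,s)) \le c$ almost surely, which is the claimed bound.

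The main obstacle is the implicit requirement $c<1$. This hypothesis of Theorem \ref{ProbMain} is used in Lemma \ref{LeA}, through $d-\lambda \ge Cd$. If $c \ge 1$ there is nothing to prove, since modularity never exceeds $1$ and so the inequality holds trivially. So assume $c<1$; then for large $n$ we have $\lambda/d \le c' < 1$ for some constant $c'$. Theorem \ref{Pr1} now applies, after passing to the tail of the sequence, which does not affect a $\limsup$.

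There is one more mild point. The graphs must be indexed by their vertex count $\binom{n}{r}$ rather than by $n$, so I would relabel the sequence over the infinite index set $S = \{\binom{n}{r}\}$ that Theorem \ref{ProbMain} allows. Finally, I need $d \to \infty$ so that $p \gg 1/\sqrt{d}$ is meaningful. If instead $d$ stays bounded, the hypothesis on $p$ is impossible to satisfy with $p \le 1$, and the statement holds vacuously.
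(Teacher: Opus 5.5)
Your proposal is correct and follows the paper's argument exactly. The paper proves this in one line: it feeds $d=\binom{r}{s}\binom{n-r}{r-s}$ and the ratio $\lambda/d=\frac{|sn-r^2|}{r(n-r)}$ from Theorem \ref{BrT} into Theorem \ref{Pr1}, which is Theorem \ref{ProbMain} combined with Theorem \ref{ThModEigen}. Your extra remarks supply details the paper leaves implicit: the typo $G_p(n,r,0)$, the trivial case $\limsup \lambda/d \ge 1$ (needed because Theorem \ref{ProbMain} requires $\limsup\lambda/d<1$), and $d\to\infty$ being forced by $p\le 1$. The one small slip is that relabeling by $\binom{n}{r}$ need not be injective, but no relabeling is needed: the proof of Theorem \ref{ProbMain} still works when indexed by $n$, since the vertex counts satisfy $\binom{n}{r}\ge n$ and the failure probabilities stay summable.
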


\begin{theorem}
\label{PC2}
Let $p = p(s) = \omega\left(\frac{\sqrt{s}}{2^{2s}}\right)$. Then almost surely
$$q^{*}(G_p(4s, 2s, s)) \to 0, n \to \infty.$$
\end{theorem}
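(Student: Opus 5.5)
This is a direct application of Theorem \ref{Pr1} to the sequence $G_s = G(4s, 2s, s)$, indexed by $s$ (the vertex count $\binom{4s}{2s}$ runs over an infinite subset of $\mathbb{N}$). The plan has three steps: pin down the degree, check the spectral hypothesis with $c$ arbitrarily small, and check the hypothesis on $p$.

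First, the degree. A neighbour of a vertex $v$ shares exactly $s$ ones with it. So we choose $s$ of the $2s$ ones of $v$ and $s$ of the $2s$ zeros, giving
$$d = d(G(4s,2s,s)) = \binom{2s}{s}^2 .$$
By the standard estimate $\binom{2s}{s} = \Theta\!\left(\frac{2^{2s}}{\sqrt{s}}\right)$, we get
$$\frac{1}{\sqrt d} = \binom{2s}{s}^{-1} = \Theta\!\left(\frac{\sqrt s}{2^{2s}}\right).$$
Hence the assumption $p = \omega\!\left(\frac{\sqrt s}{2^{2s}}\right)$ is exactly $p \gg \frac{1}{\sqrt d}$, which is what Theorem \ref{Pr1} requires.

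Second, the spectral ratio. By Theorem \ref{Th2}, for all large $s$,
$$\frac{\lambda(G_s)}{d(G_s)} = \frac{1}{4s-2},$$
since $E_s(0) = d$ (this is also recorded in Corollary \ref{C2}). Therefore $\limsup \lambda/d = 0$. In particular, for every fixed $c > 0$ the hypothesis $\limsup \lambda/d \le c$ of Theorem \ref{Pr1} holds.

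Third, the conclusion. Theorem \ref{Pr1} gives, for each fixed $c > 0$,
$$\mathbf{P}\left(\limsup q^*(G_p(4s,2s,s)) \le c\right) = 1.$$
Apply this with $c = 1/m$ for $m \in \mathbb{N}$ and intersect the countably many almost sure events. This gives $\limsup q^* \le 0$ almost surely. Since $q^* \ge 0$ always (the trivial one-part partition has modularity $0$), it follows that $q^*(G_p(4s,2s,s)) \to 0$ almost surely.

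The only points needing any care are the binomial asymptotics for $d$ and the countable intersection over $c$; I expect no real obstacle. The one point to state explicitly is that the growth parameter in the statement, written "$n \to \infty$", is really $s \to \infty$.
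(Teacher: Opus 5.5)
Your proposal is correct and follows the paper's argument: the paper obtains this theorem by combining Theorem \ref{Th2}, which gives $\lambda/d = \frac{1}{4s-2} \to 0$ with $d = \binom{2s}{s}^2$ so that $1/\sqrt{d} = \Theta(\sqrt{s}/2^{2s})$, with Theorem \ref{Pr1}. Your extra steps are sound and only make explicit what the paper leaves implicit: indexing by $s$ through the infinite set $S$, intersecting over $c = 1/m$ (you could also take $c = 0$ directly), and using $q^* \ge 0$.
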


\begin{theorem}
\label{PC1}
Let $r > s \geq 1$, $p = p(n) = \omega\left(n^{-\frac{r - s}{2}}\right)$. Then almost surely 
$$\limsup\limits_{n \to \infty}{q^*(G_p(n, r, s))} \leq \frac{s}{r}.$$
\end{theorem}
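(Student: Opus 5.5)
This is a direct application of Theorem \ref{Pr1} to the sequence $G_n = G(n,r,s)$ with $r > s \geq 1$ fixed. Three inputs are needed: the degree $d$, the limiting ratio $c$ for $\lambda/d$, and a check that $p \gg 1/\sqrt{d}$. Theorem \ref{Pr1} then gives almost surely $\limsup q^*(G_p(n,r,s)) \leq c$.

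\emph{Degree.} A vertex $v$ is adjacent to $u$ iff $|u\cap v| = s$. Counting such $u$ gives
$$d = d(G(n,r,s)) = \binom{r}{s}\binom{n-r}{r-s}.$$
For fixed $r,s$ this is $\Theta(n^{r-s})$, so $1/\sqrt{d} = \Theta\left(n^{-\frac{r-s}{2}}\right)$. The hypothesis $p = \omega\left(n^{-\frac{r-s}{2}}\right)$ is therefore the same as $p \gg 1/\sqrt{d}$.

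\emph{Spectral ratio.} The sequence is indexed by $n$ over some infinite $S$; here we may take all $n \geq 2r - s$, where $G(n,r,s)$ is nonempty and $d \geq 1$. Part 1 of Theorem \ref{ThMerge} gives, for all $n > N$,
$$\lambda(G(n,r,s)) = E_{r-s}(1) \sim \frac{s}{r}\,d(G(n,r,s)).$$
Hence $\limsup_{n\to\infty} \lambda(n)/d(n) = \frac{s}{r} =: c$. Since $s < r$, $c < 1$, which is exactly the hypothesis under which Theorem \ref{ProbMain}, and hence Theorem \ref{Pr1}, applies.

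\emph{Conclusion.} Theorem \ref{Pr1} combines Theorem \ref{ProbMain} with Theorem \ref{ThModEigen}, and the latter needs $d \geq 3$. This holds for large $n$ because $d$ grows polynomially with $r - s \geq 1$; finitely many initial $n$ do not affect a $\limsup$. Applying Theorem \ref{Pr1} then gives almost surely $\limsup_{n\to\infty} q^*(G_p(n,r,s)) \leq \frac{s}{r}$.

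No step is a real obstacle. The only points needing care are the equivalence of the threshold $\omega\left(n^{-\frac{r-s}{2}}\right)$ with $\omega(1/\sqrt{d})$, which uses that constant factors like $\binom{r}{s}$ and $(r-s)!$ are irrelevant, and confirming that the ratio limit from Theorem \ref{ThMerge} is strictly below $1$.
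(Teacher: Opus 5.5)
Your proposal is correct and is the paper's own argument: the paper derives this theorem by applying Theorem \ref{Pr1} with $c = s/r < 1$ from part 1 of Theorem \ref{ThMerge}, where $d = \binom{r}{s}\binom{n-r}{r-s} = \Theta(n^{r-s})$ makes the threshold $\omega(n^{-\frac{r-s}{2}})$ equivalent to $p \gg 1/\sqrt{d}$. One harmless imprecision is that Theorem \ref{ProbMain} indexes its graphs by their number of vertices $\binom{n}{r}$, not by $n$; since $n \mapsto \binom{n}{r}$ is strictly increasing, the relabeling changes nothing.
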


\begin{theorem}
\label{PC4}
Let $n \gg r(n) \gg s(n)$, $p = p(n) = \omega\left(\frac{1}{\sqrt{\binom{n - r}{r - s}\binom{r}{s}}}\right)$. Then almost surely 
$$
q^*(G_p(n, r, s)) \to 0, n \to \infty.
$$
\end{theorem}

Finally, the following lower bound on the modularity of $G(n, r, s)$ is an immediate corollary of Theorems \ref{ProbMain} and \ref{P4}.

\begin{theorem}
\label{PC6}
Let $r > s \geq 1$, $p = \omega\left(n^{-\frac{r-s}2}\right)$. Then almost surely 
$$\liminf_{n \to \infty}{q^*(G_p(n, r, s))} \geq \frac{s}{2r - s}\left(1 + \left(\frac{r - s}{r}\right)^{\frac{2r}{s}}\right).$$
\end{theorem}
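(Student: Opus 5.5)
The plan is to apply Theorem \ref{ProbMain} to the sequence $G_n = G(n,r,s)$ with $r>s\geq 1$ fixed, and then transfer the lower bound of Theorem \ref{P4} from $G(n,r,s)$ to $G_p(n,r,s)$.

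\textbf{Step 1: the spectral hypothesis.} By part 1 of Theorem \ref{ThMerge}, for all sufficiently large $n$ we have $\lambda(G(n,r,s)) \sim \frac{s}{r}d(G(n,r,s))$. Hence
$$\limsup_{n\to\infty}\frac{\lambda}{d} = \frac{s}{r} < 1,$$
so the hypothesis on $\lambda/d$ in Theorem \ref{ProbMain} holds. Here the number of vertices is $N=\binom{n}{r}$, which is strictly increasing in $n$. Thus indexing by $n$ instead of by the vertex count is harmless: the proof of Theorem \ref{ProbMain} only uses that the vertex count tends to infinity, so that the Borel--Cantelli-type products over $n \ge N$ converge.

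\textbf{Step 2: the density hypothesis.} The degree is
$$d=\binom{r}{s}\binom{n-r}{r-s}=\Theta\bigl(n^{r-s}\bigr),$$
so $d^{-1/2}=\Theta\bigl(n^{-(r-s)/2}\bigr)$. Therefore $p=\omega\bigl(n^{-(r-s)/2}\bigr)$ is equivalent to $p\gg d^{-1/2}$, which is exactly the condition of Theorem \ref{ProbMain}.

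\textbf{Step 3: conclusion.} Theorem \ref{ProbMain} gives that almost surely $|q^*(G(n,r,s))-q^*(G_p(n,r,s))|\to 0$. On this almost-sure event,
$$\liminf_{n\to\infty} q^*(G_p(n,r,s)) = \liminf_{n\to\infty} q^*(G(n,r,s)) \geq \frac{s}{2r-s}\left(1+\left(\frac{r-s}{r}\right)^{\frac{2r}{s}}\right),$$
where the last inequality is Theorem \ref{P4}.

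The only point needing care is the indexing issue in Step 1. The lemmas behind Theorem \ref{ProbMain} bound failure probabilities by $e^{-cN\log^2 f}$ with $N$ the vertex count. Since $N=\binom{n}{r}\geq n$, these bounds are summable over $n$ as well, so the almost-sure statements transfer directly. Everything else is a routine substitution.
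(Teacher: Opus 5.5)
Your proposal is correct and follows the paper's route: the paper also obtains this as an immediate corollary of Theorem \ref{ProbMain} combined with the lower bound of Theorem \ref{P4}. As you do, the condition $\limsup \lambda/d = s/r < 1$ comes from part 1 of Theorem \ref{ThMerge}, and $d=\Theta(n^{r-s})$ turns $p=\omega(n^{-(r-s)/2})$ into $p\gg d^{-1/2}$. Your indexing caveat is harmless but unnecessary, since Theorem \ref{ProbMain} is stated for an arbitrary infinite index set $S$ and $n\mapsto\binom{n}{r}$ is injective.
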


\subsection{Giant component in Johnson graphs}

In \cite{GComp} (see Theorem 1) the following result was obtained.

\begin{theorem}
\label{GCGeneral}
Let $G_n$ be a sequence of $(n, d, \lambda)$-graphs with $\lambda = o(d)$, $n \to \infty$.
\begin{enumerate}
    \item For any $0 < \alpha < 1$ w.h.p. all components of $G_n(\frac{\alpha}{d})$ have size $O(\log(n))$.
    \item For any $\alpha > 1$ w.h.p. there exists a component of size $(1 + o(1))(1 - \frac{\overline{\alpha}}{\alpha})n$ in $G_n(\frac{\alpha}{d})$. Here, $\overline{\alpha}$ is a solution of $xe^{-x} = \alpha e^{-\alpha}$, different from $\alpha$. At the same time, all other components of $G_n(\frac{\alpha}{d})$ have size $O(\log(n))$.
\end{enumerate}
\end{theorem}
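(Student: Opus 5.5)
This theorem is quoted from \cite{GComp}; it is the spectral analogue of the Erd\H os--R\'enyi phase transition. I would prove it by comparing the exploration process of $G_n(\frac{\alpha}{d})$ with a Galton--Watson branching process with $\mathrm{Poisson}(\alpha)$ offspring. The condition $\lambda = o(d)$ enters only through the expander mixing lemma (the two-sided form of Lemma \ref{Eig}):
$$\Bigl|e(S,T)-\tfrac{d}{n}|S||T|\Bigr|\le \lambda\sqrt{|S||T|}.$$

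\textbf{Subcritical case.} I would run a breadth-first exploration from a fixed vertex $v$ of $G_n(p)$ with $p=\alpha/d$. Each newly exposed vertex has at most $d$ unexplored neighbours in $G_n$, each retained with probability $p$. Hence the exploration is stochastically dominated by a branching process with $\mathrm{Bin}(d,\alpha/d)$ offspring, whose mean is $\alpha<1$. A Chernoff bound on the sum of $k$ such variables shows that the component of $v$ exceeds $C\log n$ with probability $o(1/n)$. A union bound over all $v$ then finishes part 1. No spectral information is needed here.

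\textbf{Supercritical case, small components.} For $\alpha>1$ I would first show that, up to size $o(n)$ (indeed up to $K\log n$ and somewhat beyond), the exploration from $v$ is well approximated from below by a $\mathrm{Bin}(d-o(d),\alpha/d)$ branching process. This holds because a set $W$ of size $o(n)$ sends all but $o(d|W|)$ of its edges outside itself: by the mixing lemma, $2e(W)\le \frac{d}{n}|W|^2+\lambda|W| = o(d|W|)$. It follows that the probability that $v$ lies in a component of size at least $K\log n$ tends to the survival probability $1-\overline{\alpha}/\alpha$ of the Poisson$(\alpha)$ branching process. Moreover, components of size between $K\log n$ and $n^{2/3}$ (say) are unlikely, since a surviving exploration grows geometrically.

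\textbf{Supercritical case, one giant.} The main obstacle is showing that all ``large'' vertices lie in one component; this is where the expansion $\lambda=o(d)$ is used essentially. I would use a sprinkling argument: write $p=p_1+p_2-p_1p_2$ with $p_2=\varepsilon/d$. After round one, suppose the large components are partitioned into two families $S,T$ with $|S|,|T|\ge \delta n$. The mixing lemma gives $e_{G}(S,T)\ge \frac{d}{n}|S||T|-\lambda n=\Omega(dn)$. I would pass to edge-disjoint paths of length at most 2 to get enough independent connections. The probability that no sprinkled edge joins $S$ and $T$ is at most $(1-\varepsilon/d)^{\Omega(dn)}=e^{-\Omega(n)}$. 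The number of such bipartitions is at most $2^{n/(K\log n)}$ since there are at most $n/(K\log n)$ large components, so a union bound shows that w.h.p.\ the large components merge. Second-moment concentration of the number of vertices in large components then yields giant size $(1+o(1))(1-\overline{\alpha}/\alpha)n$. Letting $\varepsilon\to 0$ handles the perturbation of $\alpha$ introduced by sprinkling.
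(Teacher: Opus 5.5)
The paper gives no proof of this statement; it quotes it from \cite{GComp} (Frieze--Krivelevich--Martin). Your outline follows the standard route used there: domination by a $\mathrm{Bin}(d,\alpha/d)$ branching process for part 1, the mixing lemma to make small explored sets tree-like, an excluded range of component sizes, and sprinkling plus a second-moment count. Part 1 goes through as you describe, and so does the estimate $(1+o(1))\rho(\alpha)n$ for the largest component, where $\rho(\alpha)=1-\overline{\alpha}/\alpha$. Two small points. First, $2e(W)=o(d|W|)$ is an average statement, so the lower coupling must be cumulative: after $t$ steps at least $dt-e(D_t,D_t)$ edges have been examined, $D_t$ being the discovered set. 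It does not hold vertex by vertex. Second, the paths of length $2$ are unnecessary: $S$ and $T$ are unions of distinct round-one components, and the direct bound $e_G(S,T)\ge d\delta^2 n-\lambda n$ is exactly what your computation uses.

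The genuine gap is the final clause of part 2, that all other components have size $O(\log n)$; your proposal never establishes it. Your union bound covers only bipartitions with both sides of mass at least $\delta n$. So it shows that all but $O(\delta n)$ of the round-one ``large'' vertices end up in one component, not that all large components merge. Moreover, sprinkled edges can glue round-one small components into new large ones. By the second-moment count, the vertices of $G_n(\alpha/d)$ in non-giant components of size $\ge K\log n$ number at most $(\rho(\alpha)-\rho(\alpha-\varepsilon)+O(\delta)+o(1))n$, which is at best $o(n)$. Since your excluded range stops at $n^{2/3}$, a second component of size $n^{3/4}$, or $\eta n$ with $\eta$ small, is not ruled out.

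The missing step is to exclude the whole range $[K\log n,\epsilon_0 n]$, for a constant $\epsilon_0=\epsilon_0(\alpha)>0$, in $G_n(\alpha/d)$ itself. Apply the mixing lemma to the component rather than to the exploration. If $|C(v)|=k\le\epsilon_0 n$, then $e(C,\overline{C})\ge dk-\frac{d}{n}k^2-\lambda k\ge dk(1-\epsilon_0-o(1))$, and all these edges were examined and failed. Hence $\mathbf{P}(|C(v)|=k)\le\mathbf{P}\bigl(\mathrm{Bin}(\lceil dk(1-\epsilon_0-o(1))\rceil,\alpha/d)\le k-1\bigr)\le e^{-ck}$ once $\alpha(1-\epsilon_0)>1$. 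A union bound over $v$ and $k\ge K\log n$ is then $o(1)$; equivalently, count trees, giving $n(e\alpha)^{k-1}e^{-\alpha k(1-\epsilon_0-o(1))}$. Every non-giant component then has size $O(\log n)$ or at least $\epsilon_0 n$. The latter is excluded by the mass count above once $\varepsilon,\delta$ are small relative to $\epsilon_0$. This linear gap also simplifies the sprinkling step, since there are then at most $1/\epsilon_0$ large components.
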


This means that our results on the spectrum of Johnson graphs also imply the $``$giant component$"$ theorems. For example, the following result, obtained in \cite{Yar1, Yar2, Yar3} trivially flows from Theorem \ref{Th2}.

\begin{theorem}
\label{GC421}
Let $G_s = G(4s, 2s, s), G_s(p_s) = G_{p_s}(4s, 2s, s)$, $d_s = d(G_s)$, $N_s = \binom{4s}{2s}$. Then
\begin{enumerate}
    \item For any $0 < \alpha < 1$ w.h.p. all components of $G_s(\frac{\alpha}{d_s})$ have size $O(\log(N_s))$.
    \item For any $\alpha > 1$ w.h.p. there exists a component of size $(1 + o(1))(1 - \frac{\overline{\alpha}}{\alpha})N_s$ in $G_s(\frac{\alpha}{d_s})$. Here, $\overline{\alpha}$ is a solution of $xe^{-x} = \alpha e^{-\alpha}$, different from $\alpha$. At the same time, all other components of $G_s(\frac{\alpha}{d_s})$ have size $O(\log(N_s))$.
\end{enumerate}
\end{theorem}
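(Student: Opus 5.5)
The statement is Theorem \ref{GCGeneral} specialized to the sequence $G_s = G(4s,2s,s)$. So the plan is simply to check that this sequence meets the hypotheses of Theorem \ref{GCGeneral}, with $N_s$ playing the role of the number of vertices $n$ and $d_s$ the role of the degree $d$.

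The steps are as follows.

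First, $G_s$ is a regular graph on $N_s=\binom{4s}{2s}$ vertices, since it is a Johnson graph. Its degree is
$$d_s=\binom{2s}{s}\binom{2s}{s}=E_s(0).$$
Clearly $N_s\to\infty$ as $s\to\infty$.

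Second, I will show that $\lambda(G_s)=o(d_s)$. By Theorem \ref{Th2}, for all $s>N$ we have
$$\lambda(G_s)=|E_s(2)|=\frac{1}{4s-2}E_s(0)=\frac{d_s}{4s-2},$$
so $\lambda(G_s)/d_s\to 0$.

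Third, I need to handle the fact that Theorem \ref{GCGeneral} is stated for a sequence indexed by the number of vertices $n$, while ours is indexed by $s$. The map $s\mapsto N_s$ is strictly increasing, so $(G_s)_{s>N}$ is a sequence of $(N_s,d_s,\lambda_s)$-graphs along an infinite set of vertex counts. The proof in \cite{GComp} only uses $n\to\infty$ and $\lambda/d\to 0$, so it applies to such a subsequence. If a literal reading requires one graph for every $n$, I would fill the missing values of $n$ with arbitrary admissible graphs, for instance random regular ones. Whp statements along the full sequence then restrict to our subsequence.

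Applying both parts of Theorem \ref{GCGeneral} with $p=\alpha/d_s$ gives parts 1 and 2 verbatim, with $O(\log N_s)$ and $(1+o(1))(1-\overline{\alpha}/\alpha)N_s$.

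There is no real obstacle here. The only point needing care is the indexing in the third step, and the spectral input is entirely supplied by Theorem \ref{Th2}.
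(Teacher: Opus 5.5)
Your proposal is correct and matches the paper's argument: the paper simply notes that Theorem \ref{GC421} follows from Theorem \ref{GCGeneral} once Theorem \ref{Th2} gives $\lambda(G_s) = d_s/(4s-2) = o(d_s)$. Your extra care about indexing by $s$ rather than by the vertex count $N_s$ is a harmless refinement that the paper leaves implicit.
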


Similar theorems can be obtained for other parameters.

\begin{theorem}
\label{CorKG2}
Let $n \gg r(n)$. Denote $G_n = G(n, r, 0)$, $G_n(p_n) = G_p(n, r, 0)$, $d_n = d(G(n, r, 0))$, $N_n = \binom{n}{r}$. Then
\begin{enumerate}
    \item For any $0 < \alpha < 1$ w.h.p. all components of $G_n(\frac{\alpha}{d_n})$ have size $O(\log(N_n))$.
    \item For any $\alpha > 1$  w.h.p. there exists a component of size $(1 + o(1))(1 - \frac{\overline{\alpha}}{\alpha})N_n$ in $G_n(\frac{\alpha}{d_n})$. Here, $\overline{\alpha}$ is a solution of $xe^{-x} = \alpha e^{-\alpha}$, different from $\alpha$. At the same time, all other components of $G_n(\frac{\alpha}{d_n})$ have size $O(\log(N_n))$.
\end{enumerate}
\end{theorem}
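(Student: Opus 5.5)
The plan is to deduce Theorem \ref{CorKG2} directly from Theorem \ref{GCGeneral}. The only thing to check is its hypothesis: the graphs $G_n = G(n, r, 0)$ must form a sequence of $(N_n, d_n, \lambda_n)$-graphs with $\lambda_n = o(d_n)$ and $N_n \to \infty$. Once this is verified, both parts of Theorem \ref{CorKG2} are literally the two parts of Theorem \ref{GCGeneral}, with $n$ replaced by the number of vertices $N_n = \binom{n}{r}$ and $d$ replaced by $d_n = \binom{n-r}{r}$.

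The key step is the spectral bound, which comes from Theorem \ref{LovT}:
$$\lambda(G(n, r, 0)) = \frac{r}{n - r}\, d(G(n, r, 0)).$$
Since $n \gg r(n)$, we have $\frac{r}{n-r} = \frac{r/n}{1 - r/n} \to 0$, so $\lambda_n = o(d_n)$.

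It remains to handle some routine matters.
\begin{enumerate}
\item The graph $G(n,r,0)$ is $d_n$-regular with $d_n = \binom{n-r}{r}$; this is positive and indeed tends to infinity, since $n - r \ge 2r$ eventually.
\item $N_n = \binom{n}{r} \to \infty$. If $r \ge 1$, then $N_n \ge n$; the degenerate case $r = 0$ is excluded, or trivial.
\item Theorem \ref{GCGeneral} is stated for a sequence indexed by the number of vertices. Here the sequence is indexed by $n$, with vertex count $N_n$ strictly increasing to infinity. As in Theorem \ref{ProbMain}, we can view the graphs as a sequence indexed by an infinite subset of $\mathbb{N}$, and the asymptotic statements ``w.h.p.'' and $O(\log N_n)$ transfer verbatim.
\end{enumerate}

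I do not expect a real obstacle. The only point needing care is the reindexing in the last item: Theorem \ref{GCGeneral} is applied along the subsequence of vertex counts $N_n$, not along $n$. The asymptotic conclusions are insensitive to this, because $N_n$ is a strictly increasing function of $n$ once $r$ is fixed for each $n$, or at least because $N_n \to \infty$.
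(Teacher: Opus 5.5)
Your proposal is correct and matches the paper's argument. The paper states Theorem \ref{CorKG2} without a separate proof, as a direct consequence of Theorem \ref{GCGeneral} once Lov\'asz's formula (Theorem \ref{LovT}) gives $\lambda(G(n,r,0)) = \frac{r}{n-r}d_n = o(d_n)$ for $n \gg r$. One small slip: $N_n=\binom{n}{r(n)}$ need not be strictly increasing when $r$ varies with $n$. Your fallback that $N_n\to\infty$ is enough, because a w.h.p.\ statement can be checked along subsequences with strictly increasing vertex counts.
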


\begin{theorem}
\label{Cor3}
Let $n \gg r(n) \gg s(n)$. Denote $G_n = G(n, r, s), G_n(p_n) = G_p(n, r, s)$, $d_n = d(G(n, r, s))$, $N_n = \binom{n}{r}$. Then
\begin{enumerate}
    \item For any $0 < \alpha < 1$ w.h.p. all components of $G_n(\frac{\alpha}{d_n})$ have size $O(\log(N_n))$.
    \item For any $\alpha > 1$ w.h.p. there exists a component of size $(1 + o(1))(1 - \frac{\overline{\alpha}}{\alpha})N_n$ in $G_n(\frac{\alpha}{d_n})$. Here, $\overline{\alpha}$ is a solution of $xe^{-x} = \alpha e^{-\alpha}$, different from $\alpha$. At the same time, all other components of $G_n(\frac{\alpha}{d_n})$ have size $O(\log(N_n))$.
\end{enumerate}
\end{theorem}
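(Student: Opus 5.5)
The statement to prove is Theorem \ref{Cor3}. My plan is to derive it from Theorem \ref{GCGeneral}, applied to the sequence $G_n = G(n,r,s)$ with $N_n = \binom{n}{r}$ vertices and degree $d_n = \binom{r}{s}\binom{n-r}{r-s}$. This needs two inputs. First, $N_n \to \infty$, which is immediate since $r \to \infty$ and $n - r \to \infty$. Second, and this is the real content, $\lambda(G_n) = o(d_n)$ along the whole sequence under the sole hypothesis $n \gg r \gg s$. Once both hold, both parts of the statement are exactly the two parts of Theorem \ref{GCGeneral} with $n$ replaced by $N_n$.

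To get $\lambda = o(d)$, I will split according to how $r$ compares with $\sqrt{n}$. The cleanest route is a subsequence argument. Suppose $\lambda(G_n)/d_n \not\to 0$. Then there are $\varepsilon > 0$ and a subsequence along which $\lambda/d \ge \varepsilon$. Passing to a further subsequence, either $r^2/n \to \infty$ or $r = O(\sqrt{n})$.
\begin{itemize}
\item In the first case, part 2 of Theorem \ref{ThMerge} gives $\lambda = o(d)$ along the subsequence.
\item In the second case, part 3 of Theorem \ref{ThMerge} gives $\lambda = O\left(\frac{s}{r}d\right) = o(d)$, because $r \gg s$.
\end{itemize}
Either way we contradict $\lambda/d \ge \varepsilon$. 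For this to be legitimate, the proofs of Theorem \ref{ThMerge} have to depend only on asymptotic relations among $n$, $r$ and $s$, and so remain valid along subsequences; I expect this to be true and will check it. This also covers the general regime mentioned in the Remark after Theorem \ref{ThMerge}, since the two cases exhaust all possibilities along a subsequence.

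The step I expect to be the main obstacle is this dichotomy. Part 3 of Theorem \ref{ThMerge} assumes $s \ge 1$, so the case $s = 0$ needs separate handling. There I would use Theorem \ref{LovT} directly: $\lambda/d = \frac{r}{n-r} \to 0$ because $n \gg r$. With that, the remaining steps are just reading off the hypotheses of Theorem \ref{GCGeneral}, and the threshold $\frac{\alpha}{d_n}$ together with the giant component size $(1+o(1))\left(1-\frac{\overline{\alpha}}{\alpha}\right)N_n$ transfer verbatim.
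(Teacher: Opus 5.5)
Your proposal is correct, and its skeleton is the one the paper intends: feed $\lambda(G(n,r,s)) = o(d(G(n,r,s)))$ into Theorem~\ref{GCGeneral} on $N_n=\binom{n}{r}$ vertices. The difference is where the spectral input comes from.

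The paper writes no proof of this theorem. For the full range $n \gg r \gg s$, the bound $\lambda = o(d)$ is only asserted in the Remark after Theorem~\ref{ThMerge}, with its proof called ``quite technical'' and omitted. This is because parts 2 and 3 of Theorem~\ref{ThMerge} each carry an extra hypothesis: $r^2 \gg n$ and $r = O(\sqrt{n})$ respectively. Your subsequence argument uses the fact that every subsequence has a further subsequence on which $r^2/n$ either tends to $\infty$ or stays bounded. This shows that the two parts already exhaust all cases, so the general bound, and hence the theorem, follows from results actually proved in Section 2, with no need for the omitted argument.

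Two small tightenings:
\begin{enumerate}
\item You need not re-check the proofs of Theorem~\ref{ThMerge} to work along subsequences. Extend the subsequence to a full sequence satisfying the relevant hypotheses and use the theorem as a black box. For example, at the remaining indices set $s=1$ and $r=\lfloor n^{3/4}\rfloor$ for part 2, or $r=\lfloor n^{1/4}\rfloor$ for part 3.
\item If $s(n)$ may vanish for some $n$, make the split $s=0$ versus $s\ge 1$ part of the same subsequence extraction. Theorem~\ref{LovT} then handles the $s=0$ subsequence, exactly as for Theorem~\ref{CorKG2}.
\end{enumerate}
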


\subsection{Hamiltonicity}

In this subsection we obtain a new result about the existence of a cycle that passes through each vertex of $G(n, r, s)$ exactly once (also known as the hamiltonicity of $G(n, r, s)$ graphs). Before we start, let us introduce two parameters of a graph that help us in this proof.

\begin{definition}
Vertex connectivity of a graph $G$ is the minimum number of vertices we need to delete from $G$ to make the remaining graph disconnected (or $|V_G| - 1$ if we cannot obtain a disconnected graph by deleting some vertices of $G$). The notation is $\kappa(G)$.
\end{definition}

\begin{definition}
Independence number of a graph $G$ is the maximum number of vertices in $G$ such that there is no edge between these vertices. The notation is $\alpha(G)$.
\end{definition}

First of all, let us prove the following general bound on the vertex connectivity of pseudo-random graphs. 

\begin{theorem}\label{VerCon}
Let $G$ be an $(n, d, \lambda)$-graph with $\frac{(1 + d/n)^2}{2(1 - d/n)}\frac{\lambda^2}{d^2} + \frac{\lambda}{d} + \frac{d}{n} \leq 1$. Then $\kappa(G) \geq (1 - \frac{\lambda^2}{d^2\alpha^2})d$, where $\alpha$ is a (unique) positive solution of the equation $\frac{(1 + d/n)^2}{2(1 - d/n)}x^2 + x + \frac{d}{n} + \sqrt{x^2 - \frac{\lambda^2}{d^2}} = 1$.
\end{theorem}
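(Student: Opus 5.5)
The plan is the standard separator argument for pseudo-random graphs, as in \cite{Kriv06}. Suppose $S \subset V_G$ separates $G$, and put $|S| = \sigma n$. Then $V_G \setminus S = A \sqcup B$ with $A, B$ nonempty and $e(A, B) = 0$. Write $a = |A|/n$ and $b = |B|/n$, so $a + b + \sigma = 1$, and assume without loss of generality $a \le b$. We will show that $\sigma < (1 - \frac{\lambda^2}{d^2\alpha^2})\frac{d}{n}$ leads to a contradiction.

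The only tool is the expander mixing lemma for $(n,d,\lambda)$-graphs. It is a sharpening of Lemma \ref{Eig}, proved by the same eigenvector decomposition. In the form we need: for disjoint $X, Y$,
$$\Bigl|e(X,Y) - \frac{d|X||Y|}{n}\Bigr| \le \lambda\sqrt{|X||Y|\Bigl(1-\frac{|X|}{n}\Bigr)\Bigl(1-\frac{|Y|}{n}\Bigr)},$$
and $2e(X) \le \frac{d|X|^2}{n} + \lambda|X|\bigl(1 - \frac{|X|}{n}\bigr)$.

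\textbf{Step 1 (no edges between the sides).} Applying the lemma to $A, B$ with $e(A,B) = 0$ gives $d^2 ab \le \lambda^2(1-a)(1-b)$. Together with $a \le b$ this forces $a$ to be small. In particular $ab/((1-a)(1-b)) \le \lambda^2/d^2$.

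\textbf{Step 2 (every vertex of $A$ has its neighbourhood inside $A \cup S$).} Double counting gives $d|A| = 2e(A) + e(A, S)$. Bound $2e(A)$ by the second form of the lemma and $e(A, S)$ by the first. Dividing by $dn\,a$ yields
$$1 \le a + \frac{\lambda}{d}(1-a) + \sigma + \frac{\lambda}{d}\sqrt{\frac{\sigma(1-a)(1-\sigma)}{a}}.$$
This is where the quantity $\frac{\lambda}{d}$ in the statement will enter. We also have the trivial bound $|A| \ge d + 1 - |S|$, i.e.\ $a \ge \frac{d}{n} - \sigma$. This lower bound keeps the last term from blowing up.

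\textbf{Step 3 (solve for $\sigma$).} Substitute $b = 1 - a - \sigma$ into Step 1 to get an upper bound on $a$ in terms of $\sigma$ and $\lambda/d$. Combine it with $a \ge d/n - \sigma$ and the inequality of Step 2. The aim is a single inequality in the variable $x$, where $x$ is defined by $\frac{\lambda}{d} = x\sqrt{(\text{something in }a,\sigma)}$; equivalently $\frac{\lambda^2}{d^2 x^2}$ plays the role of the deficit $1 - \frac{\sigma n}{d}$. I expect $\sqrt{x^2 - \lambda^2/d^2}$ to appear as the square root produced when eliminating $a$ from the quadratic of Step 1. The factor $\frac{(1+d/n)^2}{2(1-d/n)}$ should come from bounding $(1-a)(1-b)$ and $(1-\sigma)$ using $\sigma \le d/n$ and $a \le b$. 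The conclusion to aim for: if $\sigma n < (1 - \frac{\lambda^2}{d^2 x^2})d$, then
$$\frac{(1+d/n)^2}{2(1-d/n)}x^2 + x + \frac{d}{n} + \sqrt{x^2 - \frac{\lambda^2}{d^2}} > 1.$$
The left side is increasing in $x$, and the hypothesis of the theorem says that at $x = \lambda/d$ it is at most $1$. Hence the root $\alpha$ exists, is unique, and satisfies $\alpha \ge \lambda/d$. Monotonicity then turns the failure of the inequality at $x = \alpha$ into the claimed bound $\kappa(G) \ge (1 - \frac{\lambda^2}{d^2\alpha^2})d$.

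The main obstacle is Step 3. It requires finding the right parametrization, and handling the regime where $a$ is near $d/n - \sigma$ separately from the regime where $a$ is larger, so that the elimination produces exactly the stated equation rather than a weaker one. I would first carry out the extremal case $a = d/n - \sigma$ explicitly, and then check that larger $a$ only makes the Step 2 inequality harder to satisfy.
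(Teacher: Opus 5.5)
\textbf{Gap: Step 3 is not carried out, and the plan for it points the wrong way.}

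Your Steps 1 and 2 are exactly the ingredients the paper uses, with your $A,B$ playing the roles of its $U,W$:
\begin{itemize}
\item the mixing lemma applied to the two non-adjacent sides;
\item the count $|A|\ge d+1-|S|$;
\item the identity $d|A|=2e(A)+e(A,S)$, with both terms bounded by the mixing lemma.
\end{itemize}
But the argument stops there. Step 3 contains the whole content of the proof, and you give only a forecast for it. Three parts of that forecast are off:
\begin{itemize}
\item The root $\sqrt{x^2-\lambda^2/d^2}$ does not come from eliminating $a$ from the quadratic of Step 1.
\item No reparametrization of $x$ and no case split in $a$ is needed.
\item The final check you propose is false as stated. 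The right-hand side of your Step 2 inequality equals $a(1-\lambda/d)+\frac{\lambda}{d}+\sigma+(\text{root term})$. The summand $a(1-\lambda/d)$ grows with $a$, so the right-hand side is not decreasing in $a$ in general. Larger $a$ therefore does not make the inequality harder to satisfy.
\end{itemize}

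\textbf{The missing idea is to decouple the two occurrences of $a$.} Use the Step 1 upper bound only on the linear term, and the lower bound $a\ge d/n-\sigma$ only inside the square root.

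Take $x=\alpha$ (recall $\alpha\ge\lambda/d$), put $t=\lambda^2/(d^2\alpha^2)$, and suppose $\sigma<(1-t)\frac{d}{n}$.
\begin{itemize}
\item Linear term: since $(1-a)+(1-b)=1+\sigma$, AM--GM gives $(1-a)(1-b)\le(1+\sigma)^2/4$. Also $b\ge(1-\sigma)/2$. So Step 1 gives $a\le\frac{(1+\sigma)^2}{2(1-\sigma)}\frac{\lambda^2}{d^2}\le\frac{(1+d/n)^2}{2(1-d/n)}\alpha^2$.
\item Middle terms: $\frac{\lambda}{d}(1-a)\le\alpha$ and $\sigma<\frac{d}{n}$.
\item Root term: from $a>\frac{d}{n}-\sigma>t\frac{d}{n}$ we get $\sigma/a\le(1-t)/t$. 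Hence the last term of Step 2 is at most $\frac{\lambda}{d}\sqrt{(1-t)/t}=\sqrt{\alpha^2-\lambda^2/d^2}$.
\end{itemize}
Summing these in Step 2 gives $1<f(\alpha)=1$, where $f$ is the left-hand side of the equation defining $\alpha$. This is a contradiction.

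This is precisely the paper's computation. Monotonicity of $f$ is needed only for the existence and uniqueness of $\alpha\ge\lambda/d$, which you handled correctly.
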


Note, that this is a direct generalization of Theorem 4.1 from \cite{Kriv06}.

\begin{theorem}\label{VerConP}
Let $G$ be an $(n, d, \lambda)$-graph with $d \leq \frac{n}2, \lambda \leq \frac{d}{6}$. Then $\kappa(G) \geq (1 - \frac{36\lambda^2}{d^2})d$.
\end{theorem}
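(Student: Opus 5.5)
We derive Theorem \ref{VerConP} as a corollary of Theorem \ref{VerCon}. Set $t = d/n \le \frac12$ and $\mu = \lambda/d \le \frac16$.

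\textbf{Step 1: check the hypothesis of Theorem \ref{VerCon}.} The coefficient $c(t) = \frac{(1+t)^2}{2(1-t)}$ is increasing on $[0,1)$, so $c(t) \le c(1/2) = \frac94$. Hence
$$
c(t)\mu^2 + \mu + t \le \frac94\cdot\frac1{36} + \frac16 + \frac12 = \frac1{16} + \frac23 < 1 .
$$

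\textbf{Step 2: locate the root $\alpha$.} Consider
$$
F(x) = c(t)x^2 + x + t + \sqrt{x^2 - \mu^2} - 1 \quad \text{on } [\mu, \infty).
$$
$F$ is continuous and strictly increasing. By Step 1, $F(\mu) = c\mu^2 + \mu + t - 1 \le 0$. Hence the positive root $\alpha$ satisfies $\alpha \ge \mu$.

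The conclusion of Theorem \ref{VerCon} is $\kappa(G) \ge (1 - \mu^2/\alpha^2)d$. This is at least $(1 - 36\mu^2)d$ if and only if $\alpha \ge \frac16$. By monotonicity of $F$, it therefore suffices to show $F(1/6) \le 0$ whenever $\mu \le \frac16$ and $t \le \frac12$.

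\textbf{Step 3: the main obstacle.} This is where I expect the difficulty, since $t$ enters $F$ additively and the bound at $t = \frac12$ is tight-ish. The worst case is $\mu = 0$ and $t = \frac12$, where the square root is largest:
$$
F(1/6) \le \frac94\cdot\frac1{36} + \frac16 + \frac12 + \frac16 - 1 = \frac1{16} - \frac16 < 0 .
$$
In general $\sqrt{1/36 - \mu^2} \le \frac16$, and both $c(t)$ and $t$ increase in $t$, so this bound holds uniformly over the whole parameter range. Therefore $\alpha \ge \frac16$, and thus
$$
\kappa(G) \ge \Bigl(1 - \frac{\lambda^2}{d^2\alpha^2}\Bigr)d \ge \Bigl(1 - \frac{36\lambda^2}{d^2}\Bigr)d .
$$

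The only real work is the monotonicity and uniqueness of the root of $F$. I would verify this explicitly, since Theorem \ref{VerCon} asserts that the root is unique. Everything else is a numerical check at the extreme parameter values $t = \frac12$, $\mu \in \{0, \frac16\}$.
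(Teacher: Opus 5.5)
Your proposal is correct and follows the paper's own derivation. You verify the hypothesis of Theorem~\ref{VerCon} via $\frac{(1+d/n)^2}{2(1-d/n)}\le\frac94$, then use that $f$ is increasing on $[\lambda/d,\infty)$ with $f(1/6)\le\frac1{16}+\frac16+\frac12+\frac16<1$ to get $\alpha\ge\frac16$, hence $\kappa(G)\ge(1-36\lambda^2/d^2)d$. The monotonicity and uniqueness of the root, which you flag as remaining work, are immediate: every term of $f$ is nondecreasing on its domain and the linear term is strictly increasing.
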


Indeed, for such parameters we have $\frac{(1 + d/n)^2}{2(1 - d/n)}\frac{\lambda^2}{d^2} + \frac{\lambda}{d} + \frac{d}{n} \leq \frac{1}{16} + \frac{1}{6} + \frac{1}{2} \leq 1$, so Theorem \ref{VerCon} is applicable. Moreover, since the function $f(x) = \frac{(1 + d/n)^2}{2(1 - d/n)}x^2 + x + \frac{d}{n} + \sqrt{x^2 - \frac{\lambda^2}{d^2}}$ is monotone and $f(\frac{1}{6}) \leq \frac{1}{16} + \frac{1}{6} + \frac{1}{2} + \frac{1}{6} \leq 1$, we obtain that the bound from Theorem \ref{VerCon} is stronger than that of Theorem \ref{VerConP} in this case.

In the proof we will use the following theorem that helps us in estimating the number of edges between two sets of vertices (see \cite{Kriv06}, Theorem 2.11).

\begin{theorem}
\label{MainL}
Let $A$ and $B$ be two subsets of vertices in an $(n, d, \lambda)$-graph. Then 
$$
\left|e(A, B) - \frac{d}{n}|A||B|\right| \leq \lambda\sqrt{|A||B|\left(1-\frac{|A|}{n}\right)\left(1-\frac{|B|}{n}\right)},
$$
where $e(A, B)$ is the number of edges connecting a vertex from $A$ with a vertex from $B$ (edges connecting vertices in $A \cap B$ are counted twice).
\end{theorem}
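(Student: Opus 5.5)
The plan is the standard spectral argument: write $e(A,B)$ as a bilinear form in the adjacency matrix, separate the contribution of the top eigenvector, and bound the remainder by $\lambda$. Let $M$ be the adjacency matrix of $G$ and $\mathbf{1}_A,\mathbf{1}_B\in\{0,1\}^n$ the indicator vectors of $A$ and $B$. With the convention of the statement (edges inside $A\cap B$ counted twice), we have exactly
$$
e(A,B)=\mathbf{1}_A^{T}M\,\mathbf{1}_B .
$$
Indeed, the right-hand side counts ordered pairs $(a,b)\in A\times B$ with $ab\in E_G$, and an edge with both endpoints in $A\cap B$ contributes two such pairs.

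Since $G$ is $d$-regular, $u_1=\frac{1}{\sqrt n}\mathbf{1}$ is a unit eigenvector with eigenvalue $d=\lambda_1$. Because $M$ is symmetric, we can complete $u_1$ to an orthonormal eigenbasis $u_1,\dots,u_n$ with eigenvalues $\lambda_1,\dots,\lambda_n$. By the definition of $\lambda(G)$ we have $|\lambda_i|\le\lambda$ for $i\ge 2$. Hence the restriction of $M$ to $u_1^{\perp}$ maps $u_1^{\perp}$ into itself with operator norm at most $\lambda$. In particular $|f^{T}Mg|\le\lambda\|f\|\|g\|$ for all $f,g\perp u_1$.

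Next, decompose
$$
\mathbf{1}_A=\frac{|A|}{n}\mathbf{1}+f,\qquad \mathbf{1}_B=\frac{|B|}{n}\mathbf{1}+g,
$$
where $f,g\perp\mathbf{1}$. Expand the bilinear form using $M\mathbf{1}=d\mathbf{1}$ and $\mathbf{1}^{T}f=\mathbf{1}^{T}g=0$. The two cross terms vanish, since $\mathbf{1}^{T}Mg=d\,\mathbf{1}^{T}g=0$ and similarly $f^{T}M\mathbf{1}=0$. This leaves
$$
e(A,B)=\frac{|A||B|}{n^{2}}\,d\,n+f^{T}Mg=\frac{d}{n}|A||B|+f^{T}Mg .
$$
By Pythagoras,
$$
\|f\|^{2}=\|\mathbf{1}_A\|^{2}-\frac{|A|^{2}}{n}=|A|\left(1-\frac{|A|}{n}\right),
$$
and likewise $\|g\|^{2}=|B|\left(1-\frac{|B|}{n}\right)$. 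Combining this with the operator-norm bound on $u_1^{\perp}$ gives exactly the claimed inequality.

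There is no real obstacle here; the argument is short. The only point needing a little care is the justification that $M$ has norm at most $\lambda$ on $\mathbf{1}^{\perp}$. This follows from the spectral theorem together with the choice of $u_1$ as the first basis vector. If $G$ is disconnected, then $\lambda_2=d$, so $\lambda=d$ and the bound on $\mathbf{1}^{\perp}$ still holds.
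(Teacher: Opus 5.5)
Your proof is correct and complete. The identity $e(A,B)=\mathbf{1}_A^{T}M\mathbf{1}_B$, the splitting off of the $\mathbf{1}$-component, and the bound $|f^{T}Mg|\le\lambda\|f\|\|g\|$ on $\mathbf{1}^{\perp}$ give exactly the stated inequality. The paper does not prove this statement itself but cites it from Krivelevich--Sudakov (Theorem 2.11 of \cite{Kriv06}), whose proof is the same expansion of $\mathbf{1}_A,\mathbf{1}_B$ in an orthonormal eigenbasis, so your argument is essentially the standard one.
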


\subsubsection{Proof of Theorem \ref{VerCon}}

{\bf Theorem. }{\it Let $G$ be an $(n, d, \lambda)$-graph with $\frac{(1 + d/n)^2}{2(1 - d/n)}\frac{\lambda^2}{d^2} + \frac{\lambda}{d} + \frac{d}{n} \leq 1$. Then $\kappa(G) \geq (1 - \frac{\lambda^2}{d^2\alpha^2})d$, where $\alpha$ is a (unique) positive solution of the equation $\frac{(1 + d/n)^2}{2(1 - d/n)}x^2 + x + \frac{d}{n} + \sqrt{x^2 - \frac{\lambda^2}{d^2}} = 1$.}

First of all, we may assume that $\alpha d > \lambda$, as in the other case there is nothing to prove.

Assume the contrary, then there exists a set of vertices $S, |S| <  d - \frac{\lambda^2}{d\alpha^2}$, such that if we remove this set, the graph becomes disconnected. Let $U$ be the smallest component in the remaining graph, $W = V_G \setminus (U \cup S)$. Then $|W| \geq \frac{n - d}{2}$. Moreover, $|U| + |S| > d$, so $|U| > \frac{\lambda^2}{d\alpha^2}$.

Since $G[V_G \setminus S]$ is disconnected, $$d|U||W| \leq \lambda\sqrt{|U||W|(n - |U|)(n - |W|)}.$$ From AM-GM we have $(n - |U|)(n - |W|) \leq \frac{(n - |U| + n - |W|)^2}{4} = \frac{(n + |S|)^2}{4} \leq \frac{(n + d)^2}{4}$, so we can further rewrite this inequality as
$$
d|U||W| \leq \lambda\frac{n + d}{2}\sqrt{|U||W|}.
$$
From this we have $|U| \leq \frac{\lambda n}{d} \frac{\lambda}{d} \frac{(n + d)^2}{4n|W|} < \frac{\alpha\lambda n}{d} \frac{(n + d)^2}{2n(n - d)}$.

Let us now bound $e(U)$ with the help of Theorem \ref{MainL}.

\begin{multline*}
e(U) = \frac{1}{2}e(U, U) \leq \frac{d|U|^2}{2n} + \frac{\lambda}{2}|U|\left(1 - \frac{|U|}{n}\right) \leq \\ \leq \frac{d|U|^2}{2n} + \frac{\lambda|U|}{2} < \frac{\alpha\lambda n}{d}\frac{(n + d)^2}{2n(n - d)}\frac{d|U|}{2n} + \frac{\lambda|U|}{2} = \\ = \frac{ \alpha(n + d)^2}{2n(n - d)}\frac{\lambda |U|}{2} + \frac{\lambda|U|}{2} = \frac{\alpha(n + d)^2 + 2n(n - d)}{4n(n - d)}\lambda|U| \leq \\ \leq \frac{\alpha^2(n + d)^2 + 2\alpha n(n - d)}{4n(n - d)}d|U|.
\end{multline*}

Finally, we apply Theorem \ref{MainL} to $e(U, \overline{U}) = e(U, S)$ to get the following inequality.
\begin{multline*}
    e(U, \overline{U}) = e(U, S) \leq \frac{d|U||S|}{n} + \lambda\sqrt{|U||S|} = \left(\frac{|S|}{n} + \frac{\lambda}{d}\sqrt{\frac{|S|}{|U|}}\right)d|U| < \\ 
    < \left(\frac{|S|}{n} + \sqrt{\frac{\alpha^2|S|}{d}}\right)d|U| \leq \left(\frac{d}{n} + \sqrt{\frac{\alpha^2d - \lambda^2/d}{d}}\right)d|U| =
    \\ = \left(\frac{d}{n} + \sqrt{\alpha^2 - \frac{\lambda^2}{d^2}}\right)d|U|.
\end{multline*}

Let us now combine these bounds to obtain
\begin{multline*}
d|U| = 2e(U) + e(U, \overline{U}) < \frac{\alpha^2(n + d)^2 + 2\alpha n(n - d)}{2n(n - d)}d|U| + \left(\frac{d}{n} + \sqrt{\alpha^2 - \frac{\lambda^2}{d^2}}\right)d|U| = \\ = \left(\frac{\alpha^2(n + d)^2}{2n(n - d)} + \alpha + \frac{d}{n} + \sqrt{\alpha^2 - \frac{\lambda^2}{d^2}}\right)d|U| = d|U|,
\end{multline*}

which is obviously a contradiction.

\subsubsection{Application to Johnson graphs}

Let us prove the hamiltonicity of $G(n, r, s)$ graphs for all fixed $r \geq 2s + 1$ and all sufficiently large $n$.

\begin{theorem}
\label{T320}
Let $r \geq 2s + 1$. There exists a constant $N$ such that for all $n > N$ we have $G(n, r, s)$ are hamiltonian.
\end{theorem}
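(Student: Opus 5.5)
The plan is to use the Chvátal--Erdős theorem: a graph with at least three vertices and $\kappa(G) \geq \alpha(G)$ is Hamiltonian. So it suffices to show that for fixed $r \geq 2s+1$ and all large $n$, the vertex connectivity of $G(n,r,s)$ is at least its independence number. I will get the connectivity from Theorem~\ref{VerConP} (or Theorem~\ref{VerCon}) together with part 1 of Theorem~\ref{ThMerge}. The independence number will come from the Hoffman ratio bound, or more directly from the Frankl--Wilson/Frankl--Füredi type bounds for families avoiding one intersection size.

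\textbf{Connectivity.} Here $d = d(G(n,r,s)) = \binom{r}{s}\binom{n-r}{r-s} = \Theta(n^{r-s})$ and $N = \binom{n}{r} = \Theta(n^r)$, so $d \leq N/2$ for large $n$. By part 1 of Theorem~\ref{ThMerge}, $\lambda \sim \frac{s}{r} d$. Since $r \geq 2s+1$, we have $s/r < 1/2$, but this does \emph{not} give $\lambda \leq d/6$ in general, so Theorem~\ref{VerConP} is not directly applicable. Instead I will apply Theorem~\ref{VerCon} with $d/N \to 0$. Its hypothesis becomes asymptotically $\frac12 x_0^2 + x_0 \leq 1$ with $x_0 = s/r \leq \frac{s}{2s+1} < \frac12$. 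This holds, since $\frac18 + \frac12 < 1$.

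The equation $\frac12 x^2 + x + \sqrt{x^2 - x_0^2} = 1$ then has a root $\alpha$ bounded away from $x_0$, and strictly larger than it. Hence $\kappa(G) \geq c\,d$, where $c = 1 - x_0^2/\alpha^2 > 0$ is a constant depending only on $r$ and $s$. Thus $\kappa(G) = \Omega(n^{r-s})$.

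\textbf{Independence number.} An independent set is a family of $r$-subsets of $[n]$ with no two members intersecting in exactly $s$ elements. By the Frankl--Füredi theorem, for $r \geq 2s+1$ and $n$ large such a family has size $O(n^{\max(s,\,r-s-1)}) = O(n^{r-s-1})$, the maximum being $r-s-1$ since $r-s-1 \geq s$. Comparing, $\alpha(G) = O(n^{r-s-1}) = o(n^{r-s}) \leq \kappa(G)$ for all large $n$. Chvátal--Erdős then gives hamiltonicity.

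If I wish to stay within spectral tools, I will use the Hoffman bound $\alpha(G) \leq N\frac{-\lambda_{\min}}{d - \lambda_{\min}}$. This requires showing $|\lambda_{\min}| = O(d/n)$, i.e.\ that the most negative eigenvalue is some $E_{r-s}(i)$ with $i > s$, which is $o(n^{r-s})$ per the proof of Theorem~\ref{ThMerge}. But $N \cdot O(1/n) = O(n^{r-1})$ is far too big, so Hoffman alone fails. I therefore expect to rely on the Frankl--Füredi (or Frankl--Wilson) extremal bound.

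\textbf{Main obstacle.} The hardest step is making the connectivity estimate rigorous. This means checking that the hypothesis of Theorem~\ref{VerCon} holds uniformly for all $s/r \leq s/(2s+1)$, including the extreme case $r = 2s+1$ as $s$ grows, and that $\alpha > \lambda/d$ by a margin so that $\kappa$ is a positive constant fraction of $d$. This comes down to the monotonicity of $f(x)$ and an explicit evaluation at $x = 1/2$.
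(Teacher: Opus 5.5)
Your proposal is correct and follows the paper's proof essentially step for step: Chv\'atal--Erd\H{o}s (Theorem~\ref{CE}), then a lower bound $\kappa\ge c\,d$ from Theorem~\ref{VerCon} using $\lambda=|E_{r-s}(1)|\le d/2$ (Theorem~\ref{ThMerge}, part 1) and $d\le\binom{n}{r}/20$, then the Frankl--F\"uredi bound $\alpha(G)\le\binom{n-s-1}{r-s-1}=O(n^{r-s-1})$ (Theorem~\ref{FF}). The constant-factor margin between the root of $f(x)=1$ and $\lambda/d$, which you flag as the main obstacle, is obtained in the paper simply by checking $f(1.01\lambda/d)<1$, giving $\kappa\ge(1-1.01^{-2})d$; since $r$ and $s$ are fixed, no uniformity as $s$ grows is needed.
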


In the proof of Theorem \ref{T320} we use the following bound on the independence number of $G(n, r, s), r \geq 2s + 1$, which was obtained in \cite{Fur}.

\begin{theorem}
\label{FF}
For $r \geq 2s + 1$ 
$$
\alpha(G(n, r, s)) \leq \binom{n - s - 1}{r - s - 1}.
$$
\end{theorem}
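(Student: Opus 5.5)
This is the Frankl--F\"uredi theorem on families of $r$-sets with no two members meeting in exactly $s$ elements. I would prove it by the $\Delta$-system (sunflower) method, and I expect the argument to work only for $n \geq n_0(r)$. The paper applies the bound only for large $n$ (Theorem \ref{T320}), so this suffices there. An independent set of $G(n,r,s)$ is exactly a family $\mathcal F$ of $r$-subsets of $[n]$ with $|F\cap F'|\neq s$ for all distinct $F,F'\in\mathcal F$. The extremal example is the star of all $r$-sets containing a fixed $(s+1)$-set, which has size $\binom{n-s-1}{r-s-1}$.

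\textbf{Step 1 (homogeneous subfamily).} Apply F\"uredi's structure lemma. It gives a constant $c(r)>0$, a subfamily $\mathcal F^*\subseteq\mathcal F$ with $|\mathcal F^*|\geq c(r)|\mathcal F|$, and a family $\mathcal J\subseteq 2^{[r]}$ closed under intersection, with the following properties. $\mathcal F^*$ is $r$-partite, and for every $F\in\mathcal F^*$ the traces $\{F\cap F' : F'\in\mathcal F^*\}$ are exactly the images of $\mathcal J$ in $F$. Moreover, each such trace is the kernel of a sunflower in $\mathcal F$ with more than $r$ petals.

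\textbf{Step 2 (the pattern avoids size $s$).} Suppose $K\subset F$ is the kernel of a sunflower with at least $r+1$ petals. The petals minus $K$ are pairwise disjoint, so at most $|F\setminus K|\leq r$ of them meet $F\setminus K$. Hence some petal $P$ satisfies $|F\cap P|=|K|$, and so $|K|\neq s$. Therefore no member of $\mathcal J$ has exactly $s$ elements.

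\textbf{Step 3 (rank bound).} Since $\mathcal J$ is intersection-closed, the $(s+1)$-subsets of $[r]$ cannot all fail to lie in a member of $\mathcal J$ other than $[r]$ itself. Using $r\geq 2s+1$, I would combine this with the absence of $s$-sets in $\mathcal J$ to show the following: every $F\in\mathcal F^*$ has a subset $T_F$ of size at most $r-s-1$ not contained in any proper trace. Indeed, the complement of a set containing an $(s+1)$-set forced in all traces has size at most $r-s-1$. The map $F\mapsto T_F$ is then injective, which gives $|\mathcal F|\leq c(r)^{-1}\binom{n}{r-s-1}=O(n^{r-s-1})$.

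\textbf{Step 4 (exact constant; main obstacle).} Steps 1--3 give only the right order of magnitude, and upgrading to the exact value is the main difficulty. I would use a stability argument. Call an $(s+1)$-set \emph{heavy} if it lies in at least $\varepsilon n^{r-s-1}$ members of $\mathcal F$. First I would show that two heavy $(s+1)$-sets must share all their elements. Otherwise one can choose $F\supseteq A$ and $F'\supseteq B$ that are disjoint outside $A\cup B$ with $|F\cap F'|=|A\cap B|$. Adjusting, one obtains an intersection of size exactly $s$ using the many extensions available, which is a contradiction. This leaves a single heavy $(s+1)$-set $A$. Let $\mathcal F_1$ be the members containing $A$ and $\mathcal F_2$ the rest. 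Each $G\in\mathcal F_2$ meets $A$ in at most $s$ points. Each such $G$ then forbids $\Omega(n^{r-s-2})$ sets of the star of $A$: those meeting $G$ in exactly $s-|G\cap A|$ points outside $A$. Apply Step 3 to $\mathcal F_2$ with the refined rank bound of order $n^{r-s-2}$ per ``light'' pattern. Comparing the two counts gives $|\mathcal F_1|+|\mathcal F_2|\leq\binom{n-s-1}{r-s-1}$ for large $n$. Checking that each member of $\mathcal F_2$ kills more star members than it contributes is the delicate counting step; I would try it first via a double-counting over pairs $(G,H)$ with $G\in\mathcal F_2$ and $H$ in the star of $A$, with $|G\cap H|=s$.
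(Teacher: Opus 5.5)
There is a genuine gap, and it starts with the statement itself. The paper does not prove this result; it quotes the Frankl--F\"uredi theorem. Your outline (structure lemma, rank bound, then an exact argument around $(s+1)$-stars) has the shape of their original proof. But as quoted here the statement is false, so no proof of it can work.

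You correctly noticed that it needs $n\ge n_0(r)$; for example, $G(6,4,1)$ has no edges. It also fails in the boundary case $r=2s+1$. Take $s=1$, $r=3$, $4\mid n$, split $[n]$ into blocks of size $4$, and take all four $3$-subsets of every block. Two triples in the same block meet in $2$ points, and triples from different blocks are disjoint. So this is an independent set of $G(n,3,1)$ with $n>n-2=\binom{n-s-1}{r-s-1}$ members, for every such $n$. Frankl--F\"uredi prove the exact bound only for $r\ge 2s+2$ (and large $n$); for $r\le 2s+1$ they only get $O(n^{s})$.

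Your argument breaks for $r=2s+1$ in Step~4. There you tacitly assume that some $(s+1)$-set is heavy and that the rest of $\mathcal F$ has lower order $n^{r-s-2}$. In the block example no pair is heavy at all. For $r=2s+1$ the non-star part can have the same order $n^{s}=n^{r-s-1}$ as the star. The $\Delta$-system method makes it $O(n^{\max(s,r-s-2)})$, which is $o(n^{r-s-1})$ exactly when $r\ge 2s+2$.

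Even for $r\ge 2s+2$, Step~4 uses a false lemma: two heavy $(s+1)$-sets need not coincide. In $G(n,4,1)$ let $A=\{1,2\}$ and $B=\{3,4\}$, split $[n]\setminus[4]$ into halves $X_1,X_2$, and take $\{A\cup P: P\in\binom{X_1}{2}\}\cup\{B\cup Q: Q\in\binom{X_2}{2}\}$. All pairwise intersections have size $0$, $2$ or $3$, and $A$ and $B$ each lie in about $n^2/8$ members. Your ``adjusting'' works only when $|A\cap B|=s$: then most members through $A$ avoid $B\setminus A$ and vice versa, giving intersection exactly $s$. When $|A\cap B|<s$, heaviness gives no way to create the missing $s-|A\cap B|$ common points. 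So you need a way to handle many partial stars at once, which is where the real work of the exact theorem lies. Your final double count is also only a plan: a single missing star member can be ``killed'' by many $G\in\mathcal F_2$, so the count does not close by itself.

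Step~3 is likewise not an argument. There is no ``$(s+1)$-set forced in all traces''; for instance $\mathcal J=\{\emptyset,[r]\}$ is possible. What is needed is Frankl--F\"uredi's rank lemma for intersection-closed $\mathcal J$ with no $s$-element member, which yields $|\mathcal F|=O(n^{\max(s,r-s-1)})$.

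That order bound is all the paper actually uses. In Theorem~\ref{T320} one has $d=\binom{r}{s}\binom{n-r}{r-s}=\Theta(n^{r-s})$, and $O(n^{r-s-1})=o(d)$ for every $r\ge 2s+1$. So Steps~1--3, done properly, suffice there, and Step~4 can be dropped.
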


The other theorem we use is the sufficient condition of hamiltonicity proved by Chv\'atal and Erd\H os (see \cite{ErdChv}).

\begin{theorem}
\label{CE}
Let $G$ be a graph on $n \geq 3$ vertices such that $\kappa(G) \geq \alpha(G)$. Then $G$ contains a Hamilton cycle.
\end{theorem}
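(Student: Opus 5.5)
We prove the Chvátal--Erdős criterion (Theorem \ref{CE}) directly, by the classical longest-cycle argument. Put $k = \kappa(G)$ and assume $\kappa(G) \geq \alpha(G)$. First we dispose of the degenerate cases. If $\alpha(G) = 1$, then $G$ is complete on $n \geq 3$ vertices and is trivially Hamiltonian. Otherwise $k \geq \alpha(G) \geq 2$, so $G$ is $2$-connected. In particular $G$ has minimum degree at least $2$ and therefore contains a cycle.

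Let $C$ be a longest cycle of $G$, with a fixed orientation, and write $x^{+}$ for the successor of $x$ on $C$. Suppose $C$ is not Hamiltonian and pick a vertex $v \notin V(C)$. We want $k$ paths from $v$ to $C$ that are pairwise disjoint except at $v$, whose endpoints on $C$ are distinct, and whose interiors avoid $C$.
\begin{itemize}
\item If $|V(C)| \geq k$, the fan lemma (a consequence of Menger's theorem for $k$-connected graphs) gives such paths. Truncate each path at its first vertex on $C$. Call the endpoints $x_1, \dotsc, x_k \in V(C)$.
\item If $|V(C)| < k$, the fan lemma gives such paths to every vertex of $C$. Then $v$ is joined to two consecutive vertices $x, x^{+}$ of $C$ by internally disjoint paths outside $C$. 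Replacing the edge $xx^{+}$ by this detour through $v$ gives a longer cycle, which is a contradiction.
\end{itemize}

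Now set $y_i = x_i^{+}$. The key step is to show that $\{v, y_1, \dotsc, y_k\}$ is an independent set of $k+1$ distinct vertices.
\begin{itemize}
\item \emph{Distinctness.} If $y_i = x_j$ for some $j$, then $x_i$ and $x_i^{+}$ are both fan endpoints. Routing $x_i \to v \to x_i^{+}$ through the two fan paths replaces the edge $x_ix_i^{+}$ and lengthens $C$. So the $y_i$ are distinct and none of them is an $x_j$. Also $y_i \neq v$, since $y_i \in V(C)$ and $v \notin V(C)$.
\item \emph{$v$ is not adjacent to any $y_i$.} An edge $vy_i$, together with the fan path from $x_i$ to $v$, would again replace the edge $x_iy_i$ by a longer detour.
\item \emph{No two $y_i, y_j$ are adjacent.} Suppose $y_iy_j$ is an edge. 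Form the cycle that starts at $x_i$, goes through its fan path to $v$, then along the fan path to $x_j$, then backwards along $C$ from $x_j$ to $y_i$, then across the edge $y_iy_j$, and finally forwards along $C$ from $y_j$ to $x_i$. This cycle uses every vertex of $C$ and also $v$, so it is longer than $C$.
\end{itemize}
Hence $\alpha(G) \geq k+1 > \kappa(G)$, contradicting the hypothesis. Therefore $C$ is Hamiltonian.

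The only step requiring care is the last one: one must check that the rerouted cycle really covers every vertex of $C$ exactly once. The two $C$-segments $[y_i, x_j]$ (traversed backwards) and $[y_j, x_i]$ partition $V(C)$, because the pairs $x_i, y_i$ and $x_j, y_j$ are consecutive on $C$. They are joined by the edge $y_iy_j$ and by the path through $v$, whose interior lies outside $C$. The other two checks are immediate. The existence of the fan is the only external input, and we take it from Menger's theorem.
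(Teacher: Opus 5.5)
The paper does not prove this theorem; it quotes it from Chvátal and Erdős \cite{ErdChv} and uses it as a tool in the proof of Theorem \ref{T320}. Your argument (a longest cycle $C$, a $\kappa$-fan from an outside vertex $v$, and the independent set $\{v, x_1^{+},\dots,x_k^{+}\}$ of size $\kappa+1$) is exactly their original proof and is correct; the only nit is that the $x_i^{+}$ being pairwise distinct follows from injectivity of the successor map, not from the rerouting step.
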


\begin{proof}[Proof of Theorem \ref{T320}]
In the proof we will use the notation $N_V \coloneqq \binom{n}{r}, d \coloneqq \binom{n - r}{r - s}\binom{r}{s}$.

Let us take such $N$ that for all $n > N$
\begin{enumerate}
    \item $\binom{n - r}{r - s}\binom{r}{s} \leq \frac{1}{20}\binom{n}{r} \Leftrightarrow d \leq \frac{N_V}{20}$;
    \item $\lambda(G(n, r, s)) = |E_{r - s}(1)|$;
    \item $\left(1 - \frac{1}{1.01^2}\right)\binom{n - r}{r - s}\binom{r}{s} \geq \binom{n - s - 1}{r - s - 1} \Rightarrow \left(1 - \frac{1}{1.01^2}\right)d \geq \alpha(G(n, r, s))$.
    \item $r^2 < (r - s)n \Leftrightarrow \frac{s}{n - r} < \frac{r}{n}$.
    \item $sn > r^2 \Leftrightarrow \frac{s}{r} > \frac{r}{n}$.
\end{enumerate}
Then for any $n > N$ we know that $\lambda(G(n, r, s)) = |E_{r - s}(1)| = \frac{|sn -  r^2|}{r(n - r)}d = \left|\frac{s}{r} + \frac{s}{n - r} - \frac{r}{n}\right|d \leq \frac{d}{2}$ (since $\left|\frac{s}{r} + \frac{s}{n - r} - \frac{r}{n}\right| \in \left[0, \frac{s}{r}\right]$). We thus have $$\frac{(1 + d/N_V)^2}{2(1 - d/N_V)}\frac{\lambda^2}{d^2} + \frac{\lambda}{d} + \frac{d}{N_V} \leq \frac{(1 + 1/20)^2}{2(1 - 1/20)}\frac{1}{4} + \frac{1}{2} + \frac{1}{20} \leq 1.$$

We now want to obtain a bound on the positive solution of the equation $f(x) = 1$, where $$
f(x) = \frac{(1 + d/N_V)^2}{2(1 - d/N_V)}x^2 + x + \frac{d}{N_V} + \sqrt{x^2 - \frac{\lambda^2}{d^2}}.
$$

Let us calculate the value $f\left(\frac{1.01\lambda}{d}\right)$:

$$
f\left(\frac{1.01\lambda}{d}\right) = \frac{(1 + d/N_V)^2}{2(1 - d/N_V)}\frac{1.01^2\lambda^2}{d^2} + \frac{1.01\lambda}{d} + \frac{d}{N_V} + \frac{\sqrt{1.01^2 - 1}\lambda}{d} \leq
$$
$$
\leq \frac{(1 + 1/20)^2}{2(1 - 1/20)}\frac{1.01^2}{4} + \frac{1.01}{2} + \frac{1}{20} + \frac{\sqrt{1.01^2 - 1}}{4} < 1.
$$

Since $f$ is monotone, we can get that 
$$
\kappa(G(n, r, s)) \geq \left(1 - \frac{\lambda^2}{d^2\alpha^2}\right)d,
$$
where $\alpha$ is at least $\frac{1.01\lambda}{d}$. This means that 
$$
\kappa(G(n, r, s)) \geq \left(1 - \frac{1}{1.01^2}\right)d,
$$

It remains to use Theorem \ref{FF} to obtain $\kappa(G(n, r, s)) \geq \left(1 - \frac{1}{1.01^2}\right)d \geq \binom{n - s - 1}{r - s - 1} \geq \alpha(G(n, r, s))$ and then apply the Chv\'atal-Erd\"os condition of hamiltonicity.

\end{proof}

\subsection{The threshold for the appearance of a Hamilton cycle in Johnson graphs}

Theorem \ref{Th2} also allows us to prove the following result on the threshold for the appearance of a Hamilton cycle in $G(4s, 2s, s)$.

\begin{theorem}
\label{HamG} \text{}
    Denote $N := V_{G(4s,2s,s)} = \binom{4s}{2s}, d := d(G(n, r, s)) = \binom{2s}{s}^2$.
\begin{enumerate}
    \item Let $p = \frac{\ln N + \ln\ln N + w(N)}{d}, w(N) \to \infty$. Then w.h.p. $G_p(4s, 2s, s)$ is hamiltonian. 
    \item Let $p = \frac{\ln N + \ln\ln N - w(N)}{d}, w(N) \to \infty$. Then w.h.p. $G_p(4s, 2s, s)$ is not hamiltonian.
\end{enumerate}
\end{theorem}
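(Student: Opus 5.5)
The plan is to derive Theorem \ref{HamG} from a known general hitting-time/threshold result for Hamiltonicity of random subgraphs of pseudo-random graphs, with Theorem \ref{Th2} supplying the spectral hypothesis. The natural tool is the result on random subgraphs of $(n,d,\lambda)$-graphs (in the spirit of Frieze–Krivelevich and later Krivelevich–Lee–Sudakov). Such results say, roughly, that if $G$ is $d$-regular on $N$ vertices with $\lambda \le d/(\log N)^{C}$ (or $\lambda = o(d^{5/2}/(N\log N)^{3/2})$, depending on the version), then $G(p)$ has the same Hamiltonicity threshold as the minimum-degree-$2$ threshold, namely $p = \frac{\ln N + \ln\ln N \pm w(N)}{d}$.

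The first step is to check the spectral hypothesis. Here $N = \binom{4s}{2s} \approx 2^{4s}/\sqrt{2\pi s}$, so $\log N = \Theta(s)$. By Theorem \ref{Th2}, $\lambda = d/(4s-2) = \Theta(d/\log N)$ for all large $s$. I will therefore look for a version of the general theorem requiring only $\lambda = o(d)$, or perhaps $\lambda \le \varepsilon d/\log N$. If the available theorem needs $\lambda \le d/\log^{1+\delta} N$, I would instead use a variant tailored to this regime, which I expect to be the main obstacle. The fallback is to reprove the upper bound directly by the standard Pósa rotation–extension route. The expansion estimates $e(A,B) \approx \frac{d}{N}|A||B|$ from Theorem \ref{MainL} hold with error $\lambda\sqrt{|A||B|}$. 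With $\lambda/d = \Theta(1/s)$, this yields expansion of sets of size up to $cN/\log N$ in $G(p)$ once small-degree vertices are handled. Boosters can then be counted using the fact that the number of edges of $G$ between two sets of linear size is concentrated, again by Theorem \ref{MainL}.

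For part 2 (the lower bound) no spectral information is needed beyond regularity. A vertex is isolated or has degree $1$ in $G(p)$ with probability about $(1+dp)(1-p)^d \approx (\ln N)e^{-\ln N-\ln\ln N+w} = e^{w}/N$. So the expected number of vertices of degree at most $1$ is $e^{w(N)} \to \infty$. A second moment computation then shows w.h.p. some vertex has degree $\le 1$, and hence $G(p)$ is not Hamiltonian. The only dependency is for adjacent pairs, which sharing one edge changes the joint probabilities by a $1+O(p\,dp)$ factor, negligible since $d/N \to 0$.

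For part 1 I would argue in three steps. (i) W.h.p. $\delta(G(p)) \ge 2$, by the first moment, which is the mirror computation. (ii) Vertices of small degree (below $\varepsilon\ln N$) are few and pairwise far apart, which follows from a union bound using $d\cdot d^{2}/N$-type counts of short paths in $G$. (iii) $G(p)$ is a good expander, and sprinkling a second random round of edges closes the path into a Hamilton cycle via boosters. The tail bounds for step (iii) come from Lemma \ref{Hf} together with Theorem \ref{MainL}.
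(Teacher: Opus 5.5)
Your part 2 matches the paper's argument: a first moment of order $e^{w(N)}\to\infty$, and a second moment in which only the $Nd$ adjacent ordered pairs are correlated. Your overall plan for part 1 is also the paper's plan: reduce Hamiltonicity to the minimum-degree-$2$ threshold through a hitting-time theorem for pseudo-random host graphs, with Theorem \ref{Th2} as the spectral input.

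The gap is that you never obtain such a theorem that actually applies, and the fallback does not replace it.
\begin{itemize}
\item The hypotheses you quote fail here. $\lambda=d/(4s-2)=\Theta(d/\log N)$, which is not $\le d/\log^{C}N$ for any $C>1$.
\item The sparse Frieze--Krivelevich condition fails too, since $d^{5/2}/(N\log N)^{3/2}=d\,(d/N)^{3/2}(\log N)^{-3/2}=\Theta(d\,s^{-9/4})\ll\lambda$.
\item In your fallback, all the difficulty sits in step (iii), and it is not addressed. Theorem \ref{MainL} controls $e(A,B)$ for two fixed sets. The boosters of a longest path, however, are the pairs $\{y,z\}$ with $z\in\mathrm{End}(y)$, and the end-set depends on $y$. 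So edge-distribution estimates do not by themselves show that $\Omega(Nd)$ boosters are edges of $G$. That is what you need, since only edges of $G$ can be sprinkled.
\item At the sharp window $p=(\ln N+\ln\ln N+w)/d$ you can reserve only about $w(N)/d$ of $p$ for sprinkling, and $w(N)$ may grow arbitrarily slowly. Setting aside a constant fraction of $p$ would destroy $\delta\ge 2$.
\end{itemize}
Handling these points is precisely the content of the theorem you were hoping to cite. As written, part 1 is not proved.

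The missing observation is that $G(4s,2s,s)$ is dense. We have $d\sim 2^{4s}/(\pi s)$ and $N\sim 2^{4s}/\sqrt{2\pi s}$, so $d/N\sim\sqrt{2/(\pi s)}=\Theta(1/\sqrt{\log N})$. In this regime the dense-case hitting-time theorem of Frieze--Krivelevich (Theorem \ref{HamDense}) needs only two conditions, and both hold:
\begin{itemize}
\item $d\ge CN\log\log N/\log N$ holds because $N\log\log N/\log N=\Theta(N\log s/s)=o(N/\sqrt{s})$.
\item $\lambda\le cd^2/N$ holds because $d^2/N=\Theta(d/\sqrt{s})$ while $\lambda=\Theta(d/s)$.
\end{itemize}
Hence w.h.p. $\tau_{A_2}=\tau_2$ in the random graph process on $G(4s,2s,s)$. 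Passing from the process to $G_p$ by monotonicity reduces part 1 to showing $\delta(G_p)\ge 2$ w.h.p., which is exactly your step (i). Your step (ii) and the expansion/booster machinery of step (iii) are then unnecessary.
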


In the proof we will use a theorem from \cite{Kr}. But first let us introduce some definitions.

\begin{definition}
$\{G_t\}$ is called a random graph process with base graph $G$ if $G_0$ is a graph with $V_{G_0} = V_G, E_{G_0} = \varnothing$ and $G_t$ is obtained from $G_{t - 1}$ by adding an edge from $E_{G} \setminus E_{G_{t - 1}}$ to $G_{t}$. The edge is chosen equiprobably.
\end{definition}

\begin{definition}
Let $\{G_t\}$ be a random graph process with base graph $G$. Then $\tau_{A_{2k}}$ is equal to minimum $t$ such that $G_t$ contains $k$ edge-disjoint Hamilton cycles.
\end{definition}

\begin{definition}
Let $\{G_t\}$ be a random graph process with base graph $G$. Then $\tau_{2k}$ is equal to minimum $t$ such that the degrees of all vertices in $G_t$ are at least $2k$.
\end{definition}

\begin{theorem}[\cite{Kr}]
\label{HamDense}
Let $C = 10^8,c = \frac{1}{400}, d = d(n) \geq \frac{Cn\log\log n}{\log n}, \lambda = \lambda(n) \leq \frac{cd^2}{n}, k \in \mathbb{N}$, let also $G$ be an $(n, d, \lambda)$-graph, and $\{G_t\}$ be a random graph process with base graph $G$. Then w.h.p. $\tau_{2k}(\{G_t\}) = \tau_{A_{2k}}(\{G_t\})$.
\end{theorem}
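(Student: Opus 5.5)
Since Theorem \ref{HamDense} is quoted from \cite{Kr}, the plan below is how I would prove it from scratch. I would follow the classical hitting-time strategy of Bollobás and of Krivelevich--Lee--Sudakov, with two changes: the complete graph is replaced by the base $(n,d,\lambda)$-graph $G$, and Theorem \ref{MainL} is used wherever the classical proof counts edges between large sets. Since $\tau_{2k}\le\tau_{A_{2k}}$ trivially, it suffices to show that w.h.p. $G_{\tau_{2k}}$ contains $k$ edge-disjoint Hamilton cycles.

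\textbf{Step 1: locate $\tau_{2k}$ and transfer to $G(p)$.} I would first show that w.h.p. $\tau_{2k}$ lies in the window $t\in\bigl[\tfrac{n}{2}(\ln n+(2k-1)\ln\ln n-\omega),\ \tfrac{n}{2}(\ln n+(2k-1)\ln\ln n+\omega)\bigr]$. Because $G$ is $d$-regular, the degree of each vertex in $G(p)$ is $\mathrm{Bin}(d,p)$. First and second moment computations for the number of vertices of degree $<2k$ give the window, and the standard monotone coupling between $G_t$ and $G(p)$ with $p=t/e(G)$ moves all remaining work into $G(p)$ for $p$ in this window.

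\textbf{Step 2: typical properties of $G_{\tau_{2k}}$.} Let $\mathrm{SMALL}$ be the set of vertices of degree at most $\delta\ln n$ in $G(p)$. I would prove the following w.h.p.
\begin{enumerate}
\item $|\mathrm{SMALL}|\le n^{1-\varepsilon}$, and no two $\mathrm{SMALL}$ vertices are within distance $4$ of each other. This is a union bound over paths of length at most $4$ in $G$. There are at most $nd^4$ such paths, and the assumption $d\ge Cn\log\log n/\log n$ makes $p$ large enough that this union bound closes.
\item \emph{Small sets expand:} every $X$ with $|X|\le n/\ln^{1/2}n$ satisfies $|N_{G(p)}(X)|\ge 2k|X|$, and even $|N(X)|\ge c'|X|\ln n$ when $X\cap\mathrm{SMALL}=\varnothing$.
\item \emph{Large sets are joined:} any two disjoint sets $A,B$ with $|A|,|B|\ge n/\ln^{1/2}n$ span at least $\tfrac12\tfrac{d}{n}|A||B|$ edges of $G$. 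This follows from Theorem \ref{MainL} together with $\lambda\le cd^2/n$. A Chernoff bound over all such pairs then shows they are also joined in $G(p)$.
\end{enumerate}
From (ii) and (iii) I would extract a sparse spanning subgraph $\Gamma_0\subseteq G_{\tau_{2k}}$ that is connected and expanding. It should use only $O(n)$ edges, contain all edges at $\mathrm{SMALL}$ vertices, and leave a large random reserve of edges of $G_{\tau_{2k}}$ untouched. The reserve is obtained by splitting the edges of $G(p)$ randomly into two rounds.

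\textbf{Step 3: Pósa rotations and boosters, repeated $k$ times.} I would build the cycles one at a time. After $j<k$ edge-disjoint Hamilton cycles have been removed, degrees have dropped by exactly $2j$, and every vertex of $\mathrm{SMALL}$ still has degree at least $2(k-j)$. Because $\mathrm{SMALL}$ vertices are far apart, properties (ii) and (iii) survive with slightly worse constants. Pósa's lemma then gives, for a longest path $P$, a set $\mathrm{End}(P)$ of $\Omega(n)$ endpoints, each of which has its own set of $\Omega(n)$ companion endpoints. By Theorem \ref{MainL} and $\lambda\le cd^2/n$, the pairs (endpoint, companion) include $\Omega(dn)$ edges of $G$; these edges are \emph{boosters}, since adding any one of them lengthens the path or closes a Hamilton cycle. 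A reserve edge, being a uniform random edge of $G$, is a booster with probability $\Omega(1)$. At most $n$ boosting rounds are needed per cycle, $kn$ in total, which is far fewer than the $\Theta(n\ln n)$ reserve edges, so a Chernoff bound finishes the argument.

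\textbf{Main obstacle.} I expect Step 3 to be the hardest part. Unlike the complete graph, boosters must be edges of the sparse base graph $G$, so the linear-size endpoint sets must span many $G$-edges. This is precisely where $\lambda\le cd^2/n$ enters: $\Omega(n)$-sized sets are forced to contain $\Omega(dn)$ edges only when the error term $\lambda\sqrt{|A||B|}$ in Theorem \ref{MainL} is much smaller than $d|A||B|/n$. The constant $c=1/400$ should come out of this comparison. The other delicate point is that the reserve edges must not be correlated with the degree conditions at $\mathrm{SMALL}$ vertices. To handle this I would expose all edges at $\mathrm{SMALL}$ vertices first, put them into $\Gamma_0$ deterministically, and draw boosters only from edges that avoid $\mathrm{SMALL}$ vertices.
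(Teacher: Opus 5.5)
The paper has no proof of Theorem~\ref{HamDense}. It is quoted from \cite{Kr} and used as a black box in the proof of Theorem~\ref{HamG}, so your sketch has to stand on its own. Your template is the standard Bollob\'as--Frieze one: hitting-time window, far-apart low-degree vertices, expansion, P\'osa rotations with reserve boosters, repeated $k$ times. Two steps fail as written.

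\textbf{First, Step~2 works at the wrong scale.} Theorem~\ref{MainL} forces $e_G(A,B)>0$ only once $\sqrt{|A||B|}$ exceeds roughly $\lambda n/d$. Under $\lambda\le cd^2/n$ this threshold can be as large as $cd$. So (iii) for sets of size $n/\ln^{1/2}n$ does not follow when $d\gg n/\sqrt{\ln n}$, and it is in fact false. Take $K_n$ minus a $K_{m,m}$-factor with $m\approx n/\sqrt{\ln n}$ and $2m\mid n$. This is an $(n,d,\lambda)$-graph with $d=n-1-m$ and $\lambda=m+1\le cd^2/n$ for large $n$. Yet the two sides of each removed $K_{m,m}$ have no edges between them.

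The consequences are as follows.
\begin{enumerate}
\item The split between small and large sets must be made at size $\Theta(d)$, so (ii) must be proved for all sets up to that size, which may be linear in $n$. (Your clause $|N(X)|\ge c'|X|\ln n$ for all $|X|\le n/\ln^{1/2}n$ is impossible anyway once $|X|>n/(c'\ln n)$.)
\item In that range the bound $e_G(S)\le\frac{d}{2n}|S|^2+\frac{\lambda}{2}|S|$ is too weak. Since $p\lambda$ can be of order $\ln n$, the $\lambda$-term alone does not survive a union bound over sets. You need something like $p\binom{|S|}{2}\le\frac{\varepsilon}{2}|S|\ln n$ for $|S|\le\varepsilon d$.
\item Transferring the joining of $\Theta(d)$-sized sets to $G(p)$, with $p\approx\ln n/d$, means beating $\binom{n}{\varepsilon d}^2$ by $\exp\bigl(-\Theta(\varepsilon^2d^2\ln n/n)\bigr)$. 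This needs $d\gtrsim\varepsilon^{-1}n\ln\ln n/\ln n$, which is the kind of estimate where the hypothesis $d\ge Cn\log\log n/\log n$ does real work. Your plan attributes that hypothesis to (i) instead, but the path-counting bound there holds for every $d$. So the hypothesis is never genuinely used.
\end{enumerate}

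\textbf{Second, the booster count in Step~3 does not follow from Theorem~\ref{MainL}.} The companion set $B_a$ depends on the endpoint $a$. The number of $G$-boosters is therefore $\sum_a\deg_G(a,B_a)$, a union of stars with varying targets, not $e_G(A,B)$ for two fixed sets. Set sizes plus a spectral gap cannot bound this sum from below. If $d\le n/2$, a family with $|B_a|\ge n/4$ and $B_a\cap N_G(a)=\varnothing$ for every $a$ is consistent with all the information you use. Then there are no $G$-boosters at all, and no reserve edge ever helps. This is exactly what separates a general base graph from $K_n$, where every pair is a potential booster. A proof must exploit how the sets $B_a$ arise from rotations in $\Gamma\subseteq G$, or use a different mechanism for closing cycles. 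Your proposal supplies neither, so the step you correctly single out as the main obstacle is left unresolved.
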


\subsubsection{Proof of Theorem \ref{HamG}}

{\bf Theorem. } {\it
Denote $N := V_{G(4s,2s,s)} = \binom{4s}{2s}, d := d(G(n, r, s)) = \binom{2s}{s}^2$.
\begin{enumerate}
    \item Let $p = \frac{\ln N + \ln\ln N + w(N)}{d}, w(N) \to \infty$. Then w.h.p. $G_p(4s, 2s, s)$ is hamiltonian. 
    \item Let $p = \frac{\ln N + \ln\ln N - w(N)}{d}, w(N) \to \infty$. Then w.h.p. $G_p(4s, 2s, s)$ is not hamiltonian.
\end{enumerate}}

First of all notice that $$d = d(G(4s, 2s, s)) = \binom{2s}{s}^2 \sim \frac{2^{4s}}{\pi s},$$ $$N = V(G(4s, 2s, s)) = \binom{4s}{2s} \sim \frac{2^{4s}}{\sqrt{2\pi s}}.$$ We thus get that $\frac{CN\log\log N}{\log N} \sim \frac{C2^{4s}\log s}{\sqrt{2\pi s}4s} \ll \frac{2^{4s}}{\pi s} \sim d$. The inequality $d \geq \frac{CN\log\log N}{\log N}$ is then satisfied for all big enough $s$. Let us now use theorem \ref{Th2} to show that $\lambda(G(4s, 2s, s)) = o(\frac{d^2}{N})$. Indeed, $\frac{d^2}{N} \sim \frac{C_02^{4s}}{s^{3/2}} = \frac{C_1d}{\sqrt{s}} = \omega(\frac{d}{4s}) = \omega(\lambda(G(4s, 2s, s)))$. We have thus shown that all conditions of Theorem \ref{HamDense} are satisfied. That means that for a random graph process with base graph $G(4s, 2s, s)$ the time of the appearance of a Hamilton cycle w.h.p. coincides with the time when the minimum degree of $G_t$ becomes equal to 2.

Let us now use the following well-known result. For all monotone properties $\mathcal{A}$ the following two statements, $P(G_t \in \mathcal{A}) \to 1$ and $P(G(t / N) \in \mathcal{A}) \to 1$, are equivalent. It thus remains to prove that for $$p_0 = \frac{\log N + \log\log N - w(N)}{d}, w(N) \to \infty$$ w.h.p. there exists a vertex in $G_{p_0}(4s, 2s, s)$ of degree 0 or 1, while for $$p_1 = \frac{\log N + \log\log N + w(N)}{d}, w(N) \to \infty$$ w.h.p. there is no such vertex in $G_{p_1}(4s, 2s, s)$.

Let us move to the proof of these statements. Let us first count $$e_p = \mathbf{E}(\{\text{number of vertices of degree} \leq 1 \text{ in } G_p(4s, 2s, s)\}).$$ 
$$
e_p = N((1-p)^d + d(1 - p)^{d-1}p) = N(1-p)^{d-1}(1 + (d - 1)p).
$$
Let us now substitute $p$ with $p_0$:
\begin{multline*}
e_{p_0} = N\left(1 - \frac{\log N + \log\log N - w(N)}{d}\right)^{d-1}\left(1 + (d-1)\frac{\log N + \log\log N - w(N)}{d}\right) \geq \\
\geq N\log N\left(1 - \frac{\log N + \log\log N - w(N)}{d}\right)^{d-1} =  \\
= \exp\left\{\log{N} + \log\log N + (d-1)\log\left(1 - \frac{\log N + \log\log N - w(N)}{d}\right)\right\} = \\
= \exp\left\{\log{N} + \log\log N - d\frac{\log N + \log\log N - w(N)}{d} - O\left(\frac{(\log N + \log\log N - w(N))^2}{d}\right)\right\} = \\
= \exp\{w(N) - o(1)\} \to \infty.
\end{multline*}
Similarly, for $p_1$ we have
\begin{multline*}
e_{p_1} = N\left(1 - \frac{\log N + \log\log N + w(N)}{d}\right)^{d-1}\left(1 + (d-1)\frac{\log N + \log\log N + w(N)}{d}\right) \geq \\
\geq N\log N\left(1 - \frac{\log N + \log\log N + w(N)}{d}\right)^{d-1} =  \\
= \exp\left\{\log{N} + \log\log N + (d-1)\log\left(1 - \frac{\log N + \log\log N + w(N)}{d}\right)\right\} = \\
= \exp\left\{\log{N} + \log\log N - d\frac{\log N + \log\log N + w(N)}{d} + O\left(\frac{(\log N + \log\log N + w(N))^2}{d}\right)\right\} = \\
= \exp\{-w(N) + o(1)\} \to 0.
\end{multline*}

So for $G_{p_1}(4s, 2s, s)$ the expected number of vertices with degree 1 or 0 tends to 0. This means that $\mathbf{P}(\exists v \in G_{p_1}(4s, 2s, s) : \deg(v) \geq 2) \leq e_{p_1} \to 0$. For the second part of the proof let us calculate the second factorial moment $e_2$ of the number of vertices with degree 1 or 0 in $G_{p_0}(4s, 2s, s)$. Trivially, 
$$
e_2 = \sum\limits_{u, v \in G_{p_0}(4s, 2s, s), u \neq v}{P(u \text{ and } v \text{ both have degree } \leq 1)}.
$$

The value of each term of the sum depends on the existence of an edge between the respective vertices in $G(4s, 2s, s)$. If $u$ and $v$ are not connected in $G(4s, 2s, s)$, the probability is equal to the square of the probability $P(v \text{ has degree } \leq 1)$, so the corresponding term is equal to $(1 - p_0)^{2(d-1)}(1 + (d - 1)p_0)^2$. It remains to bound the probability in case there is an edge between $u$ and $v$ in $G(4s, 2s, s)$. In this case the probability is equal to
\begin{multline*}
(1 - p_0)((1 - p_0)^{d-1} + (d-1)p_0(1 - p_0)^{d-2})^{2} + p_0(1-p_0)^{2(d-1)} = \\ =
(1-p_0)^{2(d-1) - 1}(1 + (d - 2)p_0)^2 + p_0(1 - p_0)^{2(d-1)} = \\ =
(1 - p_0)^{2(d-1)}\left(p_0 + \frac{(1 + (d-2)p_0)^2}{1 - p_0}\right).
\end{multline*}
Let us prove that this probability is $(1 - p_0)^{2(d-1)}(1 + (d - 1)p_0)^2(1 + o(1))$:
\begin{equation*}
    \frac{(1 - p_0)^{2(d-1)}\left(p_0 + \frac{(1 + (d-2)p_0)^2}{1 - p_0}\right)}{(1 - p_0)^{2(d-1)}(1 + (d - 1)p_0)^2} = \frac{\left(p_0 + \frac{(1 + (d-2)p_0)^2}{1 - p_0}\right)}{(1 + (d - 1)p_0)^2} = \frac{\left(o(1) + \frac{d^2p_0^2(1 + o(1))}{1 - o(1)}\right)}{d^2p_0^2(1 + o(1))} = 1 + o(1).
\end{equation*}
We have thus proved that every term in $e_2$ is $(1 - p_0)^{2(d-1)}(1 + (d - 1)p_0)^2(1 + o(1))$, and so $e_2 \leq e_{p_0}^2(1 + o(1))$. It remains to apply Chebyshev inequality:
$$
\mathbf{P}(\forall v : \deg v > 1) \leq \frac{e_2 + e_{p_0} - e_{p_0}^2}{e_{p_0}^2} = \frac{o(e_{p_0}^2)}{e_{p_0}^2} = o(1).
$$

\section*{Appendix 1}

\begin{proof}[Proof of Lemma \ref{exp2}]
Consider the following 3 cases:
\begin{enumerate}
    \item $j = 0$ (this means that $i \leq s$). We want to prove that $C_{n, r, s}(i, 0) = \frac{(s - i + 1)(n - 2r + s - i + 1)}{(r - i + 1)(n - r - i + 1)}C_{n, r, s}(i - 1, 0)$. Using $C_{n, r, s}(i, 0) = \binom{r - i}{r - s}\binom{n - r - i}{r - s}$ we obtain 
    \begin{equation*}
        \binom{r - i}{r - s}\binom{n - r - i}{r - s} = \frac{(s - i + 1)(n - 2r + s - i + 1)}{(r - i + 1)(n - r - i + 1)}\binom{r - i + 1}{r - s}\binom{n - r - i + 1}{r - s},
    \end{equation*}
    which is obviously true.
    
    \item $j = i$ (this means that $i \leq r - s$).  We want to prove that $C_{n, r, s}(i, i) = \frac{(r - s - i + 1)^2}{(r - i + 1)(n - r - i + 1)}C_{n, r, s}(i - 1, i - 1)$. Using $C_{n, r, s}(i, i) = \binom{r - i}{r - s - i}\binom{n - r - i}{r - s - i}$ we obtain
    $$
        \binom{r - i}{r - s - i}\binom{n - r - i}{r - s - i} = \frac{(r - s - i + 1)^2}{(r - i + 1)(n - r - i + 1)}\binom{r - i + 1}{r - s - i + 1}\binom{n - r - i + 1}{r - s - i + 1},
    $$ which is obviously true.
    
    \item $0 < j < i$, $j \geq i - s$. 
    \begin{multline*}
        \frac{(r - s - j + 1)^2}{(r - i + 1)(n - r - i + 1)}C_{n, r, s}(i - 1, j - 1) + \\ + \frac{(s - i + j + 1)(n - 2r + s + j - i + 1)}{(r - i + 1)(n - r - i + 1)}C_{n, r, s}(i - 1, j) =
        \\
        = \frac{(r - s - j + 1)^2}{(r - i + 1)(n - r - i + 1)}\binom{i - 1}{j - 1}\binom{r - i + 1}{r - s - j + 1}\binom{n - r - i + 1}{r - s - j + 1} + \\ + \frac{(s - i + j + 1)(n - 2r + s + j - i + 1)}{(r - i + 1)(n - r - i + 1)}\binom{i - 1}{j}\binom{r - i + 1}{r - s - j}\binom{n - r - i + 1}{r - s - j} =
        \\ 
        = \binom{i - 1}{j - 1}\binom{r - i}{r - s - j}\binom{n - r - i}{r - s - j} + \binom{i - 1}{j}\binom{r - i}{r - s - j}\binom{n - r - i}{r - s - j} =\\= \binom{i}{j}\binom{r - i}{r - s - j}\binom{n - r - i}{r - s - j} = C_{n, r, s}(i, j).
    \end{multline*}
\end{enumerate}
\end{proof}

\section*{Appendix 2}

Here we give the exact values of $E_{s}(4), E_{s}(6), E_{s}(8)$ and $E_{s}(10)$ for the symmetric case ($n = 2r = 4s$).

\begin{enumerate}
    \item 
    \begin{multline*}
        E_{s}(4) = \binom{2s - 4}{s}^2 - 4\binom{2s - 4}{s - 1}^2 + 6\binom{2s - 4}{s - 2}^2 - 4\binom{2s - 4}{s - 3}^2 + \binom{2s - 4}{s - 4}^2 = \\ = \left(6 - 8\frac{(s - 2)^2}{(s - 1)^2} + 2\frac{(s - 3)^2(s - 2)^2}{s^2(s - 1)^2} \right)\binom{2s - 4}{s - 2}^2 = \\ = \left(\frac{6s^2(s - 1)^2 - 8(s-2)^2s^2 + 2(s - 3)^2(s - 2)^2}{s^2(s - 1)^2} \right)\binom{2s - 4}{s - 2}^2 = \\ = \left(\frac{48s^2 - 120 s + 72}{s^2(s - 1)^2} \right)\binom{2s - 4}{s - 2}^2 \sim \frac{3}{16s^2}\frac{2^{4s}}{\pi s} = \frac{3}{16s^2}E_{s}(0).
    \end{multline*}
    
    \item
    \begin{multline*}
    E_{s}(6) = \binom{2s - 6}{s}^2 - 6\binom{2s - 6}{s - 1}^2 + 15\binom{2s - 6}{s - 2}^2 - 20\binom{2s - 6}{s - 3}^2 + \\ + 15\binom{2s - 6}{s - 4}^2 - 6\binom{2s - 6}{s - 5}^2 + \binom{2s - 6}{s - 6}^2 = \\ = \left(2\frac{(s - 3)^2(s - 4)^2(s - 5)^2}{s^2(s - 1)^2(s - 2)^2} - 12\frac{(s - 3)^2(s - 4)^2}{(s - 1)^2(s - 2)^2} + 30\frac{(s - 3)^2}{(s - 2)^2} - 20\right)\binom{2s - 6}{s - 3}^2 = \\ = 
    \left(\frac{-960s^3 + 5760s^2 - 11280s + 7200}{s^2(s - 1)^2(s - 2)^2}\right)\binom{2s - 6}{s - 3}^2 \sim \\ \sim -\frac{15}{64s^3}\frac{2^{4s}}{\pi s} = -\frac{15}{64s^3}E_{s}(0).
    \end{multline*}
    
    \item
    \begin{multline*}
    E_{s}(8) = \binom{2s - 8}{s}^2 - 8\binom{2s - 8}{s - 1}^2 + 28\binom{2s - 8}{s - 2}^2 - 56\binom{2s - 8}{s - 3}^2 + 70\binom{2s - 8}{s - 4}^2 - \\ - 56\binom{2s - 8}{s - 5}^2 + 28\binom{2s - 8}{s - 6}^2 - 8\binom{2s - 8}{s - 7}^2 + \binom{2s - 8}{s - 8}^2 = \\ = \left(2\frac{(s - 4)^2(s - 5)^2(s - 6)^2(s - 7)^2}{s^2(s - 1)^2(s - 2)^2(s - 3)^2} - 16\frac{(s - 4)^2(s - 5)^2(s - 6)^2}{(s - 1)^2(s - 2)^2(s - 3)^2} + \right. \\ \left. + 56\frac{(s - 4)^2(s - 5)^2}{(s - 2)^2(s - 3)^2} - 112\frac{(s - 4)^2}{(s - 3)^2} + 70\right)\binom{2s - 8}{s - 4}^2 = \\ = 
    \left(\frac{26880s^4 - 295680s^3 + 1202880s^2 - 2143680s + 1411200}{s^2(s - 1)^2(s - 2)^2(s - 3)^2}\right)\binom{2s - 8}{s - 4}^2 \sim \\ \sim \frac{105}{2^{8}s^4}\frac{2^{4s}}{\pi s} = \frac{105}{256s^4}E_{s}(0).
    \end{multline*}
\end{enumerate}

\section*{Appendix 3}

Here we prove that for any $i, j$ such that $0 \leq i \leq \frac{r}{10}, 0 \leq j \leq \min\{i, r - s\}$ and $n > 100r > 10000s$ the following inequality holds:
$$
C_{n, r, s}(i, j) \leq \left(\frac12\right)^iC_{n, r, s}(0, 0).
$$

\begin{proof}
Let us prove this inequality by induction. The case of $i = 0$ is obvious, so we only need to prove the induction step. Let us make use of the formula from Lemma \ref{exp2}. 

\begin{multline*}
    C_{n, r, s}(i, j) = \frac{(r-s-j+1)^2}{(r - i + 1)(n - r - i + 1)}C_{n, r, s}(i - 1, j - 1) +\\+ \frac{(s - i + j + 1)(n - 2r + s + j - i + 1)}{(r - i + 1)(n - r - i + 1)}C_{n, r, s}(i - 1, j) \leq \\ \leq \left(\left|\frac{(r-s-j+1)^2}{(r - i + 1)(n - r - i + 1)}\right| + \left|\frac{(s - i + j + 1)(n - 2r + s + j - i + 1)}{(r - i + 1)(n - r - i + 1)}\right|\right)\left(\frac12\right)^{i - 1}C_{n, r, s}(0, 0). 
\end{multline*}

All that remains is to prove that 
$$
\left|\frac{(r-s-j+1)^2}{(r - i + 1)(n - r - i + 1)}\right| + \left|\frac{(s - i + j + 1)(n - 2r + s + j - i + 1)}{(r - i + 1)(n - r - i + 1)}\right| \leq \frac12,
$$
which can be checked by direct calculations.

\end{proof}

\section*{Appendix 4}

Let us use Lemma \ref{exp2} to obtain a new expression for $E_{r - s}(i)$.
\begin{multline*}
    E_{r - s}(i) = \sum\limits_{j = \max\{0, i - s\}}^{i}{(-1)^{j}C_{n, r, s}(i, j)} = 
    \\ 
    = \sum\limits_{j = \max\{0, i - s\}}^{i} (-1)^{j}\left(\frac{(r - s - j + 1)^2}{(r - i + 1)(n - r - i + 1)}C_{n, r, s}(i - 1, j - 1) + \right. \\ \left. + \frac{(s - i + j + 1)(n - 2r + s + j - i + 1)}{(r - i + 1)(n - r - i + 1)}C_{n, r, s}(i - 1, j)\right) =
    \\
    = \sum\limits_{j = \max\{0, i - s\} + 1}^{i} (-1)^{j} \left(\frac{(r - s - j + 1)^2}{(r - i + 1)(n - r - i + 1)} - \right. \\ \left. - \frac{(s - i + j)(n - 2r + s + j - i)}{(r - i + 1)(n - r - i + 1)}\right)C_{n, r, s}(i - 1, j - 1) + \\ +  (-1)^{\max\{0, i - s\}}\frac{(r - s - \max\{0, i - s\} + 1)^2}{(r - i + 1)(n - r - i + 1)}C_{n, r, s}(i - 1, \max\{0, i - s\} - 1).
\end{multline*}

\section*{Appendix 5}

Here we prove that in the case of $n \gg r \gg s, r = O(\sqrt{n}), i \geq 2s$ the following bound holds:
$$
\frac{\binom{n}{r}}{\left(\binom{n}{i} - \binom{n}{i - 1}\right)\binom{r}{s}\binom{n - r}{r - s}} = O\left(\frac{1}{n}\right).
$$

\begin{proof}
Let us rewrite the left part.
\begin{multline*}
	\frac{\binom{n}{r}}{\left(\binom{n}{i} - \binom{n}{i - 1}\right)\binom{r}{s}\binom{n - r}{r - s}} = \frac{i!(n - i + 1)!s!(r - s)!^2(n - 2r + s)!}{r!^2(n - r)!^2(n - 2i + 1)} \leq \\ 
	\leq \frac{(2s)!(n - 2s + 1)!s!(r - s)!^2(n - 2r + s)!}{r!^2(n - r)!^2(n - 2r + 1)} \leq \\ 
	\leq \frac{C}{n - 2r + 1}\exp\{(2s + 1/2)\ln(2s) + (n - 2s + 3/2)\ln(n - 2s + 1) + (s + 1/2)\ln s +\\+ (2r - 2s + 1)\ln(r - s) + (n - 2r + s + 1/2)\ln(n - 2r + s) - (2r + 1)\ln r - (2n - 2r + 1)\ln(n - r)\} = \\ 
    = \frac{C}{n - 2r + 1}\exp\{3s\ln s + (n - 2s + 3/2)\ln n + (2r - 2s + 1)\ln r +\\+ (n - 2r + s + 1/2)\ln(n - 2r + s) - (2r + 1)\ln r - (2n - 2r + 1)\ln(n - r) + O(s)\} = \\
	= \frac{C}{n - 2r + 1}\exp\{3s\ln s - (s - 1)\ln n - 2s\ln r +\\+ (n - 2r + s + 1/2)\ln(1 - (2r - s)/n) - (2n - 2r + 1)\ln(1 - r/n) + O(s)\} = \\
	= \frac{C}{n - 2r + 1}\exp\{3s\ln s - (s - 1)\ln n - 2s\ln r +\\+ (n - 2r + s)\ln(1 - (2r - s)/n) - (2n - 2r)\ln(1 - r/n) + O(s)\}.
\end{multline*}

Let us now use the equality $\ln(1 - x) = -x - x^2/2 + O(x^3)$:
\begin{multline*}
	\frac{\binom{n}{r}}{\left(\binom{n}{i} - \binom{n}{i - 1}\right)\binom{r}{s}\binom{n - r}{r - s}} \leq \\
	\leq \frac{C}{n - 2r + 1}\exp\{3s\ln s - (s - 1)\ln n - 2s\ln r +\\+ (n - 2r + s)\ln(1 - (2r - s)/n) - (2n - 2r)\ln(1 - r/n) + O(s)\} = \\
	= \frac{C}{n - 2r + 1}\exp\{3s\ln s - (s - 1)\ln n - 2s\ln r -\\- (n - (2r - s))((2r - s)/n + (2r - s)^2/2n^2 + O(r^3 / n^3)) + (2n - 2r)(r/n + r^2 / 2n^2 + O(r^3 / n^3)) + O(s)\} = \\
	= \frac{C}{n - 2r + 1}\exp\{3s\ln s - (s - 1)\ln n - 2s\ln r -\\- (2r - s) + (2r - s)^2 / n - (2r - s)^2 / 2n + 2r - 2r^2 / n + r^2 / n + O(s)\} = \\
	= \frac{C}{n - 2r + 1}\exp\{3s\ln s - (s - 1)\ln n - 2s\ln r + r^2 / n + O(s)\} = \\ = \frac{C}{n - 2r + 1}\exp\{3s\ln s - (s - 1)\ln n - 2s\ln r + O(s)\}.
\end{multline*}

Now consider the following two cases.
\begin{enumerate}

\item $s \leq \ln\ln r$. Then we can obtain the bound $\frac{C}{n - 2r + 1}\exp\{3\ln\ln r\ln\ln\ln r - 2\ln r + O(\ln\ln r)\} = O(1/n)$.

\item $s > \ln\ln r$. In this case we can use the fact that $\ln n > 2\ln r - C_1 \geq 2\ln s - C_1$ and thus $\frac{C}{n - 2r + 1}\exp\{3s\ln s - 2(s - 1)\ln s - 2s\ln s + O(s)\} = \frac{C}{n - 2r + 1}\exp\{-(s - 1)\ln s + O(s)\} = O(1/n)$.
\end{enumerate}	
\end{proof}

\section*{Acknowledgements}

I would like to thank A. M. Raigorodskii for the formulation of the problem and the fruitful discussion of the results. I would also like to thank M. E. Zhukovskii for a lot of helpful remarks about the text.

\end{document}